\documentclass{amsart}
\usepackage{amsmath,amssymb}
\usepackage{amsrefs}
\usepackage{paralist}
\usepackage{mathrsfs}
\usepackage[all]{xy}

\date{\today}

\newtheorem{thm}{Theorem}[section]
\numberwithin{equation}{section}

\newtheorem{cor}[thm]{Corollary}
\newtheorem{lemma}[thm]{Lemma}
\newtheorem{prop}[thm]{Proposition}

\theoremstyle{definition}
\newtheorem{definition}[thm]{Definition}

\theoremstyle{remark}
\newtheorem{remark}[thm]{Remark}
\newtheorem{example}[thm]{Example}
\newtheorem{claim}{Claim}
\hyphenation{pre-print}

\def\mathcs{C^{*}}
\newcommand{\cs}{\ensuremath{\mathcs}}

\DeclareMathSymbol{\rtimes}{\mathbin}{AMSb}{"6F}

\def\T{\mathbf{T}}
\def\Z{\mathbf{Z}}

\DeclareMathOperator*{\supp}{supp}
\def\set#1{\{\,#1\,\}}
\def\sset#1{\{#1\}}

\def\restr#1{_{{|#1}}}

\newbox\hidebox
\def\spechide#1{\setbox\hidebox=\hbox{$#1$}
\hbox to\wd\hidebox{$\box\hidebox^\wedge$\hss}}
%
\makeatletter
\def\labelenumi{\textnormal{(\@alph\c@enumi)}}
\def\theenumi{\@alph \c@enumi}
\def\labelenumii{\textnormal{(\@roman\c@enumii)}}
\def\theenumii{\@roman \c@enumii}
\newcount\charno
\def\alphapart#1{\charno=96
\advance\charno by#1\char\charno}

\makeatother
%
\def\<{\langle}
\def\>{\rangle}
\let\ipscriptstyle=\scriptscriptstyle
\def\lipsqueeze{{\mskip -3.0mu}}
\def\ripsqueeze{{\mskip -3.0mu}}
\def\ipcomma{\nobreak\mathrel{,}\nobreak}
\newbox\ipstrutbox
\setbox\ipstrutbox=\hbox{\vrule height8.5pt
width 0pt}
\def\ipstrut{\copy\ipstrutbox}
\def\lip#1<#2,#3>{\mathopen{\relax_{\ipstrut\ipscriptstyle{
#1}}\lipsqueeze
\langle} #2\ipcomma #3 \rangle}
\def\blip#1<#2,#3>{\mathopen{\relax_{\ipstrut
\ipscriptstyle{ #1}}\lipsqueeze\bigl\langle} #2\ipcomma #3 \bigr\rangle}
\def\rip#1<#2,#3>{\langle #2\ipcomma #3
\rangle_{\ripsqueeze\ipstrut\ipscriptstyle{#1}}}
\def\brip#1<#2,#3>{\bigl\langle #2\ipcomma #3
\bigr\rangle_{\ripsqueeze\ipstrut\ipscriptstyle{#1}}}
\def\angsqueeze{\mskip -6mu}
\def\smangsqueeze{\mskip -3.7mu}
\def\trip#1<#2,#3>{\langle\smangsqueeze\langle #2\ipcomma #3
\rangle\smangsqueeze\rangle_{\ripsqueeze\ipstrut\ipscriptstyle{#1}}}
\def\btrip#1<#2,#3>{\bigl\langle\angsqueeze\bigl\langle #2\ipcomma
#3
\bigr\rangle
\angsqueeze\bigr\rangle_{\ripsqueeze\ipstrut\ipscriptstyle{#1}}}
\def\tlip#1<#2,#3>{\mathopen{\relax_{\ipstrut\ipscriptstyle{
#1}}\lipsqueeze \langle\smangsqueeze\langle} #2\ipcomma #3
\rangle\smangsqueeze\rangle}
\def\btlip#1<#2,#3>{\mathopen{\relax_{\ipstrut\ipscriptstyle{
#1}}\lipsqueeze
\bigl\langle\angsqueeze\bigl\langle} #2\ipcomma #3
\bigr\rangle\angsqueeze\bigr\rangle}

\def\ip(#1|#2){(#1\mid #2)}
\def\bip(#1|#2){\bigl(#1 \mid #2\bigr)}
\def\Bip(#1|#2){\Bigl( #1 \bigm| #2 \Bigr)}
%
%
%
\IfFileExists{mathrsfs.sty}{\usepackage{mathrsfs}}
{\let\mathscr\mathcal}

\newcommand\go{G^{(0)}}
\newcommand\ho{H^{(0)}}

\newcommand\lc{\beta}
\newcommand\lhlc{locally Hausdorff, locally compact}
\newcommand\ts{\tilde s}
\newcommand\trr{\tilde r}

\newcommand\xmg{G\backslash X}
\newcommand\ymg{G\backslash Y}
\newcommand\N{\mathbf {N}}
\IfFileExists{mathrsfs.sty}{\usepackage{mathrsfs}}
{\let\mathscr\mathcal} 
\newcommand\cc{\mathscr{C}} 
\newcommand\ndot{\cdot}
\newcommand\F{\mathscr{F}}
                            

\expandafter\ifx\csname BibSpec\endcsname\relax\else
\BibSpec{collection.article}{%
    +{}  {\PrintAuthors}                {author}
    +{,} { \textit}                     {title}
    +{.} { }                            {part}
    +{:} { \textit}                     {subtitle}
    +{,} { \PrintContributions}         {contribution}
    +{,} { \PrintConference}            {conference}
    +{}  {\PrintBook}                   {book}
    +{,} { }                            {booktitle}
    +{,} { }                            {series}
    +{,} { \voltext}                    {volume}
    +{,} { }                            {publisher}
    +{,} { }                            {organization}
    +{,} { }                            {address}
    +{,} { \PrintDateB}                 {date}
    +{,} { pp.~}                        {pages}
    +{,} { }                            {status}
    +{,} { \PrintDOI}                   {doi}
    +{,} { available at \eprint}        {eprint}
    +{}  { \parenthesize}               {language}
    +{}  { \PrintTranslation}           {translation}
    +{;} { \PrintReprint}               {reprint}
    +{.} { }                            {note}
    +{.} {}                             {transition}
}
\BibSpec{article}{%
    +{}  {\PrintAuthors}                {author}
    +{,} { \textit}                     {title}
    +{.} { }                            {part}
    +{:} { \textit}                     {subtitle}
    +{,} { \PrintContributions}         {contribution}
    +{.} { \PrintPartials}              {partial}
    +{,} { }                            {journal}
    +{}  { \textbf}                     {volume}
    +{}  { \PrintDatePV}                {date}
    +{,} { \eprintpages}                {pages}
    +{,} { }                            {status}
    +{,} { \PrintDOI}                   {doi}
    +{,} { available at \eprint}        {eprint}
    +{}  { \parenthesize}               {language}
    +{}  { \PrintTranslation}           {translation}
    +{;} { \PrintReprint}               {reprint}
    +{.} { }                            {note}
    +{.} {}                             {transition}
}
\BibSpec{book}{%
    +{}  {\PrintPrimary}                {transition}
    +{,} { \textit}                     {title}
    +{.} { }                            {part}
    +{:} { \textit}                     {subtitle}
    +{,} { \PrintEdition}               {edition}
    +{}  { \PrintEditorsB}              {editor}
    +{,} { \PrintTranslatorsC}          {translator}
    +{,} { \PrintContributions}         {contribution}
    +{,} { }                            {series}
    +{,} { \voltext}                    {volume}
    +{,} { }                            {publisher}
    +{,} { }                            {organization}
    +{,} { }                            {address}
    +{,} { pp.~}                        {pages}
    +{,} { \PrintDateB}                 {date}
    +{,} { }                            {status}
    +{}  { \parenthesize}               {language}
    +{}  { \PrintTranslation}           {translation}
    +{;} { \PrintReprint}               {reprint}
    +{.} { }                            {note}
    +{.} {}                             {transition}
}
\fi

\usepackage[normalem]{ulem} 
\usepackage{color}
\definecolor{Dgreen}{cmyk}{0.93,0.33,0.92,0.25} 

\emergencystretch=25pt
\allowdisplaybreaks

\begin{document}

\title[Amenability of Groupoids]{Amenability of
  Groupoids Arising from Partial Semigroup Actions and Topological
  Higher Rank Graphs}

\author{Jean N. Renault}
\address{D\'epartment de Math\'ematiques\\ Universit\'e d'Orl\'eans et CNRS (UMR 7349 et FR 2964)\\ 45067
Orl\'eans Cedex 2, FRANCE}
\email{jean.renault@univ-orleans.fr}

\author{Dana P. Williams}
\address{Department of Mathematics \\ Dartmouth College \\ Hanover, NH
03755-3551 USA}

\email{dana.williams@Dartmouth.edu}

\date{January 15, 2015; Revised March 16, 2015}

\thanks{The second author was supported by a grant from the
  Simons Foundation.}  

\subjclass{Primary 22A22, 46L55, 46L05}
\keywords{Groupoids, amenable groupoids, cocycle, semigroup actions,
  higher rank graphs, 
  topological higher rank graphs.}

\begin{abstract}
  We consider the amenability of groupoids $G$ equipped with a
  group valued cocycle $c:G\to Q$ with amenable kernel $c^{-1}(e)$.
  We prove a general result which implies, in particular, that $G$ is
  amenable whenever $Q$ is amenable and if there is countable set $D\subset
  G$ such that $c(G^{u})D=Q$ for all $u\in\go$.

  We show that our result is applicable to groupoids arising from
  partial semigroup actions.  We explore these actions in detail and
  show that these groupoids include those arising from directed
  graphs, higher rank graphs and even topological higher rank graphs.
  We believe our methods yield a nice alternative groupoid approach to
  these important constructions.
\end{abstract}
\maketitle


\section{Introduction}
\label{sec:introduction}

It is often important to establish the amenability of groupoids that
arise in applications.  For example, amenability implies the equality
of the reduced and universal norms on the associated groupoid
algebras.  This is important in the study of the Baum-Connes
conjecture for groupoid \cs-algebras as it is the reduced algebra that
plays the key role, while the universal algebra has the better
functorial properties.  For example, Tu has shown that the \cs-algebra
of an amenable groupoid with a Haar system satisfies the Baum-Connes
conjecture and the UCT \cite{tu:kt99}.  In the classification program,
amenability implies nuclearity which is typically a crucial
hypothesis.

The sort of groupoids we wish to focus on are those arising from the
much studied \cs-algebras associated to higher-rank graphs, and more
recently, to topological higher-rank graphs.  As a specific example,
we recently considered the \cs-algebras of groupoids associated to
$k$-graphs (see \cite{rswy:xx12}).  Such groupoids have a canonical
$\Z^{k}$-valued cocycle $c$, and it is not hard to see that
$c^{-1}(0)$ is amenable.  In some cases, $c$ is not only surjective,
but strongly surjective in that $c(G^{u})=\Z^{k}$ for all
$u\in\go$. Then the amenability of $G$ is a consequence of
\cite{anaren:amenable00}*{Theorem~5.3.14}.  However, there are
interesting examples in which $c$ need not be strongly surjective, and
examples show that the problem of the amenability of $G$ turns out to
be very subtle.  A result of Spielberg
\cite{spi:tams14}*{Proposition~9.3} asserts that if $G$ is \'etale and
if $c:G\to Q$ is a continuous cocycle into a countable amenable group,
then $G$ is amenable whenever $c^{-1}(e_{Q})$ is.  Although this
result is satisfactory for most $k$-graphs, the proof is unsatisfying
in that it circumvents groupoid theory by invoking the nontrivial
result that for \'etale groupoid, $\cs(G)$ is nuclear if and only if
$G$ is amenable \cite{qui:jams96}*{Corollary~2.17}.  In particular,
this result is valid only for \'etale groupoids.

Here we want to prove a general result that subsumes both cocycle
results from \cite{anaren:amenable00} and from \cite{spi:tams14}.
That such a result will have delicate hypotheses is foreshadowed by
the observation that one can have a surjective continuous cocycle $c$
from a groupoid into an amenable group $Q$ such that $c^{-1}(e_{G})$
amenable and still have $G$ fail to be amenable.  For example, let $Q$
be an amenable group with a nonamenable subgroup $N$ (which obviously
is not closed in $Q$).  Let $G$ be the group bundle $G=Q\coprod N$ and
let $c$ be the identity map: $c(\gamma)=\gamma$.

Nevertheless, we obtain a quite general result:
Theorem~\ref{thm-skewprod}.  It implies in particular, that if $c:G\to
Q$ is a continuous cocycle into an amenable group $Q$ with amenable
kernel such that there
is a countable set $D$ so that $c(G^{u})D=Q$ for all $u\in\go$, then
$G$ is amenable.  Even in this form, we recover both results above and
remove the hypothesis that $G$ be \'etale from Spielberg's result.

Although our results apply to topological groupoids with Haar systems,
it is interesting that our proof relies on the notions of Borel
groupoids, Borel amenability and Borel equivalence.  At a crucial
juncture, we are able to show that our groupoid is Borel equivalent to
a Borel amenable groupoid.  Since we also show that Borel equivalence
preserves Borel amenability, we can appeal to the result from
\cite{ren:xx13} which demonstrates that topological amenability is
equivalent to Borel amenabiltiy --- at least for locally compact
groupoids with Haar systems.

Having proved our cocycle result, it is crucial to show how it can be
applied to groupoids which are currently being studied in the
literature.  To do this, we show that our results can be applied to
groupoids arising from (partial) semigroup actions. A key r\^ole is
played by the notion of \emph{directed} action
(Definition~\ref{def-directed}) which gives a partition of the space
into orbits.  We explore these actions in detail.  Then, making
significant use of hard work due to Nica and Yeend, we are able to
show that such groupoids include those arising from directed graphs,
higher rank graphs and even topological higher rank graphs.  We think
our methods using semigroup actions yield a nice alternative groupoid
approach to these important constructions.

To prove our cocycle result, we work with \lhlc\ groupoids which are
always assumed to be second countable and to possess a Haar system.
Note that a second countable, \lhlc\ groupoid is the countable union
of compact Hausdorff sets.  Hence its underlying Borel structure is
standard.  In Section~\ref{sec:borel-amenability} we review the notion
of Borel amenability and some of the basic properties of Borel
groupoids we need in the sequel.  In
Section~\ref{sec:borel-equivalence} we recall the notion of Borel
equivalence and prove that it preserves Borel amenability.  In
Section~\ref{sec:cocycles}, we prove the main amenability result.  In
Section~\ref{sec:semigroup action} we introduce semigroup
actions. When the action is directed, there is a semi-direct product
groupoid. This condition puts into light two classes of semigroups:
Ore semigroups and quasi-lattice ordered semigroups. Under reasonable
hypotheses, our main result applies and the semi-direct product
groupoid is amenable.  In Section~\ref{sec:THRG}, we show how the
groupoid corresponding to a topological higher rank graph (and
therefore to the many subcases of topological higher rank graphs) can
be realized as the semi-direct product groupoid of a directed
semigroup action of $P=\N^{d}$. If fact, we consider $P$-graphs, where
$P$ is an arbitrary semigroup rather than simply $\N^d$ as in the original
definition. The natural assumption is that the semigroup be
quasi-lattice ordered.  If moreover the semigroup is a
subsemigroup of an amenable group and the graph satisfies a properness
condition, then this groupoid is amenable as well. Besides the
amenability problem, the section reveals a tight connection between
higher rank C*-algebras and Wiener-Hopf C*-algebras. In fact, we use
the techniques introduced by Nica in \cite{nic:jot92}; in particular
the Wiener-Hopf groupoid of a quasi-lattice ordered semigroup defined
by Nica appears as a particular case of our general construction for
topological higher rank graphs.

\subsubsection*{Acknowledgments}
\label{sec:acknowledgements}

We are very grateful to an anonymous referee for a thorough reading of
our paper and for pointing out the relevance of the work of Exel
\cite{exe:sf09} and Brownlowe-Sims-Vittadello
\cite{bsv:ijm13}.  In addition the referee pointed out a gap
in the original proof of Theorem~\ref{thm-app-directed-sg-actions},
and suggested a line of attack which resulted in an improved version
of the result.  

\section{Borel Amenability}
\label{sec:borel-amenability}

For the details on Borel groupoids, proper Borel amenability and
proper Borel $G$-spaces, we refer to \cite{anaren:amenable00} and to
\cite{anaren:amenable00}*{\S2.1a} in particular.  Recall that in order
for a groupoid to act on (the left) a 
  space $X$, we require a map $r_{X}:X\to\go$.  If there is no
  ambiguity, we write simply $r$ in place of $r_{X}$ and call it the
  \emph{moment map}.\footnote{The term projection map, anchor map and
  structure map have also been used in the literature.} As in
\cite{anaren:amenable00}, to avoid pathologies we will always assume
that our Borel spaces are analytic Borel spaces. We recall some of the
basics here.  If $X$ and $Y$ are Borel spaces and $\pi:X\to Y$ is
Borel surjection, then a \emph{$\pi$-system} is a family of measures
$m=\sset{m^{y}}_{y\in Y}$ such that each $m^{y}$ is supported in
$\pi^{-1}(y)$ and such that
\begin{equation*}
  y\mapsto \int_{X}f(x)\,d m^{y}(x)
\end{equation*}
is Borel for any nonnegative Borel function $f$ on $X$.  If $G$ is a
Borel groupoid, $X$ and $Y$ are Borel $G$-spaces and $\pi$ is
$G$-invariant, then we have that $m$ is \emph{invariant} if $\gamma
\cdot m^{y}= m^{\gamma\cdot y}$ whenever $s(\gamma)=r(y)$. (By
definition, $\gamma\cdot m^{y}(E)=m^{y}(\gamma^{-1}\cdot E)$.)
\begin{definition}[\cite{anaren:amenable00}*{Definition~2.1.1(b)}]
  \label{def-proper-amen}
  Suppose that $G$ is a Borel groupoid and that $X$ and $Y$ are Borel
  $G$-spaces.  A surjective Borel $G$-map $\pi:X\to Y$ is
  \emph{$G$-properly amenable} if there is a invariant Borel
  $\pi$-system $m=\sset{m^{y}}_{y\in Y}$ of probability measures on
  $X$.  Then we say that $m$ is an \emph{invariant mean for $\pi$}.
\end{definition}

\begin{remark}
  \label{rem-quotient-maps}
  Notice that if $\pi:X\to Y$ is a Borel $G$-map, then the induced map
  $\dot \pi:\xmg\to \ymg$ is Borel with respect to the quotient Borel
  structures.  Suppose that $\pi$ is $G$-properly amenable and that
  $\sset{\lambda^{y}}$ is an invariant mean for $\pi$. Let
  $p:X\to\xmg$ be the quotient map. Then the push forward
  $p_{*}\lambda^{y}$ is a probability measure supported on
 ~$\dot\pi^{-1}(\dot y)$.  By invariance, it depends only on $\dot y$
  and we can denote this measure by~$\dot\lambda^{\dot y}$.  If $b$ is
  a bounded Borel function on $\ymg$, then
  \begin{equation*}
    \int_{\ymg}b(\dot z)\,d\dot\lambda^{\dot y}(\dot z) = \int_{X}
    b(p(x))\,d\lambda^{y}(x). 
  \end{equation*}
  Hence $\sset{\dot\lambda^{\dot y}}_{\dot y\in \ymg}$ is a Borel
  $\dot\pi$-system of probability measures on $\xmg$.
\end{remark}

\begin{definition}[\cite{anaren:amenable00}*{Defintion~2.1.2}]
  \label{def-prop-amen}
  A Borel groupoid $G$ is \emph{proper} if the range map $r:G\to\go$ is
  $G$-properly amenable.  A Borel $G$-space $X$ is \emph{proper} if the
  projection $p:X*G\to X$ is $G$-properly amenable.
\end{definition}
\begin{remark}
  \label{rem-amen-gspace}
  Thus $X$ is a proper $G$-space if and only if there is a family
  $\sset{m^{x}}_{x\in X}$ of probability measures $m^{x}$ on $G$
  supported in $G^{r(x)}$ such that $x\mapsto m^{x}(f)$ is Borel for
  all non-negative Borel functions on $G$ and such that $\gamma\cdot
  m^{x}=m^{\gamma\cdot x}$.
\end{remark}

If $X$ and $Y$ are (left) $G$-spaces, then the fibered product $X*Y$
is a $G$-space with respect to the diagonal action.  If $X$ and $Y$
are Borel $G$-spaces, then we give $X*Y$ the Borel structure as a
subset of $X\times Y$ with Borel structure generated by the Borel
rectangles.  In particular, if $X$ is a $G$-space, then
$X*G=\set{(x,\gamma)\in X\times G:r(x)=r(\gamma)}$ is a $G$-space with
respect to the diagonal action.\footnote{In fact, $X*G$ is a groupoid
  --- sometimes called the transformation groupoid for $G$ acting on
  $X$.  This groupoid is denoted by $X\rtimes G$ in
  \cite{anaren:amenable00}. After identifying its unit space with $X$,
  the range map $(x,\gamma)\mapsto x$ is clearly $G$-equivariant.
  Hence, in Defintion~\ref{def-prop-amen}, we defined a $G$-space $X$
  to be properly amenable exactly when the groupoid $X\rtimes G$ is
  properly amenable.}
\begin{definition}
  \label{def-borel-amen}
  Suppose that $G$ is a Borel groupoid and that $X$ and $Y$ are Borel
  $G$-spaces.  A surjective Borel $G$-map $\pi:X\to Y$ is \emph{$G$-amenable}
  (or simply amenable if there is no ambiguity about $G$) if there is
  a sequence $\sset{m_{n}}_{n\in \N}$ of Borel systems of probability
  measures $y\mapsto m_{n}^{y}$ on $X$ which is approximately
  invariant in the sense that for all $\gamma\in G$, $\| \gamma\cdot
  m_{n}^{y} - m_{n}^{\gamma\cdot y}\|_{1}$ converges to $0$, where
  $\|\cdot\|_{1}$ is the total variation norm.  We say that
  $\sset{m_{n}}_{n\in\N}$ is an \emph{approximate invariant mean} for
  $\pi$.
\end{definition}

The notion of Borel amenability for groupoids was first formalized in
\cite{ren:xx13}*{Definition~2.1} --- however, here we use the
formulation in which conditions (ii)~and (iii) of
\cite{ren:xx13}*{Definition~2.1} have been replaced by (ii$'$).

\begin{definition}
  \label{def-borel-amen-grp-space}
  A Borel groupoid $G$ is (Borel) \emph{amenable} if the range map
  $r:G\to\go$ is $G$-amenable.  A Borel $G$-space $X$ is \emph{amenable} if
  the projection $p:X*G\to X$ is $G$-amenable.
\end{definition}

A key result about Borel amenable maps we need is the
following. 

\begin{lemma}
  \label{lem-key}
  Let $G$ be a Borel groupoid.  If there is a proper Borel $G$-space
  $X$ such that the moment map  $r:X\to\go$ is Borel amenable, then $G$
  is Borel amenable.
\end{lemma}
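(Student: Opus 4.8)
The plan is to combine two amenability data — the properness of the $G$-action on $X$ (giving genuinely invariant probability measures on $G$ along the fibers of $r_G$) with the Borel amenability of the moment map $r_X:X\to\go$ (giving an approximately invariant sequence of probability measures on $X$ along the fibers of $r_X$) — and to produce from them an approximate invariant mean witnessing that $r_G:G\to\go$ is $G$-amenable. Concretely, properness of $X$ supplies a family $\sset{m^{x}}_{x\in X}$ of probability measures $m^{x}$ on $G^{r(x)}$ with $\gamma\cdot m^{x}=m^{\gamma\cdot x}$ (Remark~\ref{rem-amen-gspace}), while Borel amenability of $r_X$ supplies a sequence $\sset{\mu_{n}}$ of Borel systems $u\mapsto\mu_{n}^{u}$ of probability measures on $X$, with each $\mu_n^u$ supported in $r_X^{-1}(u)$, satisfying $\|\gamma\cdot\mu_{n}^{u}-\mu_{n}^{r(\gamma)}\|_{1}\to0$.

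First I would fix $u\in\go$ and integrate the fiberwise measures $m^{x}$ against $\mu_{n}^{u}$ to build candidate measures on $G^{u}$. The natural definition is
\begin{equation*}
  \lambda_{n}^{u}(f)=\int_{X}\Bigl(\int_{G}f(\eta)\,dm^{x}(\eta)\Bigr)\,d\mu_{n}^{u}(x),
\end{equation*}
for nonnegative Borel $f$ on $G$. Since $\mu_{n}^{u}$ is supported where $r_X(x)=u$ and $m^{x}$ is supported on $G^{r(x)}=G^{u}$, each $\lambda_{n}^{u}$ is a probability measure supported on $G^{u}$, and Borel dependence on $u$ follows from the Borel hypotheses on the two systems together with standard measurable-selection/Fubini arguments for $\pi$-systems. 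Thus $\sset{\lambda_{n}}$ is a sequence of Borel systems of probability measures on $G$ along the range map, which is exactly the right shape for an approximate invariant mean for $r_G$.

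Next I would verify approximate invariance. For $\gamma\in G$ with $s(\gamma)=u$ and target $r(\gamma)=v$, I compute $\gamma\cdot\lambda_{n}^{u}$ and compare it to $\lambda_{n}^{v}$. The crucial point is that the \emph{exact} invariance $\gamma\cdot m^{x}=m^{\gamma\cdot x}$ lets the groupoid action be absorbed into the inner integral: pushing $\gamma$ through, $\gamma\cdot\lambda_{n}^{u}$ becomes the integral of $m^{\gamma\cdot x}$ against $\mu_{n}^{u}$, i.e.\ the integral of $m^{y}$ against $\gamma\cdot\mu_{n}^{u}$. Then
\begin{equation*}
  \|\gamma\cdot\lambda_{n}^{u}-\lambda_{n}^{v}\|_{1}
  \le\|\gamma\cdot\mu_{n}^{u}-\mu_{n}^{v}\|_{1},
\end{equation*}
because integrating a fixed kernel of probability measures $\set{m^{y}}$ against two measures on $X$ can only contract (not expand) the total variation distance. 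The right-hand side tends to $0$ by the Borel amenability of $r_X$, so $\|\gamma\cdot\lambda_{n}^{u}-\lambda_{n}^{v}\|_{1}\to0$ for every $\gamma$, which is precisely the approximate invariance required in Definition~\ref{def-borel-amen} for $G$ to be amenable.

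The main obstacle I anticipate is purely measure-theoretic bookkeeping rather than conceptual: one must check that $u\mapsto\lambda_{n}^{u}(f)$ is genuinely Borel (so that $\sset{\lambda_{n}}$ qualifies as a Borel system), which requires the two input families to compose well under integration, and one must justify the interchange $\int m^{\gamma\cdot x}\,d\mu_n^u(x)=\int m^{y}\,d(\gamma\cdot\mu_n^u)(y)$ and the total-variation contraction estimate carefully on analytic Borel spaces. These are routine given the standing assumption that all spaces are analytic, so I expect the proof to be short once the integrated kernel $\lambda_n^u$ is written down; the contraction inequality for the pushforward of a measure through a probability kernel is the one genuinely load-bearing estimate and should be isolated as the key lemma of the argument.
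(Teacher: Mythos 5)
Your proposal is correct and is essentially identical to the paper's own proof: both average the invariant kernel $\sset{m^{x}}$ supplied by properness against the approximate invariant mean $\sset{\mu_{n}^{u}}$ for $r_X$, defining $\lambda_{n}^{u}=\int_{X}m^{x}\,d\mu_{n}^{u}(x)$, and both reduce approximate invariance to the contraction estimate $\|\gamma\cdot\lambda_{n}^{s(\gamma)}-\lambda_{n}^{r(\gamma)}\|_{1}\le\|\gamma\cdot\mu_{n}^{s(\gamma)}-\mu_{n}^{r(\gamma)}\|_{1}$ obtained by absorbing $\gamma$ into the kernel via $\gamma\cdot m^{x}=m^{\gamma\cdot x}$. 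The only difference is presentational: you isolate the kernel-averaging total-variation contraction as a stated key estimate, whereas the paper carries it out inline.
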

\begin{proof}
  Since, by assumption, the map $\pi_{X}:X*G\to X$ is properly
  amenable, there is an invariant system $m=\sset{m^{x}}_{x\in X}$ of
  probability measures for $\pi_{X}$.  We can view each $m^{x}$ as a
  measure on $G^{r_{X}(x)}$ such that $\gamma\cdot
  m^{x}=m^{\gamma\cdot x}$.  Since $r_{X}$ is amenable, there is an
  approximate invariant mean $\sset{\mu_{n}}$.  Then $\mu_{n}^{u}$
  is a probability measure on $r_{X}^{-1}(u)$, and for all $\gamma\in
  G$, $\lim_{n} \|\gamma\cdot \mu_{n}^{s(\gamma)}-
  \mu_{n}^{r(\gamma)}\|_{1}=0$.  Define
  \begin{equation*}
    \lambda_{n}^{u}= \int_{X} m^{x}\,d\mu_{n}^{u} (x).
  \end{equation*}
  Then $\lambda_{n}^{u}$ is a probability measure supported in
  $G^{u}$.  The system $\lambda_{n}=\sset{\lambda_{n}^{u}}_{u\in\go}$
  is certainly Borel, and
  \begin{align*}
    \|\gamma\cdot
    \lambda_{n}^{s(\gamma)}-\lambda_{n}^{r(\gamma)}\|_{1} &=
    \Bigl\|\int_{X}\gamma\cdot m^{y}\,d\mu_{n}^{s(\gamma)}(y) -
    \int_{X} m^{x} \,\mu_{n}^{r(\gamma)}(x) \Bigr\|
    \\
    &= \Bigl\| \int_{X}m^{\gamma\cdot x}\,d\mu_{n}^{s(\gamma)} (y) -
    \int_{X} m^{x}\, d\mu_{n}^{r(\gamma)}(x)\Bigr\| \\
    &= \Bigl\|\int_{X} m^{x}\,d(\gamma\cdot \mu_{n}^{s(\gamma)} -
    \mu_{n}^{r(\gamma)})(x) \Bigr\|\\
    &\le \|\gamma\cdot \mu_{n}^{s(\gamma)}-\mu_{n}^{r(\gamma)}\|_{1}.
  \end{align*}
  It follows that $\lambda$ is an approximate invariant mean for
  $r:G\to\go$.  Thus $G$ is Borel amenable.
\end{proof}

Of course, a proper Borel groupoid is Borel
  amenable.  Our next lemma should be compared with
  \cite{anaren:amenable00}*{Proposition~5.3.37}.

\begin{lemma}
  \label{lem-incr-union}
  Suppose that $G$ is a Borel groupoid which is the increasing union
  of a sequence of proper Borel subgroupoids $G_{n}$.  Then $G$ is
  Borel amenable.
\end{lemma}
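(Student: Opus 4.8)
The plan is to build an approximate invariant mean for $r:G\to\go$ directly out of the \emph{exact} invariant means supplied by the properness of the $G_n$, exploiting the fact that every $\gamma\in G$ eventually lies in $G_n$. Since each $G_n$ is a proper Borel groupoid, Definition~\ref{def-prop-amen} furnishes an invariant Borel $r$-system $m_n=\sset{m_n^u}_{u\in G_n^{(0)}}$ of probability measures with $m_n^u$ supported in $(G_n)^u\subseteq G^u$ and satisfying the exact invariance $\gamma\cdot m_n^{s(\gamma)}=m_n^{r(\gamma)}$ for every $\gamma\in G_n$.

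The first step is to promote each $m_n$ to a Borel system indexed by all of $\go$. The unit space $G_n^{(0)}=G_n\cap\go$ is a Borel subset of $\go$, so I can set $m_n^u=\delta_u$ (the point mass at the unit $u$, viewed as a point of $G$) for $u\in\go\setminus G_n^{(0)}$. Each $m_n^u$ is then a probability measure supported in $G^u$, and the resulting system $\lambda_n=\sset{m_n^u}_{u\in\go}$ on $G$ is Borel, being Borel on each of the two complementary Borel pieces of $\go$.

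The second step is to verify approximate invariance. Fix $\gamma\in G$. Because $G=\bigcup_n G_n$ is an increasing union, there is an $N$ with $\gamma\in G_n$ for all $n\ge N$; for such $n$ both $s(\gamma)$ and $r(\gamma)$ lie in $G_n^{(0)}$, so the measures appearing are the original invariant-mean measures and the exact invariance gives $\gamma\cdot m_n^{s(\gamma)}=m_n^{r(\gamma)}$, whence $\|\gamma\cdot\lambda_n^{s(\gamma)}-\lambda_n^{r(\gamma)}\|_1=0$. In particular $\lim_n\|\gamma\cdot\lambda_n^{s(\gamma)}-\lambda_n^{r(\gamma)}\|_1=0$, so $\sset{\lambda_n}$ is an approximate invariant mean for $r:G\to\go$ and $G$ is Borel amenable by Definition~\ref{def-borel-amen-grp-space}.

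The argument is short, and the one place that needs care rather than ingenuity is the bookkeeping of the first step: one must check that the $\delta$-extension keeps each system Borel over all of $\go$ and keeps every measure supported in the correct fibre $G^u$, so that approximate invariance in the total-variation norm is even meaningful. The conceptual heart --- that the increasing union converts the separate exact invariances into a single approximate invariance --- requires nothing beyond the definition of the union, and so I expect no genuine obstacle here.
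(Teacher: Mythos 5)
Your proof is correct and follows essentially the same route as the paper's: extract the exact invariant means furnished by properness of each $G_n$, extend them to Borel systems of probability measures over all of $\go$, and observe that for each fixed $\gamma$ the exact invariance on $G_n$ applies for all large $n$, giving approximate invariance. Your explicit extension by point masses $\delta_u$ on $\go\setminus G_n^{(0)}$ is a perfectly valid (and arguably more concrete) substitute for the paper's appeal to disintegration of measures in the extension step.
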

\begin{proof}
  By assumption, we can find an system $m_{m}=(m_{n}^{u})_{u\in
      \go_{n}}$ of invariant probability measures on $G_{n}$; that is,
    $\gamma\cdot m_{n}^{s(\gamma) } = m_{n}^{r(\gamma)}$ for all
    $\gamma\in G_{n}$. Using the standard theory of disintegration of
    measures for example (see \cite{wil:crossed}*{Theorem~I.5}), we
    can extend each $m_{m}$ to a Borel system of 
    probability measures on all of $\go$.  Then 
  that $m=(m_{n})$ is an approximate invariant mean for $r:G\to\go$.
\end{proof}

\section{Borel Equivalence}
\label{sec:borel-equivalence}

The definition of equivalence for Borel groupoids is given in the
appendix of \cite{anaren:amenable00}.  In light of \cite{ren:xx13}, it
turns out to be a much more significant notion than originally
thought.

\begin{definition}[\cite{anaren:amenable00}*{Definition~A.1.11}]
  \label{def-borel-equi}
  Let $G$ and $H$ be Borel groupoids.  A \emph{$(G, H)$-Borel equivalence} is
  a Borel space $Z$ such that
  \begin{enumerate}
  \item $Z$ is a free and proper left Borel $G$-space,
  \item $Z$ is a free and proper right Borel $H$-space,
  \item The $G$- and $H$-actions commute,
  \item $r:Z\to\go$ induces a Borel isomorphism between $Z/H$ and
    $\go$, and
  \item $s:Z\to\ho$ induces a Borel isomorphism between $G\backslash
    Z$ and $\ho$.
  \end{enumerate}
  In this case, we say that $G$ and $H$ are \emph{Borel equivalent}.
\end{definition}

It is proved in \cite{anaren:amenable00}*{Theorem~2.2.17} that
equivalence of locally compact groupoids preserves (topological)
amenability.  Here we show a similar result holds in the Borel case
using virtually the same argument.
\begin{thm}
  \label{thm-borel-equiv}
  Suppose that $G$ and $H$ are equivalent Borel groupoids.  If $H$ is
  Borel amenable (resp., properly amenable), then so is $G$.
\end{thm}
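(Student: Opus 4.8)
The plan is to apply Lemma~\ref{lem-key} with the equivalence space $Z$ playing the role of the proper $G$-space. Let $Z$ be a $(G,H)$-Borel equivalence as in Definition~\ref{def-borel-equi}. Condition~(a) says that $Z$ is a free and proper left Borel $G$-space, so once we know that the moment map $r_{Z}:Z\to\go$ is $G$-amenable, Lemma~\ref{lem-key} will give that $G$ is Borel amenable. The structural facts I exploit are that $r_{Z}$ descends to the Borel isomorphism $Z/H\cong\go$, so that each fibre $r_{Z}^{-1}(u)$ is a single $H$-orbit on which the right $H$-action is free, and that this action commutes with the left $G$-action and fixes $s_{Z}$.

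The first step is to choose a Borel section $\sigma:\go\to Z$ of $r_{Z}$ together with the associated Borel cocycle $\eta:G\to H$ determined by freeness via $\gamma\cdot\sigma(s(\gamma))=\sigma(r(\gamma))\cdot\eta(\gamma)$. Here $\sigma$ exists because the orbit space $Z/H$ is standard (condition~(d)) and the $H$-action is free (condition~(b)): this is exactly the situation covered by the Effros--Mackey--Ramsay measurable selection theory for free actions with standard orbit space. The map $\eta$ is Borel because, by freeness, the ``division'' map $(z,z')\mapsto(\text{the unique }\zeta\text{ with }z'=z\cdot\zeta)$ on same-orbit pairs has singleton fibres and is therefore Borel by the Lusin--Novikov theorem.

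Next I would transport an approximate invariant mean from $H$ to $Z$. Since $H$ is Borel amenable there is a sequence $\sset{\beta_{n}}$ of Borel systems $\beta_{n}^{v}$ of probability measures on $\set{\zeta\in H:r(\zeta)=v}$ with $\|\zeta\cdot\beta_{n}^{s(\zeta)}-\beta_{n}^{r(\zeta)}\|_{1}\to0$ for every $\zeta\in H$. For $u\in\go$ let $m_{n}^{u}$ be the push-forward of $\beta_{n}^{s(\sigma(u))}$ under the orbit map $\zeta\mapsto\sigma(u)\cdot\zeta$; this is a probability measure supported in $r_{Z}^{-1}(u)$, and $u\mapsto m_{n}^{u}$ is Borel because $\sigma$ and the $\beta_{n}$ are. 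Using that the two actions commute, that $G$ fixes $s_{Z}$, and the cocycle identity, one checks that $\gamma\cdot m_{n}^{s(\gamma)}$ is the push-forward of $\eta(\gamma)\cdot\beta_{n}^{s(\eta(\gamma))}$ along $\zeta\mapsto\sigma(r(\gamma))\cdot\zeta$, while $m_{n}^{r(\gamma)}$ is the push-forward of $\beta_{n}^{r(\eta(\gamma))}$ along the same map. Since the orbit map is injective (freeness), push-forward along it preserves the total variation norm, so
\begin{equation*}
  \|\gamma\cdot m_{n}^{s(\gamma)}-m_{n}^{r(\gamma)}\|_{1}
  =\|\eta(\gamma)\cdot\beta_{n}^{s(\eta(\gamma))}-\beta_{n}^{r(\eta(\gamma))}\|_{1}\longrightarrow0 .
\end{equation*}
Thus $r_{Z}$ is $G$-amenable and Lemma~\ref{lem-key} applies.

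For the properly amenable case I would run the identical construction starting from an honest invariant mean $\beta=\sset{\beta^{v}}$ for $H$. Exact invariance makes the push-forward defining $m^{u}$ independent of the chosen base point in the orbit, so $\sset{m^{u}}$ is a genuinely $G$-invariant Borel $r_{Z}$-system and $r_{Z}$ is $G$-properly amenable; the proper analogue of Lemma~\ref{lem-key}, obtained by repeating its proof with exact in place of approximate invariance, then shows that $G$ is proper. I expect the main obstacle to be the measurable bookkeeping in the Borel category --- producing the Borel section $\sigma$ and cocycle $\eta$ and verifying that push-forward along the free orbit map is a total variation isometry; granting these, the displayed estimate is the Borel counterpart of the computation in \cite{anaren:amenable00}*{Theorem~2.2.17}.
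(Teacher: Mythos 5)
Your overall architecture is the paper's own, in mirrored form: reduce via Lemma~\ref{lem-key} to showing that the moment map on the equivalence $Z$ is amenable, transport the approximate mean from $H$ along the orbits, and estimate in total variation. The genuine gap is the very first step: the Borel section $\sigma:\go\to Z$ of $r_{Z}$. In the purely Borel category no such section need exist. A Borel surjection between standard (let alone analytic) Borel spaces need not admit a Borel right inverse --- this is the classical failure of Borel uniformization: Jankov--von Neumann produces only universally measurable selections, Kond\^o only coanalytic ones, and the standard sufficient conditions (countable or $\sigma$-compact fibres, large sections) are not available for the fibres of $r_{Z}$, which are full $H$-orbits. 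The Effros--Mackey--Ramsay results you invoke do not apply either: they are topological (requiring a continuous action on a Polish space with locally closed orbits) or measure-theoretic (producing sections only off a null set, which is useless for Borel amenability, an everywhere notion quantified over \emph{all} $\gamma\in G$). The paper itself flags this danger: Lemma~\ref{lem-proper} takes the existence of a Borel cross section as a \emph{hypothesis}, and in Proposition~\ref{prop-gy-equi-ker} the authors have to exploit $\sigma$-compactness and Arveson's cross section theorem \cite{arv:invitation} to manufacture one; no such topological input is present in the hypotheses of Theorem~\ref{thm-borel-equiv}. (Your Lusin--Novikov citation for the cocycle $\eta$ is also the wrong tool --- the division map is Borel because it is the inverse of an injective Borel map between analytic spaces --- but that step is harmless once $\sigma$ exists.)

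The repair is exactly the paper's device for avoiding selections: average over the orbit instead of choosing a point in it. Properness of the right $H$-action on $Z$ (condition (b) of Definition~\ref{def-borel-equi}) furnishes an $H$-invariant Borel system $\rho=\sset{\rho^{z}}_{z\in Z}$ of probability measures on the orbits, which drops to a system $\dot\rho=\sset{\dot\rho^{\dot z}}$ indexed by $Z/H\cong\go$ as in Remark~\ref{rem-quotient-maps}. Define, for \emph{every} $z\in Z$, the measure $\mu_{n}^{z}$ as the push-forward of $\beta_{n}^{s_{Z}(z)}$ under $\zeta\mapsto z\cdot\zeta$; one checks exactly as in your computation that $\gamma\cdot\mu_{n}^{z}=\mu_{n}^{\gamma\cdot z}$ and that $\|\mu_{n}^{z\cdot\zeta}-\mu_{n}^{z}\|_{1}=\|\zeta\cdot\beta_{n}^{s(\zeta)}-\beta_{n}^{r(\zeta)}\|_{1}$. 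Then set
\begin{equation*}
  \nu_{n}^{\dot z}=\int_{Z}\mu_{n}^{z'}\,d\dot\rho^{\dot z}(z'),
\end{equation*}
which depends only on the orbit and hence defines a Borel $r_{Z}$-system. Unlike your section-based argument, the exact isometry identity is lost, so approximate invariance requires the extra limiting step $\|\nu_{n}^{\dot z}-\mu_{n}^{z}\|_{1}\to 0$, proved by dominated convergence --- this is precisely the $\epsilon/2$-argument in the second half of the paper's proof. Note finally that your properly amenable case can be saved \emph{without} any section: with an exactly invariant $\beta$, the assignment $z\mapsto\mu^{z}$ is Borel and constant on $H$-orbits, hence drops to a Borel map on $Z/H\cong\go$ by the definition of the quotient Borel structure together with condition (d); this is the paper's argument in the first part of its proof.
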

\begin{proof}
  Let $Z$ be a $(G,H)$-equivalence.  Consider the commutative diagram
  \begin{equation*}
    \xymatrix@C+3pc{G\ar[r]^{r}&\go\\
      Z*G\ar[u]^{p_{G}}\ar[r]^{p_{Z}}\ar[d]_{p}&Z\ar[d]^{q}\ar[u]_{r}\\
      Z\ar[r]^{q}&G\backslash Z,}
  \end{equation*}
  where $p(z,\gamma):= \gamma^{-1}\cdot z$.

  First, assume that $G$ is properly amenable.  To see that $H$ is
  properly amenable it will suffice, by
  \cite{anaren:amenable00}*{Corollary~2.1.7}, to see that that $H$-map
  $s:Z\to\ho$ is properly amenable.

  Let $\lambda=\sset{\lambda^{u}}_{u\in\go}$ be an invariant mean for
  $r:G\to\go$. We first build an invariant mean $\lambda_{Z}$ for
  $p_{Z}:Z*G\to Z$: let $\lambda_{Z}^{z}$ be given by
  \begin{equation*}
    \lambda_{Z}^{z}(f)=\int_{G}f(z,\lambda)\,d\lambda^{r_{Z}(z)}(\gamma).
  \end{equation*}
  Consider the measure on $Z$ obtained by the push forward
  $q_{*}\lambda_{Z}^{z}$.  Since invariance implies 
  $\eta^{-1}\cdot \lambda^{r(\eta)}=\lambda^{s(\eta)}$ for all
  $\eta\in G$, it follows that
  \begin{align*}
    q_{*}\lambda_{Z}^{\eta\cdot z}(b)&=\int_{G}b(p(\eta\cdot
    z,\gamma))\,d\lambda_{Z}^{\eta\cdot z}(\gamma) \\
    &=\int_{G}b(\gamma^{-1}\eta\cdot z) \,d\lambda^{r_{Z}(\eta\cdot
      z)}(\gamma) = \int_{G}b(\gamma^{-1}\cdot z)\,d(\eta^{-1}\cdot
    \lambda^{r(\eta)})(\gamma)\\
    & = q_{*}\lambda_{Z}^{z}(b).
  \end{align*}
  Hence the push forward $q_{*}\lambda_{Z}^{z}$ depends only on $\dot
  z$ in $G\backslash Z$. Since $Z$ is a Borel equivalence, we can
  identify the quotient map $q:Z\to G\backslash Z$ with the moment map
  $s:Z\to\ho$.  Thus we get an $s$-system of measures
  $\nu=\sset{\nu^{v}}_{v\in\ho}$ where $\nu^{v}=q_{*}\lambda_{Z}^{z}$
  for any $z$ with $s(z)=v$.  It will suffice to see that $\nu$ is
  invariant.  But if $s(z)=r(h)$, then
  \begin{align*}
    \int_{Z}b(z)\,d(\nu^{r(h)}\cdot h)(z)
    &=\int_{G}b((\gamma^{-1}\cdot z)\cdot h)\,d\lambda^{r(z)}(\gamma)
    =\int_{G} b(\gamma^{-1}\cdot
    (z\cdot h))\,d\lambda^{r(z\cdot h}(\gamma)\\
    & = \int_{Z} b(z)\,d\nu^{s(h)}(z).
  \end{align*}
  This completes the proof for proper amenability.

  Now we assume that $G$ is Borel amenable and that
  $\lambda=\sset{\lambda_{n}}$ is an approximately invariant mean for
  $r:G\to\go$ with $\lambda_{n}=\sset{\lambda_{n}^{u}}_{u\in\go}$.  By
  Lemma~\ref{lem-key}, it will suffice to show that the $H$-map
  $s:Z\to\ho$ is Borel amenable.  The argument parallels that above
  argument for proper amenability.

  We begin by lifting each $\lambda_{n}$ to a system
  $\lambda_{n,Z}=\sset{\lambda_{n,Z}^{z}}_{z\in Z}$ of probability
  measures for $p_{Z}:Z*G\to Z$:
  \begin{equation*}
    \lambda_{n,Z}^{z}(f)=\int_{G}
    f(z,\gamma)\,d\lambda_{n}^{r(z)}(\gamma). 
  \end{equation*}
  We let $\mu_{n}^{z}=q_{*}\lambda_{n,Z}^{z}$.  Since
  $\lambda_{n,Z}$ is not necessarily invariant, we can't assert that
  $\mu_{n}^{z}$ depends only on $\dot z$.  However
  \begin{equation*}
    \int_{Z}b(w)\,d\mu_{n}^{z}(w)=\int_{G} b(\gamma^{-1}\cdot
    z)\,d\lambda_{n}^{r(z)}(\gamma), 
  \end{equation*}
  and $\mu_{n}^{z}$ is supported on $q^{-1}(z)=G\cdot z$.

  Since $Z$ is a proper $G$-space, by definition there is a
  $G$-invariant system of probability measures
  $\rho=\sset{\rho^{z}}_{z\in Z}$ for the Borel $G$-map $p_{Z}:Z*G\to
  Z$ which we view as a family of measures on $G$ such that $\supp
  \rho^{z}\subset G^{r(z)}$.  As in Remark~\ref{rem-quotient-maps},
  $\rho$ drops to a system $\dot \rho=\sset{\dot \rho^{\dot z}}_{\dot
    z\in G\backslash Z}$ for the quotient map $q:Z\to G\backslash Z$.
  Since $(z,\gamma)\mapsto \gamma^{-1}\cdot z$ identifies $G\backslash
  (Z*G)$ with $Z$, we can view these as measures on $Z$.  Explicitly,
  $\dot \rho^{\dot z}= p_{*}\rho^{z}$ and
  \begin{equation*}
    \dot\rho^{\dot z}(b)=\int_{G}b(\gamma^{-1}\cdot z)\,d\rho^{z}(\gamma).
  \end{equation*}
  Using the invariance of $\rho$ we get measures depending only on
  $\dot z$ supported on $q^{-1}(\dot z)$ by averaging the
  $\mu_{n}^{z}$ with respect to $\dot\rho^{\dot z}$:
  \begin{equation}
    \label{eq:4}
    \nu_{n}^{\dot z} = \int_{Z}\mu_{n}^{z}\,d\dot\rho^{\dot
      z}(z)=\int_{G} \mu_{n}^{\gamma^{-1}\cdot z}\,d\rho^{z}(\gamma).
  \end{equation}

  As in the first part of the proof, we use the fact that $Z$
  implements an equivalence to identify the quotient map $q:Z\to
  G\backslash Z$ with the moment map $s:Z\to\ho$.  Thus we get an
  $s$-system $\sset{\nu^{v}}_{v\in\ho}$ where $\nu^{v}=\nu^{\dot z}$
  for any $z$ such that $s(z)=v$.  We will complete the proof by
  showing that $\nu$ is an approximately invariant mean for $s$.
  Therefore we need to see that for all $h\in H$,
  $\|\nu^{r(h)}_{n}\cdot h - \nu_{n}^{s(h)}\|_{1}$ tends to zero with
  $n$.  Notice that if $(z,\gamma)\in Z*G$, then
  \begin{equation*}
    \|\mu_{n}^{z}-\mu_{n}^{\gamma^{-1}\cdot z}\|_{1}\le
    \|\lambda_{n}^{r(\gamma)}-\lambda\cdot \lambda_{n}^{s(\gamma)}\|_{1}.
  \end{equation*}
  Next we claim that for fixed $z\in Z$, $\lim_{n}\|\nu^{\dot z}
  -\mu_{n}^{z}\|_{1}=0$.  To see this we employ \eqref{eq:4}, view
  $\rho^{z}$ as a measure on $G^{r(z)}$, and deduce that
  \begin{equation}
    \label{eq:5}
    \|\nu_{n}^{\dot z}-\mu_{n}^{z}\|_{1}\le \int_{G}
    \|\mu_{n}^{\gamma^{-1}\cdot z} - \mu_{n}^{z}\|\,d\rho^{z}(\gamma)
    \le \int_{G} \|\lambda_{n}^{r(\gamma)}-\gamma\cdot
    \lambda_{n}^{s(\gamma)}\|_{1} \,d\rho^{z}(\gamma).
  \end{equation}
  Since for each $\gamma$,
  $\lim_{n}\|\lambda_{n}^{r(\gamma)}-\gamma\cdot
  \lambda_{n}^{s(\gamma)}\|_{1}=0$, we see that \eqref{eq:5} goes to
  zero by the Lebesgue Dominated Convergence Theorem.

  Now fix $h\in H$ and $\epsilon>0$.  Let $z\in Z$ be such that
  $s(z)=r(h)$ and let $z'=z\cdot h$ and observe that $\mu_{n}^{z}\cdot
  h= \mu_{n}^{z'}$.  Let $M$ be such that $n\ge M$ implies that
  \begin{equation*}
    \|\nu_{n}^{\dot z'}-\mu_{n}^{z'}\| <\frac\epsilon2.
  \end{equation*}
  Then for $n\ge M$, we have
  \begin{align*}
    \|\nu_{n}^{r(h)}\cdot h -\nu_{n}^{s(h)}\|_{1}&\le
    \|\nu_{n}^{r(h)}\cdot h -\mu_{n}^{z}\cdot h\|_{1}+\|
    \mu_{n}^{z}\cdot h - \mu_{n}^{\dot z}\| + \|\mu_{n}^{\dot z} -
    \nu_{n}^{s(h)}\| \\
    &= \|\nu_{n}^{\dot z'}-\mu_{n}^{z'}\|_{1}+0+\|\mu_{n}^{z'} -
    \nu_{n}^{\dot z'}\| <\epsilon.
  \end{align*}
  Thus $s:Z\to\ho$ is Borel amenable.  This completes the proof.
\end{proof}


We close this section with two technical results which will be of use
in the next section.  Recall from 
\cite{ren:xx13}*{Definition~2.3} that a \emph{Borel approximate invariant
density} on $G$ is a sequence $(g_{n})$ of non-negative Borel functions
on $G$ such that
\begin{gather*}
  \int_{G} g_{n}(\gamma)\,d\lambda^{u}(\gamma)\le 1 \quad\text{for all
  $n$,}\quad \int_{G}g_{n}(\gamma)\,d\lambda^{u}(\gamma)\to
1\quad\text{for all $u\in\go$ and} \\
\int_{G}\bigl|
g_{n}(\gamma^{-1}\gamma')-g_{n}(\gamma')\bigr|\,d\lambda^{r(\gamma)}
(\gamma')\to 0\quad\text{for all $\gamma\in G$.}
\end{gather*}

\begin{lemma}
  \label{lem-borel-cover}
  Let $G$ be a Borel groupoid with a Borel Haar system
  $\sset{\lambda^{u}}_{u\in\go}$.  Let $\sset{Y_{i}}_{i=1}^{\infty}$
  be a countable cover of $\go$ by invariant Borel subsets such that
  each $G\restr {Y_{i}}$ is Borel amenable.  Then $G$ is Borel
  amenable.
\end{lemma}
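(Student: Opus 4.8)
The plan is to reduce to a \emph{disjoint} invariant cover and then splice the individual approximate invariant means together one arrow at a time. First I would disjointify: set $X_{1}=Y_{1}$ and $X_{i}=Y_{i}\setminus\bigcup_{j<i}Y_{j}$ for $i\ge2$. Since the collection of invariant Borel subsets of $\go$ is closed under complements and finite unions, each $X_{i}$ is again an invariant Borel subset; moreover the $X_{i}$ are pairwise disjoint and $\bigcup_{i}X_{i}=\bigcup_{i}Y_{i}=\go$.

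Next I would check that $G\restr{X_{i}}$ is Borel amenable for every $i$. Since $X_{i}\subset Y_{i}$ is invariant, $G\restr{X_{i}}=(G\restr{Y_{i}})\restr{X_{i}}$, so it suffices to observe that the restriction of a Borel amenable groupoid to an invariant Borel subset of its unit space is again Borel amenable. Indeed, if $\sset{m_{n}}$ is an approximate invariant mean for $r\colon G\restr{Y_{i}}\to Y_{i}$, then for $u\in X_{i}$ invariance gives $(G\restr{X_{i}})^{u}=G^{u}=(G\restr{Y_{i}})^{u}$, so each $m_{n}^{u}$ is already a probability measure carried by the correct fibre, and the subfamily $\sset{m_{n}^{u}}_{u\in X_{i}}$ is visibly an approximate invariant mean for $G\restr{X_{i}}$. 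Denote the resulting mean by $m^{(i)}=\sset{m^{(i)}_{n}}$.

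Finally I would glue. For $u\in\go$ let $i(u)$ be the unique index with $u\in X_{i(u)}$ and set $\mu_{n}^{u}=(m^{(i(u))}_{n})^{u}$. Then each $\mu_{n}^{u}$ is a probability measure supported on $G^{u}$, and $u\mapsto\mu_{n}^{u}(f)$ is Borel for every nonnegative Borel $f$ because the partition $\sset{X_{i}}$ is Borel and this map agrees with the Borel map $u\mapsto(m^{(i)}_{n})^{u}(f)$ on each piece $X_{i}$. To verify approximate invariance, fix $\gamma\in G$; since $r(\gamma)$ and $s(\gamma)$ lie in a common orbit and the $X_{i}$ are invariant and disjoint, both units lie in the same $X_{i}$, so $\gamma\in G\restr{X_{i}}$ and
\[
  \|\gamma\cdot\mu_{n}^{s(\gamma)}-\mu_{n}^{r(\gamma)}\|_{1}
  =\|\gamma\cdot(m^{(i)}_{n})^{s(\gamma)}-(m^{(i)}_{n})^{r(\gamma)}\|_{1}\longrightarrow0.
\]
Hence $\sset{\mu_{n}}$ is an approximate invariant mean for $r\colon G\to\go$, and $G$ is Borel amenable.

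I do not expect a serious obstacle; the whole content sits in two observations, namely that disjointifying the cover preserves both invariance and Borel amenability, and that approximate invariance is a \emph{pointwise} condition tested one arrow $\gamma$ at a time, with each $\gamma$ belonging to exactly one of the groupoids $G\restr{X_{i}}$. This is precisely why no \emph{uniform} rate of convergence across the pieces is required, and why the non-disjoint case (where a unit lying in an overlap would have to be assigned a single measure) is the thing to avoid. The only point needing care is the Borel measurability of the spliced system, which reduces to the measurability of the partition $\sset{X_{i}}$.
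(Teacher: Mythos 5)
Your proof is correct, and it rests on the same disjointification as the paper's: the paper sets $B_{1}=Y_{1}$ and $B_{i}=Y_{i}\setminus\bigcup_{j=1}^{i-1}Y_{j}$, exactly your $X_{i}$, and exploits the same mechanism you identify --- invariance forces $s(\gamma)$ and $r(\gamma)$ into a single piece, so approximate invariance, being tested one arrow at a time, survives splicing with no uniform rate needed. Where you genuinely diverge is the technical vehicle. The paper does not splice the means directly: it first invokes \cite{ren:xx13}*{Proposition~2.4} to convert the approximate invariant mean on each $G\restr{Y_{i}}$ into a Borel approximate invariant \emph{density} $\sset{g_{n}^{i}}$ (a sequence of functions with $\int_{G}g_{n}^{i}\,d\lambda^{u}\le 1$, tending to $1$, and asymptotically translation invariant), and then glues by $g_{n}(\gamma)=\sum_{i}g_{n}^{i}(\gamma)\,b^{i}(s(\gamma))$, where $b^{i}$ is the characteristic function of $B_{i}$; this passage between means and densities is precisely why the lemma's hypotheses include a Borel Haar system, and the paper must also note that invariance of $Y_{i}$ makes $\lambda$ restrict to a Haar system on $G\restr{Y_{i}}$ before Proposition~2.4 can be applied. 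Your argument works directly with the systems of probability measures of Definition~\ref{def-borel-amen} and never touches the Haar system, so it actually establishes the slightly stronger statement in which the Haar-system hypothesis is deleted (harmless in the paper's application, since the skew product $G(c)$ does carry a Haar system, but worth knowing). Your explicit intermediate step --- that Borel amenability passes to reductions by invariant Borel subsets, because $(G\restr{X_{i}})^{u}=G^{u}$ for $u\in X_{i}$ --- is only implicit in the paper, and your measurability check for the spliced system (that $u\mapsto\mu_{n}^{u}(f)$ agrees with a Borel map on each Borel piece of the partition) is the direct analogue of the paper's multiplication by $b^{i}\circ s$. In short: same decomposition, different key lemma; the paper's route buys consistency with the density formalism it uses elsewhere, while yours buys elementarity and a weaker hypothesis.
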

\begin{proof}
  Since each $Y_{i}$ is invariant, $u\in Y_{i}$ implies that
  $(G\restr{Y_{i}})^{u} = G^{u}$.  Hence $\lambda$ restricts to a
  Borel Haar system on $G\restr{Y_{i}}$.  Hence by
  \cite{ren:xx13}*{Proposition~2.4}, the Borel amenability of
  $G\restr{Y_{i}}$ implies that there is a Borel approximate invariant
  density),
  $\sset{g_{n}^{i}}_{n=1}^{\infty}$, on $G\restr {Y_{i}}$.

  Let $B_{1}=Y_{1}$ and if $i\ge 2$, let $B_{i}=Y_{i
  }\setminus\bigcup_{j=1}^{i-1}Y_{j}$.  Then the $\sset{B_{i}}$ are a
  pairwise disjoint cover of $\go$ by invariant Borel sets (some of
  which might be empty).  Let $b^{i}$ be the characteristic function
  of $B_{i}$.  Then each $b^{i}$ is a Borel function on $\go$ (taking
  the values $0$ and $1$) such that $b_{i}$ vanishes off $Y_{i}$ and
  for each $u\in\go$, there is one and only one $i$ such that
  $b^{i}(u)=1$.  In particular, we trivially have
  \begin{equation*}
    \sum_{i=1}^{\infty} b^{i}(u)=1\quad\text{for all $u\in \go$.}
  \end{equation*}

  For each $n$, define $g_{n}$ on $G$ by
  \begin{equation*}
    g_{n}(\gamma)=\sum_{i} g_{n}^{i}\cdot b^{i},
  \end{equation*}
  where $g_{n}^{i}\cdot
  b^{i}(\gamma)=g_{n}^{i}(\gamma)b^{i}(s(\gamma))$.

  By invariance,
  \begin{equation*}
    \int_{G}g_{n}(\gamma)\,d\lambda^{u}(\gamma) =
    \int_{G}g_{n}^{i}(\gamma) \,d\lambda^{u}(\gamma),
  \end{equation*}
  where $i$ is such that $b^{i}(u)=1$.  Now it is clear that
  $\sset{g_{n}}$ is a Borel approximate invariant density for $G$.
  Hence $G$ is Borel amenable as claimed.
\end{proof}


Recall that by assumption, all of our Borel spaces, and our Borel
groupoids in particular, are analytic Borel spaces.

\begin{lemma}
  \label{lem-proper}
  Let $G$ be a Borel groupoid acting freely on (the right of) a Borel
  space $X$ such that the quotient map $q:X\to X/G$ has a Borel cross
  section.  Then $X$ is a proper Borel $G$-space.
\end{lemma}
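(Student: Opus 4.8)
The plan is to produce an \emph{exact} invariant Borel system of probability measures, and in fact to take each measure to be a point mass, so that the whole problem collapses to constructing a single equivariant Borel ``cocycle'' $c\colon X\to G$. By the right-handed analogue of Remark~\ref{rem-amen-gspace}, writing $s\colon X\to\go$ for the moment map of the right action, it suffices to exhibit a Borel family $\sset{m^{x}}_{x\in X}$ of probability measures on $G$ with $\supp m^{x}\subseteq G_{s(x)}$ (the source-fibre $G_{s(x)}=\set{\eta\in G:s(\eta)=s(x)}$) such that $m^{x}\cdot\gamma=m^{x\cdot\gamma}$ whenever $s(x)=r(\gamma)$, where $m^{x}\cdot\gamma$ is the push forward of $m^{x}$ under right translation $\eta\mapsto\eta\gamma$. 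If I can find a Borel $c\colon X\to G$ with $s(c(x))=s(x)$ and $c(x\cdot\gamma)=c(x)\gamma$, then $m^{x}:=\delta_{c(x)}$ will do the job.

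The first and only genuinely delicate step is the construction of a Borel \emph{division map}. Since the action is free, for every pair $(x,x')$ with $q(x)=q(x')$ there is a \emph{unique} $\gamma\in G$ with $x\cdot\gamma=x'$; call it $d(x,x')$. I would show $d$ is Borel by considering $\Phi\colon X*G\to X\times X$, $\Phi(x,\gamma)=(x,x\cdot\gamma)$, where $X*G=\set{(x,\gamma):s(x)=r(\gamma)}$. This map is Borel, it is injective by freeness, and its image is exactly $R=\set{(x,x'):q(x)=q(x')}$. Because all our Borel spaces are analytic, the Lusin--Souslin theorem guarantees that $\Phi$ is a Borel isomorphism onto $R$ with Borel inverse; composing $\Phi^{-1}$ with the coordinate projection onto $G$ then exhibits $d\colon R\to G$ as Borel. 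This is the step I expect to be the main obstacle: it is precisely where freeness (to get injectivity of $\Phi$) and the standing analyticity hypothesis (to invoke Lusin--Souslin) are both essential.

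Granting a Borel cross section $\sigma\colon X/G\to X$ for $q$, I would then set $c(x)=d\bigl(x,\sigma(q(x))\bigr)^{-1}$, which is Borel as a composite of Borel maps. Since $x\cdot d(x,\sigma(q(x)))=\sigma(q(x))$ forces $r\bigl(d(x,\sigma(q(x)))\bigr)=s(x)$, we get $s(c(x))=s(x)$, so $c(x)\in G_{s(x)}$. For equivariance, fix $\gamma$ with $s(x)=r(\gamma)$; then $q(x\cdot\gamma)=q(x)$ gives
\begin{equation*}
  \sigma(q(x\cdot\gamma))=\sigma(q(x))=x\cdot d(x,\sigma(q(x)))=(x\cdot\gamma)\cdot\bigl(\gamma^{-1}d(x,\sigma(q(x)))\bigr),
\end{equation*}
so by uniqueness $d(x\cdot\gamma,\sigma(q(x\cdot\gamma)))=\gamma^{-1}d(x,\sigma(q(x)))$, whence $c(x\cdot\gamma)=d(x,\sigma(q(x)))^{-1}\gamma=c(x)\gamma$.

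Finally I would put $m^{x}=\delta_{c(x)}$. Each $m^{x}$ is a probability measure supported in $G_{s(x)}$, the assignment $x\mapsto m^{x}$ is Borel since $c$ is, and the cocycle identity yields $m^{x}\cdot\gamma=\delta_{c(x)\gamma}=\delta_{c(x\cdot\gamma)}=m^{x\cdot\gamma}$. Thus $\sset{m^{x}}_{x\in X}$ is an invariant Borel system of probability measures, and $X$ is a proper Borel $G$-space. Everything after the Borel measurability of $d$ is a routine verification.
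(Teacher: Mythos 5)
Your proof is correct and is essentially the paper's own argument: both hinge on the map $(x,\gamma)\mapsto(x,x\cdot\gamma)$ being a Borel bijection of $X*G$ onto the orbit equivalence relation (freeness plus the Lusin--Souslin/Arveson Corollary~2 of Theorem~3.3.4 for analytic Borel spaces), and both take point masses determined by the cross section. The only cosmetic difference is that the paper transfers properness to the relation groupoid $X*_{q}X$, where invariance of $\delta_{x}\times\delta_{\sigma(q(x))}$ is immediate, while you pull the point masses back through the isomorphism and verify equivariance of the resulting cocycle $c(x)=d\bigl(x,\sigma(q(x))\bigr)^{-1}$ by hand.
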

\begin{proof}
  Let $X\rtimes G$ be the transformation groupoid for the right
  $G$-action.\footnote{Here $X\rtimes G=\set{(x,\gamma)\in X\times
      G:s(x)=r(\gamma)}$ has multiplication defined by
    $(x,\gamma)(x\cdot \gamma,\eta)=(x,\gamma\eta)$.}  The map
  $(x,\gamma)\mapsto (x,x\cdot \gamma)$ is a Borel as well as an
  algebraic isomorphism of $X\rtimes G$ onto the equivalence relation
  groupoid $X*_{q}X=\set{(x,y)\in X\times X:q(x)=q(y)}$.  Since
  $X*_{q}X$ and $X\rtimes G$ are both Borel subsets of the
  corresponding product spaces, they are themselves analytic Borel
  spaces.  Hence $X\rtimes G$ and $X*_{q}X$ are Borel isomorphic by
  Corollary 2 of \cite{arv:invitation}*{Theorem~3.3.4}.  Hence it
  suffices to see that $X*_{q}X$ is proper.

  We can identify $X$ with the unit space of $X*_{q}X$.  For each
  $x\in X$, let $m^{x}= \delta_{x}\times \delta_{c(q(x))}$ where
  $c:X/G\to X$ is our Borel cross section for $q$.  Since
  $c(q(x))=c(q(y))$ if $(x,y)\in X*_{q}X$, we have $(y,x)\cdot
  m^{x}=m^{y}$.  Hence $X$ is a proper Borel
  $G$-space.  
\end{proof}

\section{Cocycles}
\label{sec:cocycles}

In this paper, by a \emph{cocycle} on a groupoid $G$ we mean a 
homomorphism $c:G\to Q$ into a group $Q$.  As in
\cite{ren:groupoid}*{Definition~1.1.9},\footnote{In
  \cite{ren:groupoid}, $G(c)$ is described as pairs $(\gamma,a)\in
  G\times Q$.  The groupoids are isomorphic via the map
  $(a,\gamma,b)\mapsto (\gamma,a)$.} the corresponding
\emph{skew-product} groupoid is
\begin{equation*}
  G(c)=\set{(a,\gamma,b)\in Q\times G\times Q:b=ac(\gamma)}.
\end{equation*}
Then multiplication is given by
$(a,\eta,b)(b,\gamma,d)=(a,\eta\gamma,d)$ and inversion by
$(a,\gamma,b)^{-1}=(b,\gamma^{-1},a)$.  We can identify the unit space
of $G(c)$ with $\go\times Q$, and then the range and source maps are
given as expected: $r(b,\gamma,a)=(r(\gamma),b)$ while
$s(b,\gamma,a)=(s(\gamma),a)$.  If $G$ is a locally Hausdorff, locally
compact groupoid with a Haar system $\set{\lambda^{u}}_{u\in\go}$ and
if $Q$ is a locally compact group, then provided $c$ is continuous,
$G(c)$ is a locally Hausdorff, locally compact groupoid with Haar
system $\lc=\set{\lc^{(u,a)}}_{(u,a)\in \go\times Q}$ where
\begin{equation*}
  \lc^{(u,a)}(g)=\int_{G}g(a,\gamma,ac(\gamma))\,d\lambda^{u}(\gamma).
\end{equation*}

Later it will be useful to note that $Q$ acts on the left of $G(c)$ by
(groupoid) automorphisms:
\begin{equation*}
  q\cdot (b,\gamma,a)=(qb,\gamma,qa).
\end{equation*}
The action of $Q$ on $\go\times Q$, identified with the unit space of
$G(c)$, is given by $q\cdot (u,a)=(u,qa)$.

Part of our interest in $G(c)$ is due to the following result.
\begin{prop}[\cite{ren:groupoid}*{Proposition~II.3.8}]
  \label{prop-ren-groupoids}
  Suppose that $G$ is a second countable \lhlc\ groupoid with a Haar
  system and that $c$ is a continuous homomorphism from $G$ into a
  locally compact group $Q$.  If $G(c)$ and $Q$ are both amenable,
  then so is $G$.
\end{prop}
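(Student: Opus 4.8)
The plan is to deduce the amenability of $G$ from that of $G(c)$ and $Q$ by realizing $G$ as one side of a Borel equivalence and invoking Theorem~\ref{thm-borel-equiv}. Since every groupoid here is second countable, \lhlc\ and carries a Haar system, the result of \cite{ren:xx13} lets us pass freely between topological and Borel amenability; so it suffices to prove that $G$ is \emph{Borel} amenable, and we may use the amenability of $G(c)$ and of $Q$ in their Borel forms. The extra factor of $Q$ carried by $G(c)$ will be absorbed by passing to the transformation groupoid $\Gamma=G(c)\rtimes Q$ for the left $Q$-action $q\cdot(b,\gamma,a)=(qb,\gamma,qa)$ recorded just before the statement.

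First I would record the structural picture. This $Q$-action is free, and the quotient map $(b,\gamma,a)\mapsto\gamma$ identifies $Q\backslash G(c)$ with $G$; on the unit space it is $q\cdot(u,a)=(u,qa)$, and the Borel section $\gamma\mapsto(e,\gamma,c(\gamma))$ of $G(c)\to G$ together with Lemma~\ref{lem-proper} shows the action is proper. I then claim $\Gamma=G(c)\rtimes Q$ is Borel equivalent to $G$, the equivalence space being $Z=G(c)$ with $\Gamma$ acting on the left (left multiplication in $G(c)$ combined with the $Q$-action) and $G$ acting on the right by the cocycle-twisted formula $(b,\gamma,a)\cdot\eta=(b,\gamma\eta,a\,c(\eta))$, defined when $s(\gamma)=r(\eta)$. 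These two actions commute and are each free; properness is obtained once more from Lemma~\ref{lem-proper} using the Borel sections $(u,a)\mapsto(a,u,a)$ and the analogue on the $\Gamma$-side. Finally $r_{G(c)}:Z\to\Gamma^{(0)}=\go\times Q$, $(b,\gamma,a)\mapsto(r(\gamma),b)$, induces a Borel isomorphism $Z/G\cong\go\times Q$, while $(b,\gamma,a)\mapsto s(\gamma)$ induces $\Gamma\backslash Z\cong\go=G^{(0)}$; these are exactly the conditions of Definition~\ref{def-borel-equi}.

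It remains to see that $\Gamma$ itself is Borel amenable, and here both hypotheses enter. The projection $(h,q)\mapsto q$ is a cocycle $\Gamma\to Q$ whose kernel is the amenable groupoid $G(c)$ and which is surjective on every range fibre. I would combine an approximate invariant mean $\sset{m_n}$ for $G(c)$ with an approximately invariant (F\o lner) sequence $\sset{f_n}$ of probability measures on the amenable group $Q$: over a unit $(u,a)$ one has $\Gamma^{(u,a)}\cong G^{u}\times Q$, and $m_n^{(u,a)}\otimes f_n$ is a probability measure there. Because the $G(c)$-system is carried by the $Q$-action up to the error already controlled by the approximate invariance of $m_n$, and because $f_n$ absorbs left translation in $Q$, the system $\sset{m_n\otimes f_n}$ is an approximate invariant mean for $\Gamma$. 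Thus $\Gamma$ is Borel amenable, and being Borel equivalent to $G$, Theorem~\ref{thm-borel-equiv} yields the Borel amenability of $G$ and hence its amenability.

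I expect the main obstacle to be the verification of the equivalence in the second paragraph: placing the cocycle twist correctly in the commuting left-$\Gamma$ and right-$G$ actions on $Z=G(c)$, and checking freeness, properness, and the two quotient identifications, is where the bookkeeping is delicate. The combination step for the amenability of $\Gamma$ is routine once the F\o lner estimate is arranged, but it genuinely requires \emph{both} that $G(c)$ be amenable and that $Q$ be amenable: neither alone suffices, since the $G$-space $\go\times Q$ (with $G$ acting through $c$) fails to be proper whenever $G$ is not already proper, so the amenability of $Q$ by itself cannot be fed into Lemma~\ref{lem-key}.
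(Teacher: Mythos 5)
Your overall architecture (realize $G$ up to Borel equivalence, transfer Borel amenability by Theorem~\ref{thm-borel-equiv}, then pass to topological amenability via \cite{ren:xx13}) is sound in outline --- indeed it mirrors the paper's proof of Theorem~\ref{thm-skewprod} --- and the Takai-type equivalence between $G$ and $\Gamma=G(c)\rtimes Q$ implemented by $Z=G(c)$ can plausibly be made rigorous. The genuine gap is in the step where you claim $\Gamma$ is Borel amenable via the product system $m_{n}\otimes f_{n}$, and that step is precisely where the whole difficulty of the proposition lives. Parametrize $\Gamma^{(u,a)}\cong G^{u}\times Q$ by $(\gamma,q)\leftrightarrow\bigl((a,\gamma,ac(\gamma)),q\bigr)$. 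Left translation by an arrow $(h_{0},q_{0})$ with $h_{0}=(a,\gamma_{0},ac(\gamma_{0}))$ sends $(\gamma',q')$ to $(\gamma_{0}\gamma',q_{0}q')$, but its source unit is $(s(\gamma_{0}),\,q_{0}^{-1}ac(\gamma_{0}))$. Hence approximate invariance of $m_{n}\otimes f_{n}$ requires, besides the F\o lner estimate $\|q_{0}\cdot f_{n}-f_{n}\|_{1}\to0$, the estimate
\begin{equation*}
  \bigl\|(\gamma_{0})_{*}\,m_{n}^{(s(\gamma_{0}),\,q_{0}^{-1}ac(\gamma_{0}))}-m_{n}^{(r(\gamma_{0}),a)}\bigr\|_{1}\longrightarrow 0
  \qquad\text{for \emph{all} $q_{0}\in Q$.}
\end{equation*}
The approximate invariance of $m_{n}$ as a mean for $G(c)$ controls only the case $q_{0}=e$, i.e., translation along arrows of $G(c)$; for $q_{0}\neq e$ the two units generally lie in different $G(c)$-orbits (taking $\gamma_{0}$ a unit, the requirement becomes $\|m_{n}^{(u,q_{0}^{-1}a)}-m_{n}^{(u,a)}\|_{1}\to0$), and nothing in your hypotheses controls this. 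Since the two tensor factors are pushed forward independently, $\|\mu_{1}\otimes\nu_{1}-\mu_{2}\otimes\nu_{2}\|_{1}$ dominates each factor's discrepancy, so the F\o lner factor $f_{n}$, sitting in an independent slot, cannot absorb an error living in the $m_{n}$ slot. Worse, demanding (even approximate) $Q$-equivariance of $m_{n}$ in the unit-space coordinate is essentially the conclusion: identifying $G(c)^{(u,a)}\cong G^{u}$, a $Q$-equivariant invariant mean for $G(c)$ is exactly an invariant mean for $G$. So your sentence ``the $G(c)$-system is carried by the $Q$-action up to the error already controlled by the approximate invariance of $m_{n}$'' is false; and the alternative of quoting Corollary~\ref{cor-str-surj} for the strongly surjective cocycle $(h,q)\mapsto q$ would be circular, since that corollary's final step invokes the very proposition you are proving.

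The repair is to couple the F\o lner density to the \emph{unit-space} $Q$-coordinate rather than to an independent fibre coordinate, averaging the $G(c)$-means over translates so that right-F\o lner-ness absorbs the shift $a\mapsto ac(\gamma)$. That is exactly what the paper does, and then your Borel detour evaporates entirely: the paper works directly in the topological category, setting $f(\gamma)=\int_{Q}k(a)\,g(a,\gamma,ac(\gamma))\,d\alpha(a)$, where $k\in C_{c}(Q)$ is right-F\o lner to accuracy $\epsilon/2$ over $c(K)$ and $g\in\cc(G(c))$ is an approximate mean adapted to $L\times\supp k$ and to $\supp k\cdot c(K)^{-1}\times K$, and it verifies the three conditions for an approximate invariant mean on $G$ by splitting the invariance integral into a $g$-error and a $k$-error, each at most $\epsilon/2$. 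The missing idea in your proposal is precisely this convolution in the unit-space $Q$-variable; once it is in hand, the transformation groupoid $\Gamma$ and the equivalence machinery are unnecessary.
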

\begin{proof}
  By \cite{ren:xx13}*{Definition~2.7}, we need to construct a
  topological approximate invariant mean $\sset{g_{n}}$ in $\cc(G)$.
  It will suffice to show that for each pair of compact sets $K\subset
  G$ and $L\subset \go$, and each $\epsilon>0$, there is a
  nonnegative function $f\in \cc(G)$ such that
  \begin{enumerate}[ (A)]
  \item $\displaystyle{\int_{G}f(\gamma)\,d\lambda^{u}(\gamma)\le 1}$
    for all $u\in\go$,
  \item $\displaystyle{\int_{G}f(\gamma)\,d\lambda^{u}(\gamma)}\ge
    1-\epsilon$ for all $u\in L$ and
  \item $\displaystyle{\int_{G} |f(\gamma^{-1}\gamma')-f(\gamma')|\,d
      \lambda^{r(\gamma)}(\gamma')}\le \epsilon$ for all $\gamma\in
    K$.
  \end{enumerate}

  Since $Q$ is amenable and $c(K)$ is compact, there is a nonnegative
  function $k\in C_{c}(Q)$ such that with respect to a fixed
  \emph{right} Haar measure $\alpha$ on $Q$ we have
  \begin{equation*}
    \int_{Q}k(a)\,d\alpha(a)=1\quad\text{and}\quad
    \int_{Q}|k(ab)-k(a)|\,d\alpha(a) \le
    \frac\epsilon2\quad\text{for all $b\in c(K)$.}
  \end{equation*}
  On the other hand, the (topological) amenability of $G(c)$ allows us
  to find a nonnegative function $g\in \cc(G(c))$ such that
  \begin{enumerate}
  \item $\displaystyle{\int_{G}
      g(a,\gamma,ac(\gamma))\,d\lambda^{u}}\le 1$ for all
    $(u,a)\in\go\times Q$,
  \item
    $\displaystyle{\int_{G}g(a,\gamma,ac(\gamma))\,d\lambda^{u}}\ge
    1-\epsilon$ for all $(u,a)\in L\times\supp k$ and
  \item
    $\displaystyle{\int_{G}|g(ac(\gamma),\gamma^{-1}\gamma',ac(\gamma'))
      - g(a,\gamma',ac(\gamma'))| \,d\lambda^{r(\gamma)}}\le \epsilon$
    for all $(a,\gamma)\in\supp k\cdot c(K)^{-1}\times K$.
  \end{enumerate}

  Now we define a nonnegative function on $G$ via
  \begin{equation*}
    f(\gamma):=\int_{G} k(a)g(a,\gamma,ac(\gamma))\,d\alpha(a).
  \end{equation*}
  If $g$ is continuous and supported in a compact Hausdorff subset,
  then $f$ is too.  Since in general, $g$ a finite sum of such
  functions, we have $f\in \cc(G)$.  Then it is easy to check $f$
  satisfies (A)~and (B) above.  On the other hand,
  \begin{align*}
    \int_{G}& |
    f(\gamma^{-1}\gamma')-f(\gamma')|\,d\lambda^{r(\gamma)}(\gamma')   \\
    &=\int_{G}\int_{Q}k(a)\bigl(
    g(a,\gamma^{-1}\gamma',ac(\gamma^{-1}\gamma')) -
    g(a,\gamma',ac(\gamma')) \bigr)\,d\lambda^{r(\gamma)}(\gamma') \\
    & = \int_{G}\int_{Q} k(ac(\gamma)) \bigl(
    g(ac(\gamma),\gamma^{-1}\gamma',a(c(\gamma')) -
    g(a,\gamma',ac(\gamma') ) \bigr)
    \,d\lambda^{r(\gamma)}(\gamma')\,d\alpha(a) \\ 
    & \hskip1in + \int_{G}\int_{Q} \bigl( k(ac(\gamma))-k(a)\bigr)
    g(a,\gamma',ac(\gamma'))
    \,d\lambda^{r(\gamma)}(\gamma')\,d\alpha(a)
    \\
    \intertext{which, in view of our assumptions on $k$ and $g$, is}
    &\le \frac\epsilon2+\frac\epsilon2=\epsilon.\qedhere
  \end{align*}

\end{proof}

\begin{thm}
  \label{thm-skewprod}
  Suppose that $G$ is a \lhlc\ second countable groupoid with a Haar
  system and that $c:G\to Q$ is a continuous homomorphism into a
  locally compact group $Q$ such that the kernel $c^{-1}(e)$ is
  amenable.  Let $\ts:G\to \go\times Q$ be the map given by
  $\ts(\gamma)=(s(\gamma),c(\gamma))$.  Let $Y$ be the image of $\ts$
  in $\go\times Q$.  Then $Y$ is $G(c)$-invariant.  Suppose that 
    there are countably many $q_{n}\in Q$ such
    that
    \begin{equation*}
      \go\times Q=\bigcup_{n} q_{n}\cdot Y,
    \end{equation*}
    then $G(c)$ is amenable.  In particular, if $Y$ is open in
    $\go\times Q$, then $G(c)$ is amenable.
  If in addition, $Q$ is amenable, then so
  is $G$.
\end{thm}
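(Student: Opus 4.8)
The plan is to establish the amenability of $G(c)$ working entirely in the Borel category, invoking the equivalence of Borel and topological amenability from \cite{ren:xx13} only at the very end; the final clause about $G$ is then immediate from Proposition~\ref{prop-ren-groupoids}. The invariance of $Y$ is a direct check: an arrow of $G(c)$ with source $(s(\gamma),q)\in Y$ necessarily has the form $(qc(\gamma)^{-1},\gamma,q)$ with range $(r(\gamma),qc(\gamma)^{-1})$, and if $\eta\in G$ is chosen with $s(\eta)=s(\gamma)$ and $c(\eta)=q$ (possible since $(s(\gamma),q)\in Y$), then $\ts(\eta\gamma^{-1})=(r(\gamma),qc(\gamma)^{-1})$, so the range again lies in $Y$. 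Granting this, and granting that each translate $q_{n}\cdot Y$ is again invariant and Borel (the $Q$-action is by automorphisms and translation is a Borel isomorphism), the covering hypothesis together with Lemma~\ref{lem-borel-cover} reduces everything to a single assertion: that the reduction $G(c)\restr Y$ is Borel amenable.

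To prove that, I would exhibit $G(c)\restr Y$ as Borel equivalent to the kernel $N:=c^{-1}(e)$. Put $F:=\go\times\{e\}$. Since the unit of $G$ at $u$ has $c$-value $e$, we have $F\subseteq Y$, and the reduction of $G(c)\restr Y$ to $F$ is canonically the kernel $N$ (its arrows are exactly the $(e,\gamma,e)$ with $c(\gamma)=e$). The crucial geometric fact is that $F$ is a \emph{complete section} for $G(c)\restr Y$: given $(u,q)\in Y$, choose $\gamma$ with $s(\gamma)=u$ and $c(\gamma)=q$; then the arrow $(e,\gamma,q)$ runs from $(u,q)$ to $(r(\gamma),e)\in F$. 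I would then take the linking set $Z:=\set{g\in G(c)\restr Y: s(g)\in F}$, observe that $\gamma\mapsto(c(\gamma)^{-1},\gamma,e)$ is a Borel isomorphism of $G$ onto $Z$, and check that the commuting left $G(c)\restr Y$- and right $N$-actions (both by groupoid multiplication) are free, with $Z/N\cong Y$ via $r$ and $G(c)\restr Y\backslash Z\cong F$ via $s$. Properness of the two actions follows from Lemma~\ref{lem-proper} (freeness plus a Borel cross section for the quotient, available since our spaces are analytic), so $Z$ is a $(G(c)\restr Y,N)$-Borel equivalence in the sense of Definition~\ref{def-borel-equi}.

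Since $N$ is amenable by hypothesis, it is Borel amenable by \cite{ren:xx13}, whence Theorem~\ref{thm-borel-equiv} gives that $G(c)\restr Y$ is Borel amenable. The left $Q$-action $q\cdot(b,\gamma,a)=(qb,\gamma,qa)$ is by automorphisms carrying $Y$ to $q\cdot Y$, so each $G(c)\restr{q_{n}Y}$ is isomorphic to $G(c)\restr Y$ and hence Borel amenable; as $\set{q_{n}\cdot Y}$ is a countable cover of $\go\times Q$ by invariant Borel sets, Lemma~\ref{lem-borel-cover} makes $G(c)$ Borel amenable, and \cite{ren:xx13} then upgrades this to topological amenability. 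For the \emph{in particular} clause, when $Y$ is open the family $\set{q\cdot Y}_{q\in Q}$ is an open cover of $\go\times Q$ (for each fixed $u$ one has $Q\cdot c(s^{-1}(u))=Q$, since $e\in c(s^{-1}(u))$), so second countability yields a countable subcover and reduces to the case just treated. Finally, if $Q$ is amenable, Proposition~\ref{prop-ren-groupoids} turns the amenability of $G(c)$ into that of $G$.

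I expect the Borel-equivalence step to be the main obstacle. Verifying that $F$ meets every orbit and that $Z$ satisfies all five conditions of Definition~\ref{def-borel-equi} --- in particular the freeness and the existence (via Lemma~\ref{lem-proper}) of Borel cross sections making the actions proper --- is where the real work lies, and it is also where one must be careful that $Y$ is only analytic and that what is actually transported by Theorem~\ref{thm-borel-equiv} is Borel (not a priori topological) amenability of the kernel.
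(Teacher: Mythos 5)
Your proposal is, in substance, the paper's own proof: the paper proves (Proposition~\ref{prop-gy-equi-ker}) that $G(c)\restr Y$ is Borel equivalent to the kernel $c^{-1}(e)$, with the linking space being $G$ itself --- your $Z=\set{g\in G(c)\restr Y: s(g)\in \go\times\sset e}$ is exactly the paper's equivalence under the Borel isomorphism $\gamma\mapsto (c(\gamma)^{-1},\gamma,e)$, just with the two sides swapped --- and then runs the identical endgame: Theorem~\ref{thm-borel-equiv} transports Borel amenability from the kernel, the $Q$-action by automorphisms makes each $G(c)\restr{q_{n}\cdot Y}$ Borel amenable with $q_{n}\cdot Y$ invariant, Lemma~\ref{lem-borel-cover} assembles these, \cite{ren:xx13} upgrades Borel to topological amenability (using that $G(c)$ inherits a Haar system from $G$), Lindel\"of handles the open-$Y$ case, and Proposition~\ref{prop-ren-groupoids} yields the final clause about $G$. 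Your invariance computation for $Y$ matches the paper's. One small simplification on your side: for properness of the $G(c)\restr Y$-action on $Z$, your unit map $u\mapsto(e,u,e)$ is an explicit continuous cross section for the quotient $Z\to \go$, so Lemma~\ref{lem-proper} applies directly, whereas the paper instead identifies the transformation groupoid with $G\rtimes G$ and cites \cite{anaren:amenable00}*{Example~2.1.4(1)}.

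The one place where your justification does not hold up as stated is the kernel side: you invoke Lemma~\ref{lem-proper} for the right $N$-action on $Z$ with a Borel cross section said to be ``available since our spaces are analytic.'' Analyticity alone does not produce Borel cross sections for quotient maps --- this is precisely the delicate point (see Remark~\ref{rem-complicated}), and it is where the paper does real work: since $c^{-1}(e)$ is \emph{closed} and acts freely and properly in the topological sense, the orbit space $c^{-1}(e)\backslash G$ is \lhlc\ by \cite{muhwil:nyjm08}*{Lemma~2.6}; the quotient map is continuous with closed point-inverses and carries open sets to $\sigma$-compact (hence Borel) sets, so Arveson's cross-section theorem \cite{arv:invitation}*{Theorem~3.4.1} applies, after which one must still check that the topological-quotient and quotient-Borel structures coincide before Lemma~\ref{lem-proper} can be used. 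You correctly flag this step as the main obstacle, but the fix is the closedness of the kernel and topological properness, not analyticity. Relatedly, $Y$ is not merely analytic: since $G$ is second countable and \lhlc, it is $\sigma$-compact, so $Y=\ts(G)$ is $\sigma$-compact and hence genuinely Borel --- which is what makes $G(c)\restr Y$ a standard Borel groupoid and lets Lemma~\ref{lem-borel-cover} apply to the invariant Borel cover $\sset{q_{n}\cdot Y}$.
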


It is not hard to check that $Y$ is invariant: if
$s(a,\gamma,ac(\gamma))=(s(\gamma),ac(\gamma))\in Y$, then there is an
$\eta\in G$ such that $\ts(\eta)=(s(\gamma),ac(\gamma))$.  Hence
$s(\eta)=s(\gamma)$ and $c(\eta)=ac(\gamma)$.  But then
$\ts(\eta\gamma^{-1})=
(r(\gamma),c(\eta)c(\gamma)^{-1})=(r(\gamma),a)=r(a,\gamma,a
c(\gamma))$.

Our key tool is the following observation.

\begin{prop}
  \label{prop-gy-equi-ker}
  Let $G$, $Q$, $c$ and $Y=\ts(G)$ be as in the statement of
  Theorem~\ref{thm-skewprod}.  Then the Borel groupoids $G(c)\restr Y$
  and $c^{-1}(e)$ are Borel equivalent.
\end{prop}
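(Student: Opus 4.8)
The plan is to exhibit $G$ itself (with its standard Borel structure) as the equivalence space. Concretely, take $Z=G$, let the kernel $c^{-1}(e)$ act on the \emph{left} of $Z$ by left translation $\eta\cdot\gamma=\eta\gamma$ (defined when $s(\eta)=r(\gamma)$), with moment map the range map $r\colon G\to\go$ onto the unit space $\go$ of $c^{-1}(e)$; and let $G(c)\restr Y$ act on the \emph{right} of $Z$, with moment map $\ts\colon G\to Y$, by the rule
\[
  \gamma\cdot\bigl(c(\gamma),\beta,c(\gamma)c(\beta)\bigr)=\gamma\beta.
\]
This is the only way an element of $G(c)\restr Y$ can act: its range $(r(\beta),c(\gamma))$ must equal $\ts(\gamma)=(s(\gamma),c(\gamma))$, which forces $r(\beta)=s(\gamma)$ so that $\gamma\beta$ is defined, and then $\ts(\gamma\beta)=(s(\beta),c(\gamma)c(\beta))$ is the source of the acting element. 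Both actions are Borel since $r$, $s$, $c$, multiplication and inversion are Borel, and they are defined on all of $Z$.

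First I would check the algebraic clauses of Definition~\ref{def-borel-equi}. Freeness on the left is immediate ($\eta\gamma=\gamma$ forces $\eta=r(\gamma)$); freeness on the right holds because $\gamma\beta=\gamma$ forces $\beta=s(\gamma)$, and then the acting element is the unit of $G(c)$ at $\ts(\gamma)$. The two actions commute because $c(\eta\gamma)=c(\gamma)$ for $\eta\in c^{-1}(e)$, so the same element of $G(c)\restr Y$ acts on $\gamma$ and on $\eta\gamma$, giving $\eta\gamma\beta$ either way. The right $G(c)\restr Y$-orbit of $\gamma$ is exactly $r^{-1}(r(\gamma))$, so $r$ descends to a bijection $Z/(G(c)\restr Y)\to\go$; the left $c^{-1}(e)$-orbit of $\gamma$ is $\{\delta:s(\delta)=s(\gamma),\ c(\delta)=c(\gamma)\}$, so $\ts$ descends to a bijection $c^{-1}(e)\backslash Z\to Y$ (surjectivity onto $Y$ being the definition $Y=\ts(G)$). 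Since $G$ is a countable union of compact Hausdorff sets its Borel structure is standard, $Y$ is analytic as the image of the Borel map $\ts$, and a Borel bijection between analytic Borel spaces is a Borel isomorphism by Corollary~2 of \cite{arv:invitation}*{Theorem~3.3.4}; hence clauses (d) and (e) hold.

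It remains to prove that $Z$ is a proper $c^{-1}(e)$-space and a proper $G(c)\restr Y$-space, where I would invoke Lemma~\ref{lem-proper}. For the right $G(c)\restr Y$-action the quotient map is $r\colon G\to\go$, which has the obvious continuous, hence Borel, cross section $u\mapsto u$ sending a unit to itself, so Lemma~\ref{lem-proper} applies directly. For the left $c^{-1}(e)$-action — rewritten as the free right action $\gamma\cdot\eta=\eta^{-1}\gamma$ to match the statement of Lemma~\ref{lem-proper} — the quotient map is $\ts\colon G\to Y$, and what is needed is a Borel cross section of $\ts$.

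This last point is the one genuinely nontrivial step, and I expect it to be the main obstacle: a Borel surjection between standard Borel spaces need not admit a Borel section in general, so the existence of a section of $\ts$ is a measurable-selection problem. It is solvable here because the Borel structure of $G$ is standard: writing $G=\bigcup_n K_n$ as an increasing union of compact Hausdorff (metrizable) sets on each of which $\ts$ is continuous, and selecting, in the first $K_n$ meeting a given fibre, a canonical (e.g.\ metrically least) point, produces a Borel section. Granting this section, Lemma~\ref{lem-proper} yields properness of both actions, so all clauses of Definition~\ref{def-borel-equi} hold and $Z=G$ implements a Borel equivalence between $c^{-1}(e)$ and $G(c)\restr Y$.
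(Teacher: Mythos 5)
Your construction coincides with the paper's: the equivalence space is $Z=G$, with $c^{-1}(e)$ acting on the left by multiplication (moment map $r$) and $G(c)\restr Y$ acting on the right through $\ts$ by exactly the formula \eqref{eq:1}, and your algebraic verifications (freeness of both actions, commutation via $c(\eta\gamma)=c(\gamma)$, identification of the right-orbit space with $\go$ and of the left-orbit space with $Y$) all match. Where you genuinely diverge is in the two properness verifications, and both of your variants are sound. For the right $G(c)\restr Y$-action, the paper identifies the transformation groupoid $G\rtimes G(c)\restr Y$ with $G\rtimes G$ and quotes properness of the right action of a Borel groupoid on itself; you instead feed the tautological unit cross section $u\mapsto u$ of $r:G\to\go$ into Lemma~\ref{lem-proper}, which is equally valid and arguably more uniform. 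For the left $c^{-1}(e)$-action, the paper passes to the topological orbit space $c^{-1}(e)\backslash G$ (\lhlc\ by \cite{muhwil:nyjm08}*{Lemma~2.6}), applies the cross-section theorem \cite{arv:invitation}*{Theorem~3.4.1} to the quotient map, and then must reconcile the quotient-topology Borel structure with the quotient Borel structure; you sidestep that detour by producing a Borel section of $\ts:G\to Y$ directly, which is legitimate precisely because the fibres of $\ts$ are the left orbits and $Y$ sits inside the honest topological space $\go\times Q$. Your sketched selection ($G=\bigcup_n K_n$ with $K_n$ compact metrizable, $\ts(K_n)$ compact hence Borel, minimize $n$ over fibres, then select within $K_n$ by a Federer--Morse scheme) is in substance the proof of the cross-section theorem; you could have shortened it by citing \cite{arv:invitation}*{Theorem~3.4.1} applied to $\ts$ itself, since point preimages are closed ($\go\times Q$ is Hausdorff) and images of open sets are $\sigma$-compact, hence Borel. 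One small imprecision: your appeal to Corollary~2 of \cite{arv:invitation}*{Theorem~3.3.4} for clauses (d) and (e) of Definition~\ref{def-borel-equi} presumes the quotient Borel structures are analytic, which is not automatic; but the sections you construct repair this, since composing each section with the corresponding quotient map exhibits a Borel inverse of the induced bijection directly. So the proposal is correct: it is the paper's equivalence with the same key lemma, reached by a slightly cleaner route to properness of the left action.
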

\begin{remark}
  \label{rem-complicated}
  The proof is complicated by the fact that we are not assuming $G$ is
  Hausdorff, nor is it clear whether $c^{-1}(e)$ has a Haar system.
  Thus we cannot apply \cite{anaren:amenable00}*{Corollary~2.1.17} to
  establish that the actions are proper.  Instead, we resort to the
  definition.
\end{remark}

\begin{proof}
  We want to show that $G$ is a $(c^{-1}(e),G(c)\restr Y)$-Borel
  equivalence.  We let $c^{-1}(e)$ act on the left by multiplication
  in $G$.  (Hence the moment map  for the left action is just
  $r$.)  For the right action of $G(c)\restr Y$ we use the moment map
  $\ts$.  Thus $\eta\cdot (a,\gamma,ac(\gamma))$ makes sense only when
  $(\eta,\gamma)\in G^{(2)}$ and $c(\eta)=a$.  Then we define
  \begin{equation}
    \label{eq:1}
    \eta\cdot (c(\eta),\gamma,c(\eta\gamma)) =\eta\gamma.
  \end{equation}

  To see that $G$ is a proper right $G(c)\restr Y$-space, we need to
  see that the transformation groupoid $G\rtimes G(c)\restr Y$ for the
  right $G(c)\restr Y$-action is proper.  But the map
  \begin{equation*}
    (\eta,(c(\eta),\gamma,c(\eta\gamma))\mapsto (\eta,\gamma)
  \end{equation*}
  is clearly a bijection of $G\rtimes G(c)\restr Y$ onto $G\rtimes G$.
  It is easy to see that it is a groupoid homomorphism as well.  Since
  the right action of any Borel groupoid on itself is proper (by a
  left to right modification of
  \cite{anaren:amenable00}*{Example~2.1.4(1)}), it follows that $G$ is
  a proper $G(c)\restr Y$-space.  The action is clearly free.

  To see that the left action is proper in the Borel sense, we first
  note that it is certainly free and proper topologically.  Hence the
  orbit space is itself a \lhlc\ space by
  \cite{muhwil:nyjm08}*{Lemma~2.6}.  Hence it is a standard Borel
  space with the Borel structure coming from its topology. Let $q:G\to
  c^{-1}(e)\backslash G$ be the quotient map.  Since $q$ is continuous
  and $c^{-1}(e)$ is closed, the inverse image of points in
  $c^{-1}(e)\backslash G$ under $q$ are closed.  Since open sets in
  $G$ are $\sigma$-compact, the forward image of open sets are
  $\sigma$-compact.  Since a compact subset of a \lhlc\ space is
  Borel, so is the forward image of any open set.  Hence $q$ has a
  Borel cross section $w$ by \cite{arv:invitation}*{Theorem~3.4.1}.
  \emph{A priori} $c^{-1}(e)\backslash G$ has two Borel structures:
  the one we've been using coming from the quotient topology, and also
  the quotient Borel structure.  The quotient map $q$ is Borel in both
  cases and $w$ must also be a cross section for the potentially finer
  quotient Borel structure.  Hence both spaces are Borel isomorphic to
  the image of $w$ by \cite{arv:invitation}*{Proposition~3.4.2}.
  Hence the two Borel structures coincide on the orbit space.
  Therefore Lemma~\ref{lem-proper} applies and $G$ is a free and
  proper Borel $c^{-1}(e)$-space as required.

  If $\eta\in c^{-1}(e)$, then
  $\ts(\eta\gamma)=(s(\gamma),c(\eta\gamma))=(s(\gamma),c(\gamma))$.
  It follows that $\ts$ factors through $G/c^{-1}(e)$.  On the other
  hand, suppose that
  \begin{equation*}
    \ts(\gamma)=(s(\gamma),c(\gamma))=\ts(\eta)=(s(\eta),c(\eta)).
  \end{equation*}
  Then $c(\gamma)=c(\eta)$ and $\gamma^{-1}\eta\in c^{-1}(e)$.  Of
  course $\gamma\cdot \gamma^{-1}\eta=\eta$.  Thus $\ts$ induces a
  bijection of $G/c^{-1}(e)$ with $Y$.  Since $c^{-1}(e)$ is closed,
  it acts freely and properly on $G$.  Thus the quotient
  $G/c^{-1}(e)$ is a standard Borel space (with respect to the
  quotient Borel structure) and $\ts$ induces a Borel bijection of
  $G/c^{-1}(e)$ with the analytic Borel space $Y$.  Hence $\ts$ is a
  Borel isomorphism by \cite{arv:invitation}*{Corollary 2 of
    Theorem~3.3.4}.

  It is clear from \eqref{eq:1} that $\trr=r$ factors through
  $G/G(c)\restr Y$.  In fact, the homeomorphism of $G\rtimes
  G(c)\restr Y$ with $G\rtimes G$ induces a homemorphism of
  $G/G(c)\restr Y$ onto $G/G\cong \go$.  Hence $\trr$ certainly
  induces a Borel isomorphism as required.
\end{proof}

\begin{proof}[Proof of Theorem~\ref{thm-skewprod}]
  If $Y$ is open, then $\sset{q\cdot Y}_{q\in Q}$ is an open cover of
  the second countable set $\go\times Q$.  By a theorem of Lindel\"of,
  it has a countable
  subcover; so it suffices to prove the first statement. 

  Since $G$ is $\sigma$-compact
  and $\ts$ is continuous, $Y$ is also $\sigma$-compact and hence
  Borel.  This ensures that $G(c)\restr Y$ is standard as a Borel
  space. Since $c^{-1}(e)$ is assumed to be amenable, it is Borel
  amenable.  Hence $G(c)\restr Y$ is Borel amenable by
  Proposition~\ref{prop-gy-equi-ker}.  On the other hand $G(c)\restr
  {q_{n}\cdot Y}=q_{n}\cdot \bigl(G\restr Y\bigr)$.  Hence each
  $G(c)\restr {q_{n}\cdot Y}$ is Borel amenable.  Since $Q$ acts by
  automorphisms, each $q_{n}\cdot Y$ is invariant.  Now $G(c)$ is
  Borel amenable from Lemma~\ref{lem-borel-cover}.  Since $G(c)$ has a
  Haar system if $G$ does, it follows from
  \cite{ren:xx13}*{Corollary~2.15} that $G$ is amenable.

  The last assertion follows from
  Proposition~\ref{prop-ren-groupoids}.
\end{proof}

As special cases of Theorem~\ref{thm-skewprod}, we obtain the cocycle
results from \cite{anaren:amenable00} and \cite{spi:tams14} mentioned
in the introduction.  The following result strengthens
\cite{spi:tams14}*{Theorem~9.3} by removing the hypotheses that $G$ be
\'etale. 

\begin{cor}
  \label{cor-discrete}
  Suppose that $G$ is a \lhlc\ second countable groupoid with a Haar
  system and that $c:G\to Q$ is a continuous homomorphism into a
  discrete group $Q$ such that $c^{-1}(e)$ is amenable.  Then $G(c)$
  is amenable.  If $Q$ is amenable, then so is $G$.
\end{cor}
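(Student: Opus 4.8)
The plan is to realize Corollary~\ref{cor-discrete} as the special case of Theorem~\ref{thm-skewprod} in which $Q$ is discrete, so that all the substantive work is already done and only the hypothesis on the cover of $\go\times Q$ remains to be checked. I would carry this out by showing directly that $Y=\ts(G)$ is open, and then invoking the ``in particular'' clause of the theorem.

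To see that $Y$ is open I would exploit discreteness of $Q$: each singleton $\sset q$ is open, so $c^{-1}(q)$ is open in $G$ by continuity of $c$. The decisive point is that the source map $s:G\to\go$ is open; this is exactly where the Haar system is used, since a \lhlc\ groupoid admitting a Haar system has open range map, and, inversion being a homeomorphism, $s$ is open as well. Hence $s\bigl(c^{-1}(q)\bigr)$ is open in $\go$ for every $q$. Because $\go\times\sset q$ is clopen in $\go\times Q$ and
\begin{equation*}
  Y\cap(\go\times\sset q)=s\bigl(c^{-1}(q)\bigr)\times\sset q,
\end{equation*}
it follows that $Y$ is open in $\go\times Q$.

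With $Y$ open, the ``in particular'' assertion of Theorem~\ref{thm-skewprod} immediately yields that $G(c)$ is amenable: the translates $\sset{q\cdot Y}_{q\in Q}$ form an open cover of the second countable space $\go\times Q$ (here one uses that $Q$, being discrete in this second countable setting, is countable, so $\go\times Q$ is second countable), and Lindel\"of's theorem extracts a countable subcover. If in addition $Q$ is amenable, then the final sentence of Theorem~\ref{thm-skewprod}, which rests on Proposition~\ref{prop-ren-groupoids}, gives the amenability of $G$ itself.

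I expect no genuine obstacle, since the real content lives in Theorem~\ref{thm-skewprod} and in the Borel-equivalence Proposition~\ref{prop-gy-equi-ker}; the only step requiring a moment's care is the openness of $s$. It is worth recording an even more elementary route to the covering hypothesis that sidesteps openness altogether: every unit $u\in\go$ satisfies $c(u)=e$, so $(u,e)\in Y$ and hence $q\cdot Y\supseteq\go\times\sset q$ for each $q\in Q$. Enumerating the countable group as $Q=\sset{q_n}$ then gives
\begin{equation*}
  \go\times Q=\bigcup_{n} q_n\cdot Y,
\end{equation*}
which is precisely the hypothesis of the main part of Theorem~\ref{thm-skewprod}.
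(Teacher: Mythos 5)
Your proof is correct and takes essentially the same approach as the paper: its proof of Corollary~\ref{cor-discrete} likewise shows $Y=\ts(G)=\bigcup_{a\in Q}s\bigl(c^{-1}(a)\bigr)\times\sset{a}$ is open, using openness of $s$ and of each $c^{-1}(a)$, and then invokes the open-$Y$ clause of Theorem~\ref{thm-skewprod}. Your closing observation that $(u,e)\in Y$ for every unit $u\in\go$, so that $q\cdot Y\supseteq\go\times\sset{q}$ and the countable-cover hypothesis holds directly for countable $Q$, is a pleasant even more elementary variant, but the substance matches the paper's argument.
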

\begin{proof}
  It suffices to see that $Y=\ts(G)$ is open in $\go\times Q$.  But
  $s$ is open and
  \begin{equation*}
    \ts(G)=\bigcup_{a\in Q} s(c^{-1}(a))\times \sset a,
  \end{equation*}
  which is open since each $c^{-1}(a)$ is open.
\end{proof}

 The result
  \cite{anaren:amenable00}*{Theorem~5.3.14} mentioned in the
  introduction establishes the amenability of a measured groupoid $G$
  which admits a strongly surjective Borel homomorphism onto a Borel
  groupoid with amenable kernel and range.  The following result is
  similar, but concerns topological instead of measure amenability.
  Moreover, it is more restrictive since it assumes that the range is
  a group rather than a groupoid.
\begin{cor}
  \label{cor-str-surj}
  Suppose that $G$ is a \lhlc\ second countable groupoid with a Haar
  system and that $c:G\to Q$ is a strongly surjective continuous
  homomorphism into a group $Q$ such that $c^{-1}(e)$ is amenable.
  Then $G(c)$ is amenable.  If $Q$ is amenable, then so is $G$.
\end{cor}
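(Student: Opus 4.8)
The plan is to deduce this immediately from Theorem~\ref{thm-skewprod}, the point being that strong surjectivity forces the image $Y=\ts(G)$ to be all of $\go\times Q$, so that the covering hypothesis of that theorem holds in the most trivial possible way.

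First I would translate the hypothesis from range fibers to source fibers. Strong surjectivity means $c(G^{u})=Q$ for every $u\in\go$. Since $G_{u}=(G^{u})^{-1}$, since $c(\gamma^{-1})=c(\gamma)^{-1}$, and since $Q$ is a group, it follows that $c(G_{u})=c(G^{u})^{-1}=Q$ for every $u$ as well. I would record this passage explicitly, because it is the only place where one must be careful about the direction of the fibers.

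Next I would compute $Y$. Given any $(u,a)\in\go\times Q$, the equality $c(G_{u})=Q$ produces a $\gamma\in G$ with $s(\gamma)=u$ and $c(\gamma)=a$, so that $\ts(\gamma)=(s(\gamma),c(\gamma))=(u,a)$. Hence every point of $\go\times Q$ lies in $Y$, that is, $Y=\go\times Q$. With this in hand the covering hypothesis of Theorem~\ref{thm-skewprod} is satisfied trivially: taking the single element $q_{1}=e$ gives $q_{1}\cdot Y=Y=\go\times Q$ (equivalently, $Y$ is open since it is the whole space). Therefore Theorem~\ref{thm-skewprod} yields that $G(c)$ is amenable, and its final assertion (i.e.\ Proposition~\ref{prop-ren-groupoids}) gives that $G$ is amenable once $Q$ is.

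There is essentially no hard step here; all the substance of the corollary lives in Theorem~\ref{thm-skewprod}, and the present result simply recognizes that strong surjectivity is precisely the extreme case $Y=\go\times Q$ of the covering hypothesis. The only thing to watch is the bookkeeping between range and source fibers, which is why I would isolate the implication $c(G^{u})=Q\Rightarrow c(G_{u})=Q$ before identifying $Y$.
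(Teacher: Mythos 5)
Your proposal is correct and matches the paper's own proof: the paper likewise notes that strong surjectivity gives $c(G_{u})=Q$ for all $u$, hence $\ts$ is surjective (i.e.\ $Y=\go\times Q$), and then invokes Theorem~\ref{thm-skewprod}. Your only addition is spelling out the range-to-source fiber translation via $G_{u}=(G^{u})^{-1}$ and the trivial covering by $q_{1}=e$, which the paper leaves implicit.
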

\begin{proof}
Since $c$ is strongly surjective,  we must have $c(G_{u})=Q$ for
  all $u$. It follows that $\ts$
  is surjective, and the result follows from
  Theorem~\ref{thm-skewprod}.
\end{proof}

\section{Application to Semigroup Actions}
\label{sec:semigroup action}

Our results apply nicely to semigroup action groupoids.

\begin{definition}
  Let $X$ be a set and $P$ be a semigroup with identity $e$. A
  \emph{right (partial) action of $P$ on $X$} consists of a subset
  $X*P$ of $X\times P$ and a map $T: X* P \to X$
   sending $(x,m)$ to $x\cdot
  m$, such that
  \begin{enumerate}
  \item for all $x \in X$, $(x,e)\in X*P$ and $x\cdot e=x$;
  \item for all $(x,m,n)\in X\times P\times P$, $(x,mn)\in X*P$ if and
    only if $(x,m)\in X*P$ and $(x\cdot m,n)\in X*P$; if this holds,
    $(x\cdot m) \cdot n=x\cdot (mn)$.
  \end{enumerate}
  For all $m\in P$, we define $U(m)=\{x: (x,m)\in X*P\}$ and $V(m)=
  \{x\cdot m: (x,m)\in X*P\}$ and $T_m:U(m)\to V(m)$ such that
  $T_mx=x\cdot m$.  The triple $(X,P,T)$ will be called a
  \emph{semigroup action}.
\end{definition}

This notion generalizes that of singly generated dynamical system
(SGDS) in the sense of \cite{ren:otm00}, which is the case $P=\N$. A
SGDS is given by a single map $T$ from a subset
$\operatorname{dom}(T)$ of $X$ to another subset
$\operatorname{ran}(T)$ of
$X$. For $n\in\N$, we let $U(n)=\operatorname{dom}(T^n)$ and define
$x\cdot n=T^n(x)$ for $x\in U(n)$.\\

Let us define the binary relation on $P$: $m\le m'$ if and only if
there exists $n\in P$ such that $m'=mn$. Our axioms imply that $m\le
m'\Rightarrow U(m')\subset U(m)$.

The next notion is important because it will allow a simple
construction of a groupoid from a semigroup action.

\begin{definition} \label{def-directed}
  Let us say that a semigroup action $(X,P,T)$ is \emph{directed} if
  for all pairs $(m,n)\in P\times P$ such that $U(m)\cap U(n)$ is
  non-empty, there exists an upper bound $r\in P$ of $m$ and $n$ such
  that $U(m)\cap U(n)=U(r)$.
\end{definition}
Note that the equality can be replaced by the inclusion $U(m)\cap
U(n)\subset U(r)$. Here are two rather different situations where the
action is directed.
\begin{example}
  Assume that the action is everywhere defined, i.e.  $X*P=X\times
  P$. Then the action is directed if and only if the semigroup $P$ is
  directed, in the sense that for all $m,n\in P$, there exists $r\in
  P$ such that $m,n\le r$; we then say that $r$ is a common upper
  bound, or c.u.b.\ for short, of $m$ and $n$. If $P$ is a
  subsemigroup of a group $Q$, meaning that
  \begin{equation}\label{eq:subsemi}
    P\subset Q, \quad PP\subset P \quad \text{and}\quad e\in P.
  \end{equation}
  then, $P$ is directed if and and only if it satisfies the \emph{Ore
    condition} $P^{-1}P\subset PP^{-1}$. In such a case, $P$ is called
  an \emph{Ore semigroup}.
\end{example}

\begin{example}\label{quasi-lattice ordered} Assume that $P$ is
  \emph{quasi-lattice ordered}. This means that $P$ is a subsemigroup
  of a group $Q$ such that $P\cap P^{-1}=\{e\}$ and whenever two
  elements $m,n\in P$ have a c.u.b. in $P$, they have a least upper
  bound, or l.u.b.\ for short, $m\vee n$ (the original definition of
  A. Nica in \cite{nic:jot92} contains the additional assumption that
  every element of $Q$ which has an upper bound in $P$ has a least
  upper bound in $P$). Then, the action of $P$ on $X=P$ given by
  \begin{equation*}
    U(n)=\set{x\in P: n\le x}\quad \text{and}\quad x\cdot n=n^{-1}x
  \end{equation*}
  is directed.
\end{example}
 
If $(X,P,T)$ is directed, then the relation $x\sim y$ if and only if
there exist $(m,n)\in P\times P$ such that $x\cdot m=y\cdot n$ is
transitive and hence an equivalence relation. We denote by $[x]$ the
equivalence class of $x$.

Given $P$ a subsemigroup of a group $Q$ and a directed semigroup
action $(X,P,T)$, we define $G(X,P,T)$ as the set of triples $(x,q,y)$
in $X \times Q\times X$ such that there exist $m,n\in P$ with
$q=mn^{-1}$, $x\in U(m)$, $y\in U(n)$ and $x\cdot m=y\cdot n$.

\begin{lemma} Assume that $P$ is a subsemigroup of a group $Q$ and
  that $(X,P,T)$ is a directed action. Then $G(X,P,T)$ is a
  subgroupoid of $X\times Q\times X$, equipped with its natural
  structure of a groupoid over $X$.
\end{lemma}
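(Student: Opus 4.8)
The plan is to verify directly the three defining closure properties of a subgroupoid inside the ambient groupoid $X\times Q\times X$. Recall that $X\times Q\times X$ carries its natural structure of a groupoid over $X$: the range and source of $(x,q,y)$ are $x$ and $y$, two elements are composable exactly when the source of the first equals the range of the second, multiplication is $(x,q,y)(y,q',z)=(x,qq',z)$, inversion is $(x,q,y)^{-1}=(y,q^{-1},x)$, and the units are the triples $(x,e,x)$, identified with $X$. To prove the lemma it therefore suffices to show that $G:=G(X,P,T)$ contains all units, is closed under inversion, and is closed under the partially defined multiplication.

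The unit and inverse cases are immediate from the symmetry of the defining conditions. For a unit $(x,e,x)$ one takes $m=n=e$: then $x\in U(e)=X$, $q=ee^{-1}=e$ and $x\cdot e=x=x\cdot e$. For inversion, if $(x,q,y)\in G$ is witnessed by $m,n\in P$ with $q=mn^{-1}$, $x\in U(m)$, $y\in U(n)$ and $x\cdot m=y\cdot n$, then exactly the same data witness $(y,q^{-1},x)=(x,q,y)^{-1}\in G$ after interchanging the roles of $(m,x)$ and $(n,y)$.

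The substance of the argument is closure under multiplication, and this is where I expect the only real difficulty. Suppose $(x,q,y)$ and $(y,q',z)$ lie in $G$, witnessed respectively by $m,n\in P$ (so $q=mn^{-1}$, $x\in U(m)$, $y\in U(n)$, $x\cdot m=y\cdot n$) and by $p,s\in P$ (so $q'=ps^{-1}$, $y\in U(p)$, $z\in U(s)$, $y\cdot p=z\cdot s$). Since $y\in U(n)\cap U(p)$, this intersection is non-empty, so directedness (Definition~\ref{def-directed}) supplies an upper bound $r\in P$ of $n$ and $p$ with $U(n)\cap U(p)=U(r)$; write $r=na=pb$ with $a,b\in P$. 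Since $a=n^{-1}r$ and $b^{-1}=r^{-1}p$, a short group computation gives $n^{-1}p=ab^{-1}$, whence
\[
  qq'=m\,n^{-1}p\,s^{-1}=m\,a\,b^{-1}\,s^{-1}=(ma)(sb)^{-1}.
\]
I claim that $ma$ and $sb$ are the required witnesses for $(x,qq',z)\in G$.

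The remaining verifications all flow from the partial-action axiom (b). Because $y\in U(r)=U(na)$, the axiom gives $y\cdot n\in U(a)$ and $y\cdot r=(y\cdot n)\cdot a$; likewise $y\in U(pb)$ gives $y\cdot p\in U(b)$ and $y\cdot r=(y\cdot p)\cdot b$. Then $x\cdot m=y\cdot n\in U(a)$ together with $x\in U(m)$ yields $x\in U(ma)$, and symmetrically $z\cdot s=y\cdot p\in U(b)$ together with $z\in U(s)$ yields $z\in U(sb)$. Finally, using associativity of the action,
\[
  x\cdot(ma)=(x\cdot m)\cdot a=(y\cdot n)\cdot a=y\cdot r=(y\cdot p)\cdot b=(z\cdot s)\cdot b=z\cdot(sb),
\]
so all four defining conditions hold for the data $(ma,sb)$, giving $(x,qq',z)\in G$. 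The main obstacle is precisely the production of $r$: directedness is exactly the hypothesis that lets one replace the two independent witnesses sitting over $y$ by a single common one, which is what makes the factorization $qq'=(ma)(sb)^{-1}$ survive every domain constraint. Once this is in place, associativity, inverses, and the unit laws for $G$ are inherited from the ambient groupoid $X\times Q\times X$, completing the verification that $G$ is a subgroupoid over $X$.
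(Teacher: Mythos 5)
Your proof is correct and follows essentially the same route as the paper: closure under multiplication is obtained by using directedness to produce $a,b\in P$ with $na=pb$, then chaining $x\cdot(ma)=y\cdot(na)=y\cdot(pb)=z\cdot(sb)$ and checking the group identity $(ma)(sb)^{-1}=(mn^{-1})(ps^{-1})$. Your write-up is in fact somewhat more careful than the paper's, since you explicitly verify the domain conditions $x\in U(ma)$, $z\in U(sb)$ via axiom (b) and check the units, which the paper leaves implicit.
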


\begin{proof}
  Suppose that $(x,q,y)$ belongs to $G$: there exist $m,n\in P$ such
  that $q=mn^{-1}$ and $x\cdot m=y\cdot n$. Then $(y,q^{-1},x)$ also
  belongs to $G$ because $q^{-1}=nm^{-1}$ and $y\cdot n=x\cdot
  m$. Suppose that $(x,s,y)$ and $(y,t,z)$ belong to $G$: there exist
  $m,n,p,q\in P$ such that $s=mn^{-1}, t=pq^{-1}$, $x\cdot m=y\cdot n$
  and $y\cdot p=z\cdot q$. Since the action is directed, there exist
  $(a,b)\in P\times P$ such that $na=pb$. Then $x\cdot ma=y\cdot
  na=y\cdot pb=z\cdot qb$. Since $(ma)(qb)^{-1}=(mn^{-1})(pq^{-1})$,
  $(x,st,y)$ belongs to $G$.
\end{proof}
\begin{definition} Let $(X,P,T)$ be a directed semigroup action, where
  $P$ is a subsemigroup of a group $Q$. The groupoid $G(X,P,T)$
  associated with $(X,P,T)$ is called the \emph{semidirect product
    groupoid} (or semidirect product for short) of the action. It
  carries the canonical cocycle $c:G(X,P,T)\to Q$ given by
  $c(x,q,y)=q$.
\end{definition}

\begin{remark} When $X$ is reduced to a point, the lemma says that
  $PP^{-1}$ is a group if $P$ satisfies the Ore condition
  $P^{-1}P\subset PP^{-1}$; clearly, this condition is also necessary.

\end{remark}


We next make the following topological assumptions:

\begin{definition} We shall say that a semigroup action $(X,P,T)$,
  where $P$ is a subsemigroup of a group $Q$, is \emph{locally
    compact} if
  \begin{enumerate}
  \item $X$ is a locally compact Hausdorff space;
  \item $Q$ is a discrete group;
  \item for all $m\in P$, $U(m)$ and $V(m)$ are open subsets of $X$
    and $T_m: U(m)\to V(m)$ is a local homeomorphism.
  \end{enumerate}
\end{definition}

In the following, we always assume that the semigroup action $(X,P,T)$
is locally compact.

Given $m,n\in P$ and $A,B$ subsets of $X$, we define
\begin{equation*}
  Z(A,m,n,B):=\set{(x,mn^{-1},y)\in G: \text{$x\in A$, $y\in B$ and $ x\cdot m=y\cdot n$}}
\end{equation*}
Let $\mathcal B$ be the family of subsets $Z(U,m,n,V)$, where $U$ and
$V$ are open subsets of $X$.

\begin{lemma} The family $\mathcal B$ is a base for a topology
  $\mathcal T$ on $G$.
\end{lemma}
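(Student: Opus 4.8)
The plan is to verify that $\mathcal{B}=\set{Z(U,m,n,V)}$ satisfies the standard criterion for a base: every point of $G$ lies in some member of $\mathcal{B}$, and whenever a point $g$ lies in the intersection of two basic sets $Z(U_1,m_1,n_1,V_1)\cap Z(U_2,m_2,n_2,V_2)$, there is a third basic set containing $g$ and contained in the intersection. The first condition is immediate: if $g=(x,mn^{-1},y)\in G$ arises from the data $m,n$, then $g\in Z(X,m,n,X)$, and $X$ is open in itself; so $\mathcal{B}$ covers $G$.

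The substance is in the intersection property. Suppose $g=(x,q,y)$ lies in both $Z(U_1,m_1,n_1,V_1)$ and $Z(U_2,m_2,n_2,V_2)$. The key structural point I would exploit is that $g$ records a single element of $Q$, namely $q=m_1n_1^{-1}=m_2n_2^{-1}$, so the two presentations $(m_1,n_1)$ and $(m_2,n_2)$ describe the same groupoid element but may use different factorizations. I would use directedness to reconcile them: since $x\in U(m_1)\cap U(m_2)$ is nonempty (it contains $x$), there is an upper bound $r$ with $U(m_1)\cap U(m_2)=U(r)$, and writing $m_1 a=r=m_2 b$ for suitable $a,b\in P$ one checks that the equalities $x\cdot m_1=y\cdot n_1$ and $x\cdot m_2=y\cdot n_2$ propagate to a common factorization $(r, n_1 a)=(r,n_2 b)$ of $q$ (using $qa=m_1n_1^{-1}a$, etc.). The candidate basic set is then $Z(U_1\cap U_2, r, n_1a, V_1\cap V_2)$: it contains $g$ because $x\in U_1\cap U_2$, $y\in V_1\cap V_2$, and the common-factorization identity holds; and it is contained in both original sets because membership in $Z(U_i,m_i,n_i,V_i)$ is determined by the open neighborhoods $U_i,V_i$ together with the relation $x'\cdot m_i = y'\cdot n_i$, which is forced by the finer factorization through $r$.

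The main obstacle I expect is precisely this compatibility of factorizations: I must show that a point $(x',q,y')$ lying in $Z(U_1\cap U_2, r, n_1 a, V_1\cap V_2)$ genuinely lies in $Z(U_1,m_1,n_1,V_1)$, i.e. that $x'\cdot r = y'\cdot(n_1 a)$ implies $x'\cdot m_1 = y'\cdot n_1$. This is where the axioms of a partial action do the work: from $r=m_1 a$ and $x'\in U(m_1)$ with $x'\cdot r$ defined, axiom (ii) gives $(x'\cdot m_1)\cdot a = x'\cdot r = y'\cdot(n_1 a)=(y'\cdot n_1)\cdot a$, and one then needs that right translation by $a$ (the map $T_a$) separates these points. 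This last cancellation is the delicate point; I would handle it by observing that on the relevant domain the relation $x'\cdot m_1 = y'\cdot n_1$ is equivalent to $(x',q,y')\in G$ together with the already-established localization, rather than by assuming $T_a$ is injective. Once the intersection property is checked, $\mathcal{B}$ is a base for a unique topology $\mathcal{T}$, which is the claim.
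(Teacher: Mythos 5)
You set up the intersection condition exactly as the paper does---directedness gives $r=m_1a=m_2b$ with $U(r)=U(m_1)\cap U(m_2)$, and the common refinement $(r,n_1a)$ with $n_1a=n_2b=q^{-1}r$ presents $q$---but your candidate basic set $Z(U_1\cap U_2,\,r,\,n_1a,\,V_1\cap V_2)$ is genuinely too big, and the repair you sketch does not exist. Membership of $(x',q,y')$ in your candidate gives only $(x'\cdot m_1)\cdot a=(y'\cdot n_1)\cdot a$, and since $T_a$ is merely a \emph{local} homeomorphism this does not imply $x'\cdot m_1=y'\cdot n_1$: these may be distinct points identified by $T_a$. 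Your proposed way out---that ``on the relevant domain'' the relation $x'\cdot m_1=y'\cdot n_1$ is equivalent to $(x',q,y')\in G$ together with the already-established localization---is false on both counts: $(x',q,y')\in G$ asserts only that \emph{some} pair $(m,n)$ with $q=mn^{-1}$ satisfies $x'\cdot m=y'\cdot n$, not the specific pair $(m_1,n_1)$; and no localization has been established, because you never shrank $U_1\cap U_2$ or $V_1\cap V_2$. Concretely, let $P=\N$ act by a non-injective local homeomorphism $T$ (say a one-sided shift), take $q=0$, and intersect $Z(U,0,0,V)$ (whose elements are the diagonal points $(x,0,x)$) with $Z(U',1,1,V')$; your candidate is $Z(U\cap U',1,1,V\cap V')$, which contains points $(x',0,y')$ with $Tx'=Ty'$ but $x'\ne y'$, and these lie in neither $Z(U,0,0,V)$ nor, in general, the intersection.

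The missing step is precisely where the paper uses the standing hypothesis that the action is by local homeomorphisms: choose open neighborhoods $W$ of $x\cdot m_1=y\cdot n_1$ on which $T_a$ is injective and $W'$ of $x\cdot m_2=y\cdot n_2$ on which $T_b$ is injective, and cut the candidate down to $Z(U'',r,n_1a,V'')$ with $U''=U_1\cap U_2\cap T_{m_1}^{-1}(W)\cap T_{m_2}^{-1}(W')$ and $V''=V_1\cap V_2\cap T_{n_1}^{-1}(W)\cap T_{n_2}^{-1}(W')$. For $(x',q,y')$ in this smaller set, both $x'\cdot m_1$ and $y'\cdot n_1$ lie in $W$, so injectivity of $T_a$ on $W$ converts $(x'\cdot m_1)\cdot a=(y'\cdot n_1)\cdot a$ into $x'\cdot m_1=y'\cdot n_1$, and the symmetric argument with $W'$ and $T_b$ handles containment in the second basic set (note that \emph{both} factorizations need their own injectivity neighborhood; your write-up addresses only the first). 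Your covering step and the algebra producing the common presentation $(r,n_1a)$ of $q$ are correct and agree with the paper; the gap is that avoiding the local-injectivity shrinkage, as you explicitly set out to do, cannot succeed.
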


\begin{proof} First, it is clear that this family covers $G$. Second,
  let $(x,q,y)$ be a point in the intersection of $Z(U,m,n,V)$ and
  $Z(U',m',n',V')$. We are going to find $Z(U'',m'',n'',V'')$
  containing the point $(x,q,y)$ and contained in the
  intersection. There exists $(a,a')\in P\times P$ such that $m
  a=m'a'$ is an upper bound $r$ of $m$ and $m'$ and $U(r)=U(m)\cap
  U(m')$. Then $na=n'a'$ is an upper bound $s$ of $n$ and $n'$ and
  $U(s)=U(n)\cap U(n')$. Let $W$ be an open neighborhood of $x\cdot
  m=y\cdot n$ on which $T_a$ is injective. Similarly, let $W'$ be an
  open neighborhood of $x\cdot m'=y\cdot n'$ on which $T_{a'}$ is
  injective. We let $U''=U\cap U'\cap T_m^{-1}(W)\cap
  T_{m'}^{-1}(W')$, $V''=V\cap V'\cap T_n^{-1}(W)\cap
  T_{n'}^{-1}(W')$, $m''=r$ and $n''=s$. Since $x\in U''$, $y\in V''$
  and $x\cdot r=y\cdot s$, $(x,q,y)$ belongs to
  $Z(U'',m'',n'',V'')$. If $(x',q,y')$ belongs to
  $Z(U'',m'',n'',V'')$, $x'\in U\cap U'$, $y'\in V\cap V'$ and
  $x'\cdot ma=y'\cdot na$. Since $x'\cdot m$ and $y'\cdot n$ belong to
  $W$ on which $T_a$ is injective, $x'\cdot m=y'\cdot n$. Therefore
  $(x',q,y')$ belongs to $Z(U,m,n,V)$. Similarly, the equality
  $x'\cdot m'a'=y'\cdot n'a'$ implies the equality $x'\cdot m'=y'\cdot
  n'$ because $x'\cdot m'$ and $y'\cdot n'$ belong to $W'$ on which
  $T_{a'}$ is injective.
\end{proof}

\begin{remark}
  The sets $Z(U,m,n,V)$, where $U$ and $V$ are open subsets of $X$
  such that $U\subset U(m), V\subset U(n)$ and ${T_m}_{|U}$ and
  ${T_n}_{|V}$ injective form a subbase of $\mathcal B$.
\end{remark}

\begin{lemma} \label{lem-above} The topology $\mathcal T$ of $G$ is
  finer than the product topology of $X\times Q\times X$ but it agrees
  with it on the sets $Z(A,m,n,B)$, where $A,B$ are subsets of $X$ and
  $m,n\in P$.
\end{lemma}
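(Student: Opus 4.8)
The statement has two halves, and I would treat them separately. That $\mathcal T$ is finer than the subspace topology inherited from $X\times Q\times X$ is essentially immediate. Since $Q$ is discrete, a basic open set of the product topology meeting $G$ has the form $(U\times\{q\}\times V)\cap G$ with $U,V$ open in $X$. Given a point $(x,q,y)$ in it, choose $m,n\in P$ witnessing membership in $G$, so that $x\in U(m)$, $y\in U(n)$, $x\cdot m=y\cdot n$ and $q=mn^{-1}$. Then $Z(U,m,n,V)\in\mathcal B$ contains $(x,q,y)$ and is visibly contained in $(U\times\{q\}\times V)\cap G$. Hence every product-open set is $\mathcal T$-open.

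For the second half, fix $A,B\subseteq X$ and $m,n\in P$, write $Z_0=Z(A,m,n,B)$ and $q=mn^{-1}$. By the first half the $\mathcal T$-subspace topology on $Z_0$ is already finer than the product-subspace topology, so it remains to prove the reverse: that the trace on $Z_0$ of each basic set $Z(U',m',n',V')$ is open in the product topology of $Z_0$. If $m'n'^{-1}\ne q$ this trace is empty, since every point of $Z_0$ has middle coordinate $q$ and $Q$ is discrete; so I may assume $m'n'^{-1}=q=mn^{-1}$. The trace is then $\{(x,q,y)\in Z_0: x\in U',\ y\in V',\ x\cdot m'=y\cdot n'\}$, and the real content is to show that, near any of its points, the constraint $x\cdot m'=y\cdot n'$ is \emph{automatically forced} by the constraint $x\cdot m=y\cdot n$ that already holds throughout $Z_0$.

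This forcing is the heart of the proof and the step I expect to be most delicate; I would handle it exactly as in the proof that $\mathcal B$ is a base. Fix $(x_0,q,y_0)$ in the trace. Since $x_0\in U(m)\cap U(m')$, directedness (Definition~\ref{def-directed}) provides an upper bound $r=ma=m'a'$ of $m$ and $m'$ with $a,a'\in P$ and $U(r)=U(m)\cap U(m')$; the identity $mn^{-1}=m'n'^{-1}$ then forces $na=n'a'$, so $s:=na=n'a'$ is a single element of $P$. Because $T_{a'}$ is a local homeomorphism and $w_0':=x_0\cdot m'=y_0\cdot n'$ lies in $U(a')$, I can choose an open $W'\subseteq U(a')$ containing $w_0'$ on which $T_{a'}$ is injective. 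I then set $U:=U'\cap U(m')\cap T_{m'}^{-1}(W')$ and $V:=V'\cap U(n')\cap T_{n'}^{-1}(W')$, which are open neighborhoods of $x_0$ and $y_0$ by continuity of $T_{m'}$ and $T_{n'}$.

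Finally I would verify that $(U\times\{q\}\times V)\cap Z_0\subseteq Z(U',m',n',V')$. For such a point $(x,q,y)$ one has $x\in U(m)\cap U(m')=U(r)$, while $y\in U(n')$ with $y\cdot n'\in W'\subseteq U(a')$ gives $y\in U(n'a')=U(s)=U(na)$; the axioms of the action then yield $x\cdot m\in U(a)$ and $y\cdot n\in U(a)$. The computations $(x\cdot m')\cdot a'=x\cdot r=(x\cdot m)\cdot a$ and $(y\cdot n')\cdot a'=y\cdot s=(y\cdot n)\cdot a$, combined with $x\cdot m=y\cdot n$, give $(x\cdot m')\cdot a'=(y\cdot n')\cdot a'$. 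Since both $x\cdot m'$ and $y\cdot n'$ lie in $W'$, injectivity of $T_{a'}$ there forces $x\cdot m'=y\cdot n'$, so $(x,q,y)\in Z(U',m',n',V')$. The only genuine obstacle is the domain bookkeeping, namely checking that each of the products $x\cdot(m'a')$, $y\cdot(n'a')$, and so on is legitimately defined; once the injectivity trick for cancelling $a'$ is in place, the remainder is routine.
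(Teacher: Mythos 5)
Your proof is correct and takes essentially the same approach as the paper: the first half by observing that each rectangle $(U\times\{q\}\times V)\cap G$ is a union of basic sets, and the second half by using directedness to produce $r=ma=m'a'$ with $U(r)=U(m)\cap U(m')$ (whence $s=na=n'a'$) and then cancelling $a'$ via injectivity of $T_{a'}$ on a small neighborhood $W'$ to force $x\cdot m'=y\cdot n'$ from $x\cdot m=y\cdot n$. The only difference is a harmless streamlining: you correctly note that the paper's second injectivity neighborhood $W$ for $T_a$ is not needed, since the relation $x\cdot m=y\cdot n$ already holds throughout $Z(A,m,n,B)$.
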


\begin{proof} The intersection of a rectangle $U\times\{q\}\times V$
  with $G$, where $U,V$ are open subsets of $X$ is a union of elements
  of $\mathcal B$. Therefore, the topology $\mathcal T$ is finer than
  the product topology. Let $A,B$ be subsets of $X$ and $m,n\in P$.
  Let $(x,q,y)$ be a point of $Z(A,m,n,B)$. Let $Z(U,m',n',V)\cap
  Z(A,m,n,B)$ be a basic neighborhood of $(x,q,y)$ in
  $Z(A,m,n,B)$. Just as in the proof of the previous lemma, we
  introduce $(a,a')\in P\times P$ such that $ma=m'a'$ (denoted by $r$)
  and $U(r)=U(m)\cap U(m')$, $s=na=n'a'$, and we choose and open
  neighborhood $W$ of
  $x\cdot m=y\cdot n$ on which $T_a$ is injective, and an open
  neighborhood $W'$  of $x\cdot m'=y\cdot n'$ on which $T_{a'}$ is
  injective. We define $U'=U\cap T_m^{-1}(W)\cap T_{m'}^{-1}(W')$ and
  $V'=V\cap T_n^{-1}(W)\cap T_{n'}^{-1}(W')$. Suppose that $(x',q,
  y')$ belongs to $(U'\times\{q\}\times V')\cap Z(A,m,n,B)$. Then
  $x'\cdot m'a'=x'\cdot ma=y'\cdot na=y'\cdot n'a'$. We deduce as
  before that $x'm'=y'n'$. Therefore $(x',q, y')$ belongs to
  $Z(U,m',n',V)$.
\end{proof}

\begin{prop}
  \label{prop-semidirect-product-groupoid} Let $(X,P,T)$ be a
  semigroup action as above. We endow $G(X,P,T)$ with the topology
  $\mathcal T$. Then,
  \begin{enumerate}
  \item $G(X,P,T)$ is an \'etale locally compact Hausdorff groupoid;
  \item the canonical cocycle $c: G(X,P,T)\to Q$ is continuous.
  \end{enumerate}
\end{prop}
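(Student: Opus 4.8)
My plan is to treat the two assertions in order of increasing difficulty, extracting the Hausdorff property and the continuity of $c$ essentially for free from the lemmas already in hand, then concentrating on the \'etale structure, from which local compactness will follow.

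First, for the Hausdorff property I will invoke Lemma~\ref{lem-above}: the topology $\mathcal T$ is finer than the product topology on $X\times Q\times X$, and the latter is Hausdorff because $X$ is Hausdorff and $Q$ is discrete. A topology finer than a Hausdorff topology is Hausdorff, so $(G,\mathcal T)$ is Hausdorff. For the continuity of $c$ in part~(ii) I observe that every basic open set $Z(U,m,n,V)$ consists entirely of triples with middle coordinate $mn^{-1}$, so $c$ is locally constant; since $Q$ is discrete, this is precisely continuity.

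The heart of the argument is that the range map $r(x,q,y)=x$ (and, by the symmetric argument, the source map $(x,q,y)\mapsto y$) is a local homeomorphism onto $X$. Here I will use the subbasic neighborhoods $Z(U,m,n,V)$ with $U\subset U(m)$, $V\subset U(n)$ and $T_m|_U$, $T_n|_V$ injective. On such a set, Lemma~\ref{lem-above} tells me $\mathcal T$ coincides with the product topology, so $r$ is continuous there and I only need a continuous inverse onto an open set. The key point is that a point $(x,q,y)\in Z(U,m,n,V)$ has its third coordinate determined by its first: from $y\cdot n=x\cdot m$ and the injectivity of $T_n|_V$ I recover $y=(T_n|_V)^{-1}(T_m(x))$. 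Since $T_n|_V$ is an injective local homeomorphism it is an open embedding, so $(T_n|_V)^{-1}$ is continuous and $T_n(V)$ is open; hence $r$ is injective on $Z(U,m,n,V)$ with open image $U\cap T_m^{-1}(T_n(V))$ and continuous inverse $x\mapsto (x,mn^{-1},(T_n|_V)^{-1}(T_m(x)))$. This exhibits $r$ as a local homeomorphism, i.e.\ $G$ is \'etale. Local compactness is then immediate: each point lies in such a $Z(U,m,n,V)$, which is homeomorphic to an open subset of the locally compact Hausdorff space $X$ and hence locally compact; being open in the Hausdorff space $G$, this makes $G$ locally compact.

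It remains to check that $G$ is a topological groupoid, i.e.\ that inversion and multiplication are continuous. Inversion is painless, since $Z(U,m,n,V)^{-1}=Z(V,n,m,U)$ shows the involution carries the base $\mathcal B$ to itself. The continuity of multiplication is where I expect the main obstacle to lie: given composable $(x_0,mn^{-1},y_0)$ and $(y_0,pq^{-1},z_0)$, I must first put the product into a standard $Z$-form, and this is exactly where directedness re-enters. As in the proof that $G$ is an algebraic groupoid, I choose $a,b\in P$ with $na=pb$ a common upper bound realizing the relevant intersection of domains, so that $x_0\cdot ma=z_0\cdot qb$ and the product equals $(x_0,(ma)(qb)^{-1},z_0)$. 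Around a basic neighborhood $Z(W,ma,qb,W')$ of the product I then have to select open neighborhoods of the two factors on which the auxiliary maps $T_a,T_b$ are injective, mimicking the neighborhood bookkeeping already carried out in the base-for-a-topology lemma and in Lemma~\ref{lem-above}; the verification that products of nearby arrows stay inside $Z(W,ma,qb,W')$ is routine but delicate precisely because of this bookkeeping. Once continuity of the operations is in place, combining it with the \'etale, Hausdorff, and locally compact conclusions above completes the proof.
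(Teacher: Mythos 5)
Most of your argument is correct and follows essentially the same route as the paper. Hausdorffness from the fact that $\mathcal T$ refines the product topology, continuity of $c$ from its local constancy on basic sets, and \'etaleness via the basic sets $Z(U,m,n,V)$ with ${T_m}_{|U}$ and ${T_n}_{|V}$ injective --- on which Lemma~\ref{lem-above} identifies $\mathcal T$ with the product topology --- is exactly the paper's bisection argument, with your explicit inverse $x\mapsto \bigl(x,mn^{-1},({T_n}_{|V})^{-1}(T_m(x))\bigr)$ making it concrete. Your derivation of local compactness from \'etaleness is a mild (and legitimate) variation: the paper instead exhibits $Z(A,m,n,B)$, with $A,B$ compact neighborhoods of $x,y$ inside $U(m),U(n)$, as a compact neighborhood of $(x,q,y)$, again via Lemma~\ref{lem-above}. (The paper also records that $i(x)=(x,e,x)$ is a homeomorphism of $X$ onto $G^{(0)}$, which you should note in passing.)

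The one genuine gap is continuity of multiplication, which you flag as the main obstacle and leave unfinished, proposing to shrink neighborhoods so that the auxiliary maps $T_a,T_b$ become injective. That bookkeeping is misdirected: injectivity of the auxiliary maps is needed only in the \emph{opposite} direction --- to pass from $x'\cdot ma=y'\cdot na$ back down to $x'\cdot m=y'\cdot n$ --- and that work is already encapsulated in the base lemma and in Lemma~\ref{lem-above}, which holds for \emph{arbitrary} subsets $A,B$. For multiplication no injectivity is needed at all: if $(x',mn^{-1},y')\in Z(U,m,n,V)$ and $(y',pq^{-1},z')\in Z(V',p,q,W)$ are composable, choose by directedness an upper bound $r=na=pb$ of $n$ and $p$ with $U(r)=U(n)\cap U(p)$; then $y'\in U(n)\cap U(p)=U(na)$ forces $x'\cdot m=y'\cdot n\in U(a)$, whence $x'\cdot ma=y'\cdot na=y'\cdot pb=z'\cdot qb$, so the product \emph{automatically} lies in $Z(U,ma,qb,W)$. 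The paper then concludes with nets: if the factors converge, they eventually lie in chosen basic sets, hence the products eventually lie in $Z(U,ma,qb,W)$; their coordinates converge (since $\mathcal T$ refines the product topology and $Q$ is discrete), and Lemma~\ref{lem-above} upgrades coordinatewise convergence to $\mathcal T$-convergence on that set. Inserting this paragraph --- rather than the injectivity-shrinking you anticipated --- closes your proof.
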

\begin{proof} The topology $\mathcal T$ is Hausdorff because it is
  finer than the product topology. Let $(x,q,y)\in G$. Pick $(m,n)\in
  P\times P$ such that $q=mn^{-1}$ and $x\cdot m=y\cdot n$. Let $A,B$
  be compact neighborhoods of $x, y$ contained respectively in $U(m)$
  and in $U(n)$. Then $Z(A, m, n, B)$ is a compact neighborhood of
  $(x,q,y)$, and $\mathcal T$ is a locally compact topology. The
  injection map $i(x)=(x,e,x)$ is a homeomorphism from $X$ onto
  $G^{(0)}$.

  The inverse map $(x,q,y)\mapsto (y,q^{-1},x)$ transforms
  $Z(U,m,n,V)$ into $Z(V,n,m,U)$. Therefore, it is a
  homeomorphism. Suppose that $(x_\alpha, s_\alpha,y_\alpha)$
  converges to $(x,s,y)$ and $(y_\alpha, t_\alpha,z_\alpha)$ converges
  to $(y,t,z)$. Pick basic open sets $Z(U,m,n,V)$ and $Z(V',p,q,W)$
  containining respectively $(x,s,y)$ and $(y,t,z)$. By definition,
  for $\alpha$ large enough $(x_\alpha, s_\alpha,y_\alpha)$ belongs to
  $Z(U,m,n,V)$ and $(y_\alpha, t_\alpha,z_\alpha)$ belongs to
  $Z(V',p,q,W)$. As in Lemma~\ref{lem-above}, pick $(a,b)\in P\times
  P$ such that $na=pb$. Then $(x,st,z)$ belongs to $Z(U,ma,qb,W)$ and
  so does $(x_\alpha,s_\alpha t_\alpha,z_\alpha)$ for $\alpha$ large
  enough. We apply Lemma~\ref{lem-above} to conclude that
  $(x_\alpha,s_\alpha t_\alpha,z_\alpha)$ converges to
  $(x,st,z)$. Thus $G$ is a topological locally compact Hausdorff
  groupoid.

  A basis element $S=Z(U,m,n,V)$ such that $T_m$ is injective on $U$
  and $T_n$ is injective on $V$ is a bisection, in the sense that the
  restrictions of the range and source maps $r_{|S}$ and $s_{|S}$ are
  homeomorphisms onto open subsets of $X$. Since $G$ admits a cover of
  open bisections, it is an \'etale groupoid.

  Since the canonical cocycle is continuous with respect to the
  product topology, it is continuous with respect to $\mathcal T$.
\end{proof}

Here is our main application of our Theorem 4.2 (in the form of
  Corollary 4.5); it gives the amenability of the semidirect product
  by a subsemigroup of an amenable group.

\begin{thm} \label{thm-app-directed-sg-actions} Let $(X,P,T)$ be
  a directed locally compact semigroup action. Assume that $P$ is
  a subsemigroup of a countable amenable group $Q$. Then the
  semi-direct product groupoid $G(X,P,T)$ is topologically amenable.
\end{thm}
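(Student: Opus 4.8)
The plan is to realize $G(X,P,T)$ as a skew-product and apply Corollary~\ref{cor-str-surj} (equivalently Theorem~\ref{thm-skewprod}). The canonical cocycle $c:G(X,P,T)\to Q$, $c(x,q,y)=q$, is continuous by Proposition~\ref{prop-semidirect-product-groupoid}, and $Q$ is a countable (hence discrete and amenable) group. By Proposition~\ref{prop-semidirect-product-groupoid} the groupoid is \'etale, locally compact, Hausdorff and second countable, so it has a Haar system (counting measures on the fibers). Thus the hypotheses on $G$ in Theorem~\ref{thm-skewprod} are met, and once I verify that $c^{-1}(e)$ is amenable and control the image $Y=\ts(G)$, amenability of $G(X,P,T)$ will follow.

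First I would identify the kernel $c^{-1}(e)$. An element $(x,e,y)\in G$ satisfies $x\cdot m=y\cdot n$ for some $m,n\in P$ with $mn^{-1}=e$, i.e.\ $m=n$; so $c^{-1}(e)=\set{(x,e,y):x\cdot m=y\cdot m\text{ for some }m\in P}$. This is precisely the equivalence relation groupoid attached to the equivalence $\sim$ on $X$ introduced before the construction of $G$. My plan is to show $c^{-1}(e)$ is a \emph{principal} \'etale groupoid --- indeed an increasing union over $m\in P$ (directed by $\le$) of the proper \'etale subgroupoids $R_m=\set{(x,e,y):x,y\in U(m),\ x\cdot m=y\cdot m}$, each of which is the graph of the local homeomorphism $T_m$ and hence proper. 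Since the action is directed, these $R_m$ form an increasing net whose union is $c^{-1}(e)$; choosing a cofinal \emph{sequence} (available because $Q$, and hence $P$, is countable) I can invoke Lemma~\ref{lem-incr-union} to conclude $c^{-1}(e)$ is Borel amenable, hence amenable.

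Next I would address the image condition. Here $\ts(\gamma)=(s(\gamma),c(\gamma))$, so $Y=\ts(G)\subset X\times Q$. I would show $\ts(G)=\bigcup_{q\in Q}s(c^{-1}(q))\times\sset q$ and that each $c^{-1}(q)$ is open (the cocycle is continuous into a discrete group, so each level set is clopen) with $s$ an open map (as $G$ is \'etale); therefore $Y$ is open in $X\times Q$. This is exactly the situation covered by the final clause of Theorem~\ref{thm-skewprod}, or equivalently the argument of Corollary~\ref{cor-discrete}. With $c^{-1}(e)$ amenable, $Y$ open, and $Q$ countable and amenable, Theorem~\ref{thm-skewprod} yields amenability of $G(c)$ and then of $G=G(X,P,T)$ itself.

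The main obstacle I anticipate is \textbf{not} the skew-product machinery but the verification that $c^{-1}(e)$ is amenable, and in particular writing it correctly as an increasing union of \emph{proper} subgroupoids. The subtlety is that the relation $x\sim y$ is defined by $x\cdot m=y\cdot n$ with possibly different $m,n$, whereas the pieces $R_m$ use a common $m$; reconciling these requires the directedness hypothesis (to replace a pair $(m,n)$ by a common upper bound $r$ with $U(m)\cap U(n)=U(r)$) and the transitivity of $\sim$ established earlier. I must also confirm that each $R_m$ is genuinely proper as a Borel $R_m$-space --- this follows because $R_m$ is the graph of a partial homeomorphism, so it is a proper \'etale equivalence relation with Borel (indeed continuous) transversal given by $T_m$ itself, allowing a $\delta$-mass invariant system. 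Getting this cofinality-plus-properness argument clean is where the real work lies; everything else is a direct application of the cocycle theorem together with the openness of $Y$.
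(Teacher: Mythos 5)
Your overall architecture matches the paper's: apply the cocycle machinery (the paper uses Corollary~\ref{cor-discrete} directly, whose proof is exactly your openness-of-$Y$ argument), identify $c^{-1}(e)$ as $\set{(x,e,y): x\cdot m=y\cdot m \text{ for some } m\in P}$, and reduce everything to Borel amenability of this kernel via an increasing union of proper pieces and Lemma~\ref{lem-incr-union}. But the step where you assert that ``these $R_m$ form an increasing net'' directed by $\le$ is false for partial actions, and this is precisely where the real content of the theorem lies. If $m\le m'$, then $U(m')\subset U(m)$: a pair $(x,y)$ with $x,y\in U(m)$ and $x\cdot m=y\cdot m$ need not have $x\in U(m')$ at all, so $R_m\not\subset R_{m'}$ in general (already for a singly generated system with $\operatorname{dom}(T^2)\subsetneq\operatorname{dom}(T)$). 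Worse, your fallback of ``choosing a cofinal sequence'' in $P$ cannot be repaired by countability: directedness of the \emph{action} only provides upper bounds for $m,n$ when $U(m)\cap U(n)\not=\emptyset$, so $(P,\le)$ itself need not be directed and need not admit any cofinal sequence --- the free semigroup inside the free group (the Nica/Cuntz example the paper explicitly targets) is a case in point. So your family $\set{R_m}_{m\in P}$ is not directed under inclusion, and $c^{-1}(e)=\bigcup_m R_m$ is not an increasing union of the single-$m$ relations; Lemma~\ref{lem-incr-union} does not apply as you have set things up.

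The paper's fix is exactly a repair of this defect (and, per the acknowledgments, this was the gap the referee found in the authors' original proof, so you are in good company). Instead of single elements $m$, one works with \emph{finite action-directed} sets $F\subset P$ ($e\in F$, and any $n,m\in F$ with $U(n)\cap U(m)\not=\emptyset$ have a dominating $r\in F$ with $U(r)=U(n)\cap U(m)$) and the relations $R_F=\bigcup_{m\in F}R_m$; action-directedness of $F$ is what makes $R_F$ transitive, hence a Borel equivalence relation. Claims 1--3 of the paper then do the combinatorial work you skipped: a monotone choice $F\mapsto r_F$ of upper bounds with $U(r_F)=\bigcap_{n\in F}U(n)$, showing every finite $S\subset P$ sits inside a finite action-directed $F$, whence (Claim 4, using countability of $P$) an increasing sequence $F_i$ of finite action-directed sets with $c^{-1}(e)=\bigcup_i R_{F_i}$. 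Properness of each $R_F$ uses that the orbit $[x]_F$ is a finite union of discrete fibres $T_m^{-1}(x\cdot m)$, so orbits are locally closed and the Mackey--Glimm--Ramsay dichotomy gives a standard orbit space --- your transversal idea works for a single $m$ but does not obviously survive the passage to the finite unions $R_F$ that are actually needed. Only then does Lemma~\ref{lem-incr-union}, plus \'etaleness of the open subgroupoid $c^{-1}(e)$ and the Borel-to-topological transfer from \cite{ren:xx13}, close the argument.
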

  \begin{proof}
  To prove this result, we apply Corollary~\ref{cor-discrete} to the
  continuous cocycle $c:G(X,P,T)\to Q$. The only missing point is the
  amenability of the equivalence relation $R=c^{-1}(e)$.  As is
  standard, we try to write $R$ as the increasing union of
  well-behaved equivalence relations (for example, see
  \cite{simwil:xx15}*{Lemma~3.5}).

  To see how to do this, we call a subset $F\subset P$
  \emph{action-directed} if $e\in F$ and given $n,m\in F$ with
  $U(n)\cap U(m)\not=\emptyset$, then there is an $r\in F$ dominating
  $n$ and $m$ such that $U(r)=U(n)\cap U(m)$.  For example, by
  hypotheses, $P$ itself is action-directed.  More to the point, if
  $F$ is action-directed, then
  \begin{equation*}
    R_{F}=\set{(x,y):\text{there is a $m\in F$ such that $x\cdot
        m=y\cdot m$}}
  \end{equation*}
is a Borel equivalence relation: it is an $F_{\sigma}$ subset of
$X\times X$ and an equivalence relation since $F$ is action-directed.
We just need to specify suitable sets $F$.

Let $\F$ be the collection of finite subsets $F$ of $P$ such that
$\bigcap_{m\in F}U(m)\not=\emptyset$.
A a simple
  induction argument implies the following.
  \begin{claim}
    \label{cl-1}
If $F\in\F$, then there is an
$r\in P$ such that $n\le r$ for all $n\in F$ and
$U(r)=\bigcap_{n\in F}U(n)$.
  \end{claim}
  \begin{claim}
    \label{cl-2}
There is a map $F\mapsto r_{F}$ from
  $\F$ to $P$ such that $r_{\emptyset}=e$, 
$r_{\{n\}}=n$, and such that
  \begin{enumerate}
  \item $n\le r_{F}$ for all $n\in F$,
  \item $U(r_{F})=\bigcap_{n\in F}U(n)$, and
  \item $F'\subset F$ implies $r_{F'}\le r_{F}$.
  \end{enumerate}
  \end{claim}
  \begin{proof}[Proof of Claim~\ref{cl-2}]
    We start by defining $r_{\emptyset}$ and $r_{\{n\}}$ as above.
    Suppose that we have defined $r_{F}$ for all $F\in\F$ with $|F|\le
    k$ for some $k\ge 1$ such that (a), (b) and (c) hold.

   Let us define $r_{F}$ for $F\in\F$ with $k+1$ elements.  Note that
   \begin{equation*}
     \F'=\set{S\subset F:|S|=k}
   \end{equation*}
is a subset of $\F$.  By Claim~\ref{cl-1}, we can define $r_{F}\in P$
so that $r_{S}\le r_{F}$ for all $S\in\F'$ and such that
$U(r_{F})=\bigcap_{S\in\F'} U(r_{S})$.

Now consider the set $\set{r_{F}:\text{$F\in \F$ and $|F|\le k+1$}}$.
Then (a) and~(b) hold by assumption if $|F|\le k$.  But if $|F|=k+1$
and $n\in F$, then there is an $S\in\F'$ such that $n\in S$.  Hence
$n\le r_{S}\le r_{F}$.  Similarly,
\begin{equation*}
  U(r_{F})=\bigcap_{S\in\F'} U(r_{S}) = \bigcap_{S\in\F'}\bigcap_{n\in
  S} U(n) =\bigcap_{n\in F}U(n).
\end{equation*}
Hence (a) and~(b) hold for sets of $k+1$ or fewer elements.

Now suppose $F'$ is a proper subset of $F$.  We have $r_{F'}\le r_{F}$
by assumption if $|F|\le k$.  If $|F|=k+1$, then there exists  $S\subset F$
with $|S|=k$.  Then $r_{F'}\le r_{S}\le r_{F}$.
  \end{proof}

  As pointed out by the referee, the possibility that $\F$ might be
  directed is suggested by \cite{lacrae:jfa96} where the authors work with
  quasi-lattice ordered subgroups.
  That the same is true for directed actions is
  implied by the following claim.
  \begin{claim}
    \label{cl-3}
Every finite subset $S\subset P$ is contained in a \emph{finite}
action-directed subset $F$.
  \end{claim}
  \begin{proof}[Proof of Claim~\ref{cl-3}]
    Let $F\mapsto r_{F}$ be the map defined in Claim~\ref{cl-2}. Define
  \begin{equation*}
    F=\set{ r_{S'} :\text{$S'\subset S$ and $S'\in\F$}}.
  \end{equation*}
  Clearly $e\in F$ and $S\subset F$.  We claim that $F$ is
  action-directed.  Let $k,l\in F$ be such that $U(k)\cap
  U(l)\not=\emptyset$.  We can assume that $k=r_{S'}$ and $l=r_{S''}$
  for appropriate subsets of $S$.  Then $F'=S'\cup S''$ satisfies
  $\bigcap_{n\in F'}U(n)=U(k)\cap U(l)\not= \emptyset$.  But then
  $r_{F'}\in F$.  Since $F\mapsto r_{F}$ is monotonic, we have
  $r_{S'}\le r_{F'}$ and $r_{S''}\le r_{F'}$.  This completes the
  proof of the claim.
  \end{proof}

  \begin{claim}
    \label{cl-4}
  There is a sequence $(F_{i})$ of finite action-directed sets such
  that $F_{i}\subset F_{i+1}$ and such that
  \begin{equation*}
    c^{-1}(e) =\bigcup_{i}R_{F_{i}}.
  \end{equation*}
  \end{claim}
  \begin{proof}
    [Proof of Claim~\ref{cl-4}]
    Let $P=\sset{p_{1},p_{2},\dots}$ with $p_{1}=e$.  Now we can
    employ Claim~\ref{cl-3} to inductively construct the $F_{i}$ where
    $F_{i+1}$ is an action-directed set containing $F_{i}$ and $p_{i+1}$.
  \end{proof}

  The key observation is that if $F$ is finite and action-directed,
  then $R_{F}$ is proper. To see this, note that the equivalence class
  $[x]_{F}$ is the finite union over $m\in F$ of the fibres
  $T_{m}^{-1}(x\cdot m)$.  Since $T_{m}$ is a local homeomorphism, the
  fibres are discrete.  Hence each orbit is discrete and therefore
  locally closed.  The Mackey-Glimm-Ramsay dichotomy \cite{ram:jfa90}
  then implies that the orbit space $X/R_{F}$ is a standard Borel
  space. Then \cite{anaren:amenable00}*{Example~2.1.4(2)} implies that
  $R_{F}$ is a proper Borel groupoid.

It follows from Claim~\ref{cl-4} and
Lemma~\ref{lem-incr-union} that $c^{-1}(e)$ is Borel amenable.  Since
it is open in $G(X,P,T)$, it is \'etale.  Hence it is amenable by
\cite{ren:xx13}*{Corollary~2.15}. 
\end{proof}

\begin{remark}
  It is well known that there are interesting amenable actions (in the
  sense that the semi-direct product groupoid of the action is
  amenable) of non-amenable groups. We will encounter in the next
  section an amenable action of a free semigroup (appearing in the
  work of Nica \cite{nic:jot92} on the Wiener-Hopf algebra of a
  semigroup). In this case, the amenability of the action cannot be
  deduced from the above theorem as the free group is not amenable.
\end{remark}

\section{Application to Topological Higher Rank Graphs}
\label{sec:THRG}

Higher-rank graphs provide interesting semigroup actions which
generalize one-sided subshifts of finite type. We recall some
definitions but refer to \cite{yee:jot07} for a complete
exposition. 

We introduce two changes with respect to \cite{yee:jot07}. First, we
define $P$-graphs for an arbitrary subsemigroup $P$ of a group $Q$
while Yeend considers the case $P=\N^d\subset Q=\Z^d$. In order to
develop the theory smoothly, we shall often need the assumption that
$P$ is quasi-lattice ordered, as defined in Example~\ref{quasi-lattice
  ordered}. Such $P$-graphs have already been introduced in
\cite{bsv:ijm13}. Second, we construct the path space $\Omega$ as a
closure in the space of closed subsets of the $P$-graph with respect
to the Fell topology. The use of directed hereditary subsets to
construct the path space goes back to \cite{nic:jot92} and is present
in \cite{exe:sf09,bsv:ijm13,spi:tams14}.

Here
are our definitions. A \emph{small category} $\Lambda$ is given by its
set of arrows $\Lambda^{(1)}$ (usually denoted by $\Lambda$), its set
of vertices $\Lambda^{(0)}$ (viewed as a subset of $\Lambda$ through
the identity map $i:\Lambda^{(0)}\to\Lambda$), range and source maps
$r,s:\Lambda\to\Lambda^{(0)}$ and composition map $\circ:
\Lambda^{(2)}\to\Lambda$ where $\Lambda^{(2)}$ is the set of
composable pairs of arrows, i.e.,
$(\lambda,\mu)\in\Lambda\times\Lambda$ such that
$s(\lambda)=r(\mu)$. Given $A,B\subset\Lambda$, we write $A*B=(A\times
B)\cap \Lambda^{(2)}$. We make the following topological assumptions.
\begin{enumerate}
\item $\Lambda$ and $\Lambda^{(0)}$ are locally compact Hausdorff
  spaces;
\item $r,s:\Lambda\to \Lambda^{(0)}$ are continuous and $s$ is a local
  homeomorphism;
\item $i:\Lambda^{(0)}\to\Lambda$ is continuous;
\item composition $\circ: \Lambda^{(2)}\to\Lambda$ is continuous and
  open.
\end{enumerate}

The following definition is a topological version of
\cite{bsv:ijm13}*{Definition 2.1}. 
\begin{definition} Let $P$ be a semigroup with unit element $e$. A
  \emph{higher-rank topological graph graded by $P$}, or
  \emph{$P$-graph} for short, is a topological small category
  $\Lambda$ as above endowed with a map, called the degree map,
  $d:\Lambda\to P$ which satisfies the following properties
  \begin{enumerate}
  \item the degree map $d:\Lambda\to P$ is continuous (where $P$ has
    the discrete topology);
  \item for all $(\mu,\nu)\in \Lambda^{(2)}$, $d(\mu\nu)=d(\mu)d(\nu)$
    and for all $v\in\Lambda^{(0)}$, $d(v)=e$;
  \item it has the unique factorization property: for all $m,n\in P$,
    the composition map $\Lambda^m *\Lambda^n\to \Lambda^{mn}$ is a
    homeomorphism.
  \end{enumerate}
\end{definition}

As a basic example of $P$-graph, we consider the graph of a semigroup
action. If $(X,P,T)$ is a locally compact semigroup action as in the
previous section, set
\begin{equation*}
  \Lambda^{(0)}=X,\quad \Lambda=X*P,\quad r(x,n)=x,\quad
  s(x,n)=x\cdot n\quad\text{and}\quad d(x,n)=n.
\end{equation*}
Composition is necessarily given by $(x,m)(x\cdot m,n)=(x,mn)$. It
results from our axioms of a semigroup action that $\Lambda$ is a
$P$-graph which we call the \emph{graph of the action}. For example,
if $(X,T)$ is a singly generated dynamical system as defined in the
previous section, it is useful to think of $(x,n)\in \Lambda=X*\N$ as
a finite path $(x, Tx, \ldots, T^{n-1}x)$ when $n\ge 1$. More
generally, we think of $(x,n)\in X*P$ as a finite path $\set{(x\cdot
  m):m\le n}$, although this latter set need not be finite.

Of course, not all $P$-graphs arise as graphs of a semigroup
action. The topological $\N$-graphs are exactly the usual topological
graphs. A topological graph is given by a pair of locally compact
Hausdorff spaces $(E,V)$ with two maps $r,s: E\to V$, $r$ continuous
and $s$ local homeomorphism. The space $\Lambda$ of finite paths is
the disjoint union over $\N$ of the spaces $E^{(n)}$ of paths of
length $n$, where $E^{(0)}=V$, $E^{(1)}=E$ and
\begin{equation*}
  E^{(n)}=\set{e_1e_2\ldots e_n: \text{$e_i\in E$, $s(e_i)=r(e_{i+1})$
      for $i=1,\ldots, n-1$}}.
\end{equation*}
endowed with the product topology, for $n\ge 2$. It is a topological
category with $\Lambda^{(0)}=V$, the obvious range and source maps and
composition given by concatenation. It has an obvious degree map
$d:\Lambda\to\N$ where $d(\lambda)=n$ if and only if $\lambda\in
E^{(n)}$. This definition includes the graphs which appear in the
theory of graph C*-algebras, which is the case when $E$ and $V$ are
discrete spaces. A singly generated dynamical system $(X,T)$ can be
viewed as a topological graph with $V=X$, $E=\operatorname{dom}(T)$,
$r(x)=x$ and $s(x)=Tx$. Its space of finite paths $\Lambda$ agrees
with $X*\N$. As another example of topological graph, consider
$(E=\T,V=\T)$ where $\T$ is the circle $|z|=1$ and the range and
source maps are respectively $z\mapsto z^2$ and $z\mapsto
z^3$. Topological graphs where the range and source maps are both
local homeomorphisms are called polymorphisms in
\cite{arzren:oaqft97}.

By analogy with the above examples, the elements of a higher-rank
graph $\Lambda$ are called finite paths. We define $\mu\le\lambda$ if
there exists $\nu$ such that $\lambda=\mu\nu$. This is a pre-order
relation which shares some of the properties of the pre-order relation
we have defined on the semigroup $P$ in the previous section. In
particular, suppose that $\lambda$ and $\mu$ have a common upper bound
$\nu$. Then $d(\lambda)$ and $d(\mu)$ have $d(\nu)$ as a common upper
bound. If $P$ is quasi-lattice ordered, $d(\lambda)$ and $d(\mu)$ have
a least common upper bound $p$. Therefore, there exists $\nu'\le\nu$
c.u.b.\ of $\lambda$ and $\mu$ with $d(\nu')= p$. We say that $\nu'$
is a l.u.b.\ of $\lambda$ and $\mu$. Such a l.u.b.\ need not be
unique. Given $A,B$ subsets of $\Lambda$, we denote by $A\vee B$ the
set of elements which are l.u.b.\ of some pair $(\lambda,\mu)\in
A\times B$. Given $\mu\le\lambda$, we define the segment
$[\mu,\lambda]:=\{\nu: \mu\le\nu\le\lambda\}$. We shall use the
following notation. If $\lambda\in\Lambda$ and $n\in P$ are such that
$n\le d(\lambda)$, then $\lambda$ can be written uniquely
$\lambda=\mu\nu$ where $d(\mu)=n$. Then we define $\lambda\ndot
n:=\nu$. Conversely, given $\mu\in\Lambda$, we can define $\mu\nu$ for
all $\nu\in r^{-1}(s(\mu))$.

We shall need some further assumptions on our higher-rank graphs.
\begin{definition} One says that the $P$-graph $\Lambda$ is
  \begin{enumerate}
  \item \emph{$(r,d)$-proper} if the map $(r,d):\Lambda\to
    \Lambda^{(0)}\times P$ is proper;
  \item \emph{compactly aligned} if $P$ is quasi-lattice ordered and for all
    compact subsets $A, B \subset\Lambda$, the subset $A\vee B$ is
    compact.
  \end{enumerate}
\end{definition}

In the setting of a discrete $\N$-graph, condition (a) means that a
vertex emits finitely many edges while condition (b) is always
satisfied. The graph of the action of a semigroup always satisfies (a)
since $(r,d)$ is the injection of $\Lambda=X*P$ into $X\times P$. When
$P$ is quasi-lattice ordered, condition (a) implies condition
(b). Indeed, if $\nu$ belongs to $A\vee B$ where $A,B$ are subsets of
$\Lambda$, then $r(\nu)$ belongs to $r(A)\cap r(B)$ and $d(\nu)$
belongs to $d(A)\vee d(B)$. If $A$ and $B$ are compact, $r(A)\cap
r(B)$ is compact and $d(A)\vee d(B)$ is finite.

Let $\Lambda$ be a $P$-graph. Set
\begin{equation*}
  \Lambda*P=\set{(\lambda,m)\in \Lambda\times P: m\le d(\lambda)}
\end{equation*}
and define
\begin{equation*}
  T: \Lambda*P\to \Lambda
\end{equation*}
by $T(\lambda,m):=\lambda\ndot m=\nu$ if $d(\lambda)=mn$ and
$\lambda=\mu\nu$ with $d(\mu)=m$ and $d(\nu)=n$.

\begin{prop} Let $\Lambda$ be a $P$-graph. Define $T$ as above.
Then $T$ is a directed action of $P$ on $\Lambda$ by partial local
homeomorphisms.
\end{prop}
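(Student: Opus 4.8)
The plan is to establish three things: that $T$ obeys the two axioms of a right partial action, that each $T_m\colon U(m)\to V(m)$ is a partial local homeomorphism with open domain and range, and that the action is directed. Every step rests on the unique factorization property. First I would verify the action axioms. Since $e\le d(\lambda)$ always, $(\lambda,e)\in\Lambda*P$; and because the degree-$e$ elements are exactly the vertices (a two-line consequence of unique factorization), factoring $\lambda=\mu\nu$ with $d(\mu)=e$ forces $\mu=r(\lambda)$ and $\nu=\lambda$, so $\lambda\cdot e=\lambda$. For the compatibility axiom I would note that $m\le d(\lambda)$ lets me write $d(\lambda)=mk$ with $d(\lambda\cdot m)=k$, so that $n\le d(\lambda\cdot m)$ holds exactly when $mn\le d(\lambda)$; since $mn\le d(\lambda)$ itself forces $m\le d(\lambda)$, this gives the equivalence $(\lambda,mn)\in\Lambda*P$ iff both $(\lambda,m)\in\Lambda*P$ and $(\lambda\cdot m,n)\in\Lambda*P$. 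The identity $(\lambda\cdot m)\cdot n=\lambda\cdot(mn)$ then follows by factoring $\lambda$ into three consecutive pieces of degrees $m,n,p$ with $d(\lambda)=mnp$ and comparing the two regroupings, which agree by uniqueness of factorization.

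Next I would pin down $U(m)$ and $V(m)$ and prove the local homeomorphism property. As $m\le d(\lambda)$ means $d(\lambda)\in mP$, we have $U(m)=d^{-1}(mP)$, which is open since $P$ is discrete and $d$ continuous; likewise $V(m)=r^{-1}\bigl(s(\Lambda^m)\bigr)$ is open because $s$ is a local homeomorphism (so $s(\Lambda^m)$ is open) and $r$ is continuous. For the local homeomorphism claim I would decompose $U(m)=\bigsqcup_n\Lambda^{mn}$ into clopen pieces indexed by the distinct elements of $mP$, and use the factorization homeomorphism $\Lambda^m*\Lambda^n\cong\Lambda^{mn}$ to identify $T_m$ on $\Lambda^{mn}$ with the projection $(\mu,\nu)\mapsto\nu$. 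This projection is exactly the pullback of the local homeomorphism $s|_{\Lambda^m}$ along $r|_{\Lambda^n}$, and pullbacks of local homeomorphisms are again local homeomorphisms; hence $T_m$ is a local homeomorphism on each piece, and so on all of $U(m)$.

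Finally I would address directedness, which is where the quasi-lattice ordering of $P$ enters. Given $m,n$ with $U(m)\cap U(n)\neq\emptyset$, I would pick $\lambda$ in the intersection; then $d(\lambda)$ is a common upper bound of $m$ and $n$ in $P$, so by quasi-lattice ordering $m$ and $n$ have a least upper bound $m\vee n$. Since $m\vee n$ lies below every common upper bound, any $\mu$ with $m,n\le d(\mu)$ also satisfies $m\vee n\le d(\mu)$, whence $U(m)\cap U(n)\subset U(m\vee n)$; the reverse inclusion is immediate from $m,n\le m\vee n$. Thus $r=m\vee n$ is an upper bound realizing $U(m)\cap U(n)=U(r)$, so the action is directed.

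The action axioms and the openness of $U(m),V(m)$ are essentially bookkeeping with unique factorization. I expect the main obstacle to be the clean identification of $T_m$ with a pullback projection together with the inference that it inherits the local homeomorphism property: one must use that it is the source map $s$ (a local homeomorphism), and not the merely continuous range map $r$, that governs the relevant fibered product. The directedness step is brief but depends essentially on the least-upper-bound property, making the quasi-lattice hypothesis on $P$ indispensable there.
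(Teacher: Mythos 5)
Your proposal is correct, and on the topological core it runs essentially parallel to the paper's proof: the paper likewise notes that $V(m)=r^{-1}(s(\Lambda^m))$ is open because $s$ is open, writes $U(m)$ as the union of the open sets $\Lambda^{ma}$, $a\in P$, and factors the restriction of $T_m$ to $\Lambda^{ma}$ as the factorization homeomorphism $\Lambda^{ma}\to\Lambda^m*\Lambda^a$ followed by the second projection, which is open because $s$ is open. The only real difference in that part is how local injectivity is extracted: you realize the projection as the base change of the local homeomorphism $s|_{\Lambda^m}$ along $r|_{\Lambda^a}$, whereas the paper uses the identity $s(T_m\lambda)=s(\lambda)$ to conclude that $T_m$ is injective on $U(m)\cap U$ for any open $U$ on which $s$ is injective; both mechanisms exploit exactly the point you flag, namely that it is $s$, not $r$, that is a local homeomorphism. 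You additionally verify the partial-action axioms via unique factorization, which the paper leaves tacit; that is harmless extra care.

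The genuine divergence is the directedness step, and there your argument is the better one. The paper picks an arbitrary $\lambda\in U(m)\cap U(n)$ and asserts $U(m)\cap U(n)\subset U(d(\lambda))$; as written this is too strong. For instance, in an $\N^2$-graph with $m=(1,0)$, $n=(0,1)$ and $\lambda$ of degree $(2,2)$, a path of degree $(1,1)$ lies in $U(m)\cap U(n)$ but not in $U(d(\lambda))$. The claim becomes correct only after replacing $d(\lambda)$ by $m\vee n$ (equivalently, truncating $\lambda$ to its initial segment of degree $m\vee n$), since leastness of $m\vee n$ gives $U(m)\cap U(n)\subset U(m\vee n)$ while $m,n\le m\vee n$ gives the reverse inclusion --- which is precisely your argument. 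Your appeal to the quasi-lattice hypothesis is legitimate even though it is absent from the bare statement: it is the standing assumption of the section (``we shall often need the assumption that $P$ is quasi-lattice ordered'') and appears explicitly in the subsequent theorem for $(\Omega,P,T)$; moreover it is genuinely needed, since for the one-vertex $P$-graph $\Lambda=P$ with $d=\id$, directedness of $T$ says exactly that any two elements of $P$ with a common upper bound have a least one. So on this point your proof repairs a gap in the paper's one-line argument rather than merely restating it.
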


\begin{proof} The domain of $T_m$ is the open set
  $U(m)=\{\lambda\in\Lambda: m\le d(\lambda)\}$.  Its range
  \begin{equation*}
    V(m)=\set{\nu\in\Lambda: \text{there exists $\mu\in \Lambda^m$
        such that $ (\mu,\nu)\in
        \Lambda^{(2)}$}}=r^{-1}(s(\Lambda^m))
  \end{equation*}
  is also open because $s$ is open. Let us show that $T_m$ is
  continuous and open. Since $U(m)$ is the union of the open subsets
  $\Lambda^{ma}$ when $a$ runs over $P$, it suffices to study its
  restriction to $\Lambda^{ma}$. This restriction factors as
$$\Lambda^{ma}\to \Lambda^m*\Lambda^a\to \Lambda$$
The first map is a homeomorphism by assumption. The second map is the
restriction of the projection onto the second factor, and is open
because $s$ is open. Therefore the composition is continuous and open.
Moreover $s:\Lambda\to\Lambda^{(0)}$ is a local homeomorphism. On an
open subset $U$ of $\Lambda$ on which $s$ is injective,
$${T_m}_{|U}: U(m)\cap U\to \Lambda$$ is a homeomorphism onto an open subset.

To see that the action is directed, suppose that
 $U(m)\cap U(n)\not=\emptyset$. Then $m,n\le
d(\lambda)$ for some $\lambda\in U(m)\cap U(n)$.  Then $U(m)\cap U(n)
\subset U(d(\lambda))$.
\end{proof}

If $P$ is a subsemigroup of a discrete group $Q$
and $\Lambda$ is a $P$-graph, then we can form the semi-direct product
groupoid $G(\Lambda, P, T)$. This groupoid is proper, which means that
the map $(r,s): G(\Lambda, P, T)\to \Lambda\times\Lambda$ is proper.
The fine structure of such groupoids can be interesting (in the group
case, see for example \cites{echeme:em11,echwil:tams14}).  In the
theory of graph algebras, $\Lambda$ is the space of finite paths. It
is fruitful (and necessary) to construct a larger space, which is
called the path space and which includes infinite paths. This is what
we do in the next subsection.

\subsection{The path space \boldmath $\Omega$}
Just as in the case of a graph, we want to define a space of paths,
both finite and infinite. Our construction is directly inspired by a
construction of A. Nica in \cite{nic:jot92} which we recall
below. In
fact, our construction agrees with Nica's when $\Lambda=X*P$ and when
$X$ is reduced to one point. The same idea of defining paths as
hereditary directed subsets of the graph appears also in
\cite{bsv:ijm13}*{Section 3}.

Let us first summarize the exposition given in \cite{nic:jot92}. There
$P$ is a subsemigroup of a discrete group $Q$ and it is assumed to be
quasi-lattice ordered. One embeds $P$ into the space $\sset{0,1}^P$ of
all subsets of $P$ endowed with the product topology by sending $m\in
P$ to the segment $j(m)=[e,m]$. The Wiener-Hopf closure of $P$ is the
closure of $j(P)$ in $\{0,1\}^P$. It is denoted by $\Omega(P)$. Nica
remarks that the elements of $\Omega(P)$ are exactly the non-empty
hereditary and directed subsets of $P$. As we will see below, a
similar construction can be employed to define a closure $\Omega$ of a
topological $P$-graph~$\Lambda$.

Recall that we have defined on $\Lambda$ the pre-order $\mu\le\lambda$
if there exists $\nu$ such that $\lambda=\mu\nu$. Note that this
implies that $r(\mu)=r(\lambda)$ and $d(\mu)\le d(\lambda)$.

Following \cite{nic:jot92}, we call a subset $A$ of $\Lambda$
\emph{hereditary} if $\mu\le\lambda\in A$ implies $\mu\in A$, and
\emph{directed} if any two elements of $A$ have a c.u.b.\ in
$A$. Hereditary and directed subsets are called filters in
\cite{exe:sf09,bsv:ijm13}. Our 
terminology is the same as in \cite{spi:tams14}. We
denote by $\tilde\Omega$ the set of all hereditary and directed closed
subsets of $\Lambda$ and by $\Omega$ the set of all non-empty
hereditary and directed closed subsets of $\Lambda$. We view
$\tilde\Omega$ and $\Omega$ as subsets of the space ${\mathcal
  C}(\Lambda)$ of all closed subsets of $\Lambda$ endowed with the
Fell topology \cite{wil:crossed}*{\S H.1}.  Recall that a basis for
the Fell topology on $\mathcal C(\Lambda)$ is given by sets of the
form
\begin{equation*}
  \mathcal U(K;U_{1},\dots,U_{m})=\set{F\in \mathcal
    C(\Lambda):\text{$F\cap K=\emptyset$ and $F\cap U_{i}\not=\emptyset$}}
\end{equation*}
where $K\subset \Lambda$ is compact and each $U_{i}\subset \Lambda$ is
open. Then as in
\cite{wil:crossed}*{Lemma~H.2}\footnote{Unfortunately, the statement
  of \cite{wil:crossed}*{Lemma~H.2} omits the observation that (F1)
  and (F2) must hold for every subnet of the original net.} a net
$(F_{\beta})$ converges to $F$ in $\mathcal{C}(\Lambda)$ if and only
if every subnet $(F_{i})$ of $(F_{\beta})$ is such that
\begin{enumerate}[\qquad (F1)]
\item given $\lambda_{i}\in F_{i}$ such that $\lambda_{i}\to \lambda$,
  then $\lambda\in F$, and
\item if $\lambda\in F$, then there is a subnet $(F_{i_{j}})$ and
  $\lambda_{j}\in F_{i_{j}}$ such that $\lambda_{j}\to \lambda$.
\end{enumerate}

\begin{lemma}\label{rangemap}%
  \begin{enumerate}
  \item Let $A$ be a non-empty directed subset of $\Lambda$. Then $A$
    is contained in $x\Lambda:=r^{-1}(x)$ for some (necessarily
    unique) $x\in\Lambda^{(0)}$, which will be written $r(A)$;
  \item the map $r:\Omega\to\Lambda^{(0)}$ is continuous.
  \end{enumerate}
\end{lemma}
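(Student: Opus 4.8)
The plan is to treat the two parts separately, with part (a) supplying the definition of $r(A)$ that part (b) needs.

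For part (a) I would exploit the observation, recorded just above the lemma, that $\mu\le\lambda$ forces $r(\mu)=r(\lambda)$. Given a non-empty directed set $A$ and any two elements $\lambda,\mu\in A$, directedness supplies a c.u.b.\ $\nu\in A$ with $\lambda\le\nu$ and $\mu\le\nu$; hence $r(\lambda)=r(\nu)=r(\mu)$. So $r$ is constant on $A$, say with value $x$, and therefore $A\subset r^{-1}(x)=x\Lambda$. Uniqueness of $x$ is immediate since $A\neq\emptyset$: any $\lambda_{0}\in A$ pins down $x=r(\lambda_{0})$. This legitimizes the notation $r(A):=x$, and it is exactly this value that part (b) is claiming varies continuously with $A$.

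For part (b) I would argue continuity directly from a single basic Fell neighborhood rather than running through the net characterization (F1)--(F2). Fix $F\in\Omega$ and an open neighborhood $W$ of $r(F)$ in $\Lambda^{(0)}$. Since $F$ is non-empty, choose $\lambda_{0}\in F$, so $r(\lambda_{0})=r(F)\in W$. Because $r\colon\Lambda\to\Lambda^{(0)}$ is continuous, $U:=r^{-1}(W)$ is open in $\Lambda$, contains $\lambda_{0}$, and satisfies $r(U)\subset W$. The basic Fell-open set $\mathcal U(\emptyset;U)=\set{F'\in\mathcal C(\Lambda):F'\cap U\neq\emptyset}$ then contains $F$, and for any $F'\in\Omega\cap\mathcal U(\emptyset;U)$ I may pick $\mu\in F'\cap U$ and invoke part (a) to obtain $r(F')=r(\mu)\in r(U)\subset W$. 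Hence $r$ carries the neighborhood $\Omega\cap\mathcal U(\emptyset;U)$ of $F$ into $W$, which is continuity of $A\mapsto r(A)$ at $F$.

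There is no serious obstacle here: part (a) is a one-line consequence of the range-invariance of $\le$, and the only real point in part (b) is spotting that the single-open-set neighborhood $\mathcal U(\emptyset;U)$ — the Fell basic set with empty compact part — is precisely what converts continuity of $r$ on $\Lambda$ into continuity of $r$ on $\Omega$. If one prefers the net description, the same idea runs through (F2): given $F_{\beta}\to F$ in $\Omega$ and $\lambda_{0}\in F$, condition (F2) produces a subnet $(F_{j})$ and points $\lambda_{j}\in F_{j}$ with $\lambda_{j}\to\lambda_{0}$, whence $r(F_{j})=r(\lambda_{j})\to r(\lambda_{0})=r(F)$; applying this to every subnet and using the standard subnet criterion for convergence yields $r(F_{\beta})\to r(F)$. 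I would present the neighborhood argument as the main one, since it avoids the subnet bookkeeping entirely.
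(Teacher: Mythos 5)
Your proposal is correct and takes essentially the same route as the paper: part (a) is the identical one-line argument from directedness plus the range-invariance of $\le$, and your neighborhood argument for part (b) is the paper's net argument unpacked, since the paper likewise verifies continuity by observing that $A\cap r^{-1}(U)\neq\emptyset$ forces $A_\alpha\cap r^{-1}(U)\neq\emptyset$ eventually --- which is precisely your Fell-basic open set with empty compact part applied to $U=r^{-1}(W)$.
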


\begin{proof}
  (a) By definition, the relation $\mu\le\lambda$ implies that
  $r(\mu)=r(\lambda)$.  Let $\mu,\nu\in A$. Since there exists
  $\lambda$ such that $\mu,\nu\le\lambda$, we must have
  $r(\mu)=r(\nu)$.

  (b) Suppose that $A_\alpha, A\in\Omega$ and $A_\alpha$ tends to
  $A$. Let $x_\alpha=r(A_\alpha)$ and $x=r(A)$. Let $U$ be an open
  neighborhood of $x$. Since $A\cap r^{-1}(U)\not=\emptyset$, there
  exists $\alpha_0$ such that $A_\alpha\cap r^{-1}(U)\not=\emptyset$
  for all $\alpha\ge \alpha_0$. Then, $x_\alpha\in U$.
\end{proof}

\begin{lemma}\label{closedrelation} Assume $(r,d)$ is proper and that
  $P$ is contained in a group $Q$. If $\lambda_i$ converges to
  $\lambda$, $\mu_i$ converges to $\mu$ and for all $i$,
  $\mu_i\le\lambda_i$, then $\mu\le\lambda$.
\end{lemma}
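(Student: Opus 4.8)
The plan is to exploit $(r,d)$-properness to produce a limit for the ``tails'' of the $\lambda_i$ and then pass to the limit in the factorizations $\lambda_i=\mu_i\nu_i$.

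First I would normalize the degrees. Since $d$ is continuous and $P$ is discrete, $d$ is locally constant; as $\mu_i\to\mu$ and $\lambda_i\to\lambda$, this yields an index beyond which $d(\mu_i)=m:=d(\mu)$ and $d(\lambda_i)=d(\lambda)$. For each $i$ the hypothesis $\mu_i\le\lambda_i$ furnishes $\nu_i\in\Lambda$ with $\lambda_i=\mu_i\nu_i$, and equating degrees via $d(\mu_i\nu_i)=d(\mu_i)d(\nu_i)$ shows $d(\nu_i)=d(\mu_i)^{-1}d(\lambda_i)$ in $Q$, which is eventually the fixed element $n:=m^{-1}d(\lambda)$. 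In particular $n\in P$ and $d(\lambda)=mn$, so $d(\mu)\le d(\lambda)$ already holds.

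Next I would control the $\nu_i$ using properness. From $\lambda_i=\mu_i\nu_i$ we have $r(\nu_i)=s(\mu_i)$, and continuity of $s$ gives $r(\nu_i)\to s(\mu)$. Choosing a compact neighborhood $C$ of $s(\mu)$ in $\Lambda^{(0)}$, we get $(r(\nu_i),d(\nu_i))\in C\times\{n\}$ for $i$ large. Since $(r,d)$ is proper, $K:=(r,d)^{-1}(C\times\{n\})$ is compact, so the net $(\nu_i)$ eventually lies in $K$; because $\Lambda$ is Hausdorff, it then admits a subnet $\nu_{i_j}\to\nu\in K$.

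Finally I would pass to the limit along this subnet. Continuity of $r$ and $s$ turns $r(\nu_{i_j})=s(\mu_{i_j})$ into $r(\nu)=s(\mu)$, so $(\mu,\nu)\in\Lambda^{(2)}$ and the product $\mu\nu$ is defined; continuity of composition then gives $\mu_{i_j}\nu_{i_j}=\lambda_{i_j}\to\mu\nu$. Since also $\lambda_{i_j}\to\lambda$ and $\Lambda$ is Hausdorff, $\lambda=\mu\nu$, that is, $\mu\le\lambda$. The only nonformal input is the compactness used to extract the limiting tail $\nu$, and that is precisely what $(r,d)$-properness supplies; everything else is continuity of $r,s,d$, continuity of composition, and uniqueness of limits in a Hausdorff space. (One could in fact bypass properness by observing that on the open set $\Lambda^{d(\lambda)}=\Lambda^{mn}$ the unique factorization property makes the ``degree-$m$ head'' map continuous, so that $\mu_i\to\mu$ identifies $\mu$ with the head of $\lambda$; but the properness route matches the stated hypothesis most directly.)
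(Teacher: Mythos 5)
Your proof is correct and follows essentially the same route as the paper's: factor $\lambda_i=\mu_i\nu_i$, use continuity of $d$ into the discrete $P$ together with cancellation in $Q$ to freeze $d(\nu_i)$, use $r(\nu_i)=s(\mu_i)$ and $(r,d)$-properness to extract a convergent subnet $\nu_{i_j}\to\nu$, and conclude $\lambda=\mu\nu$ by continuity of composition and uniqueness of limits --- you have merely made explicit the details the paper leaves implicit. Your closing parenthetical is also a sound observation: once the degrees stabilize, the unique factorization axiom makes the degree-$m$ head map on $\Lambda^{mn}$ continuous, giving $\mu\le\lambda$ without invoking properness at all.
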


\begin{proof}
  There exists a net $\nu_i$ such that $\lambda_i=\mu_i\nu_i$. Since
  $d(\lambda_i)=d(\mu_i)d(\nu_i)$, $d(\nu_i)$ is eventually
  constant. Since $r(\nu_i)=s(\mu_i)$, it is contained in some compact
  subset of $\Lambda^{(0)}$. Because of $(r,d)$-properness, there is a
  subnet $(\nu_i)$ converging to some $\nu$. Then $\lambda=\mu\nu$.
\end{proof}

\begin{lemma} Let $A$ be a subset of $\Lambda$.
  \begin{enumerate}
  \item If $A$ is directed (resp., hereditary), then $d(A)$ is
    directed (resp., hereditary).
  \item If $A$ is directed, the restriction to $A$ of the degree map
    $d_{|A}:A\to P$ is a bijection onto $d(A)$.
  \end{enumerate}
\end{lemma}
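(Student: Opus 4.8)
The plan is to lean on two facts already recorded in the excerpt: first, that $\mu\le\lambda$ in $\Lambda$ forces $d(\mu)\le d(\lambda)$ in $P$; and second, the unique factorization property in both its forms --- that for $n\le d(\lambda)$ there is a unique factorization $\lambda=\mu\nu$ with $d(\mu)=n$, and that more generally the composition map $\Lambda^{n}*\Lambda^{k}\to\Lambda^{nk}$ is a bijection. With these in hand, each assertion reduces to a short argument.

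For the directed half of (a), I would take $p,q\in d(A)$, say $p=d(\lambda)$ and $q=d(\mu)$ with $\lambda,\mu\in A$, and use directedness of $A$ to produce a common upper bound $\nu\in A$ of $\lambda$ and $\mu$. Since $\lambda\le\nu$ and $\mu\le\nu$ give $d(\lambda)\le d(\nu)$ and $d(\mu)\le d(\nu)$, the element $d(\nu)\in d(A)$ is a common upper bound of $p$ and $q$, and $d(A)$ is directed. For the hereditary half, I would take $p\le q$ with $q=d(\lambda)$ for some $\lambda\in A$; since $p\le d(\lambda)$, unique factorization writes $\lambda=\mu\nu$ with $d(\mu)=p$, whence $\mu\le\lambda\in A$ and heredity of $A$ give $\mu\in A$, so that $p=d(\mu)\in d(A)$.

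For (b), surjectivity of $d_{|A}:A\to d(A)$ is immediate from the definition of $d(A)$ as the image, so the content is injectivity, and this is the step I expect to be the crux. I would suppose $\lambda,\mu\in A$ satisfy $d(\lambda)=d(\mu)=:n$ and again use directedness to obtain $\nu\in A$ with $\lambda\le\nu$ and $\mu\le\nu$, say $\nu=\lambda\alpha=\mu\beta$. Comparing degrees in the ambient group $Q$ and cancelling $n$ yields $d(\alpha)=d(\beta)=:k$, so both $\lambda\alpha$ and $\mu\beta$ exhibit $\nu\in\Lambda^{nk}$ as the image of a pair in $\Lambda^{n}*\Lambda^{k}$; injectivity of the factorization map then forces $(\lambda,\alpha)=(\mu,\beta)$, and in particular $\lambda=\mu$. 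The essential point --- and the only place directedness is genuinely needed --- is that one must first bring $\lambda$ and $\mu$ beneath a single $\nu$ before the uniqueness of factorization can be invoked, after checking via cancellation in $Q$ that the complementary degrees agree.
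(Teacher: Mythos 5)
Your proposal is correct and follows essentially the same route as the paper: directedness of $d(A)$ via a common upper bound in $A$, heredity via unique factorization of $\lambda$ at the smaller degree, and injectivity in (b) by bringing $\lambda$ and $\mu$ under a common upper bound and invoking uniqueness of the factorization. Your only addition is to make explicit the cancellation in the ambient group $Q$ needed to match the complementary degrees, a step the paper leaves implicit in the phrase ``by unique factorization of $\lambda$.''
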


\begin{proof}
  Let $m,n\in d(A)$. There exist $\mu,\nu\in A$ such that $m=d(\mu)$
  and $n=d(\nu)$. If $A$ is directed, there exists $\lambda\in A$ such
  that $\mu,\nu\le\lambda$. Then $m,n\le d(\lambda)$. Therefore $d(A)$
  is directed. Suppose that $m\le n$ and that $n=d(\lambda)$ with
  $\lambda\in A$. We write $n=mp$. By unique factorization, we can
  write $\lambda=\mu\pi$, where $d(\mu)=m$ and $d(\pi)=p$. If $A$ is
  hereditary, then $\mu\in A$ and $m\in d(A)$. Therefore, $d(A)$ is
  hereditary.

  Suppose that $A$ is directed. Let $\mu, \nu\in A$ such that
  $d(\mu)=d(\nu)$. Let $\lambda$ be a c.u.b.\ of $(\mu, \nu)$. By
  unique factorization of $\lambda$, we have the equality $\mu=\nu$.
\end{proof}

\begin{lemma}
  \label{lem-An} Let $\Lambda$ be a $P$-graph.
  \begin{enumerate}
  \item If $A\subset\Lambda$ is hereditary (resp., directed) and $n\in
    P$, the subset
    \begin{equation*}
      A\ndot n=\set{\nu \in\Lambda:\text{there exists $\mu\in \Lambda^n$
          such that $ \mu\nu\in A$}}
    \end{equation*}
    is hereditary (resp., directed). If $(r,d)$ is proper and $A$ is
    directed and closed, then $A\ndot n$ is directed and closed.

  \item Assume $(r,d)$ is proper and that $P$ is a subsemigroup of a
    group $Q$. If $B\subset\Lambda$ is directed, hereditary and closed
    and if $\mu\in r(B)\Lambda$, then
    \begin{equation*}
      \mu B=\bigcup_{\nu\in B}\set{\lambda\in\Lambda:\lambda\le\mu\nu}
    \end{equation*}
    is directed, hereditary and closed. Moreover, if $A\ndot n$ is
    non-empty, there is a unique $\mu\in\Lambda^n\cap A$ such that
    $A=\mu(A\ndot n)$.
  \end{enumerate}
\end{lemma}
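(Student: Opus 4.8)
The plan is to dispose of the purely algebraic assertions (hereditary, directed, and the final factorization) first, since they follow from unique factorization together with the preorder calculus, and then to isolate the two closedness statements, which are where $(r,d)$-properness and Lemma~\ref{closedrelation} do the real work.

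For part (a) the hereditary and directed claims are formal. If $\nu'\le\nu\in A\ndot n$ with witness $\mu\in\Lambda^n$ and $\mu\nu\in A$, then $\mu\nu'$ is defined, $\mu\nu'\le\mu\nu$, so $\mu\nu'\in A$ when $A$ is hereditary, giving $\nu'\in A\ndot n$. For directedness, given $\nu_1,\nu_2\in A\ndot n$ with witnesses $\mu_1,\mu_2\in\Lambda^n$ and $\mu_1\nu_1,\mu_2\nu_2\in A$, I would take a c.u.b.\ $\lambda\in A$ of $\mu_1\nu_1$ and $\mu_2\nu_2$; then $n\le d(\lambda)$, and unique factorization forces the two degree-$n$ prefixes of $\lambda$ to coincide in a common value $\mu$, so that $\lambda\ndot n$ is a c.u.b.\ of $\nu_1,\nu_2$ lying in $A\ndot n$ (its witness being $\mu$). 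The closedness claim in (a) is the first place properness enters: if $\nu_i\in A\ndot n$ with $\nu_i\to\nu$ and witnesses $\mu_i\in\Lambda^n$, $\mu_i\nu_i\in A$, then $r(\mu_i)=r(A)$ (Lemma~\ref{rangemap}) and $d(\mu_i)=n$ are constant, so $(r,d)(\mu_i)$ sits in a single point; $(r,d)$-properness lets me pass to a subnet $\mu_{i_j}\to\mu$ with $d(\mu)=n$. Continuity of $r,s$ gives $s(\mu)=r(\nu)$, continuity of composition gives $\mu_{i_j}\nu_{i_j}\to\mu\nu$, and closedness of $A$ gives $\mu\nu\in A$, whence $\nu\in A\ndot n$.

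For part (b), hereditariness of $\mu B$ is immediate because $\mu B$ is a union of down-sets, and directedness follows from directedness of $B$ together with monotonicity of left multiplication ($\nu_1\le\nu\Rightarrow\mu\nu_1\le\mu\nu$), so a c.u.b.\ $\nu$ of $\nu_1,\nu_2$ in $B$ yields the c.u.b.\ $\mu\nu\in\mu B$. The final factorization I would prove directly: if $A\ndot n\neq\emptyset$ then $\Lambda^n\cap A\neq\emptyset$ (a witness $\mu$ satisfies $\mu\le\mu\nu\in A$, so $\mu\in A$ by hereditariness), and $\Lambda^n\cap A$ is a singleton since any two of its elements have a c.u.b.\ in $A$ and hence agree by unique factorization. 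Writing $\mu$ for this unique element and $B=A\ndot n$, the inclusion $\mu B\subseteq A$ uses hereditariness of $A$, while $A\subseteq\mu B$ uses directedness: given $\alpha\in A$, a c.u.b.\ $\lambda\in A$ of $\alpha$ and $\mu$ satisfies $\mu\le\lambda$, so $\nu:=\lambda\ndot n\in A\ndot n$ and $\alpha\le\lambda=\mu\nu$.

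The hard part will be the closedness of $\mu B$. Suppose $\lambda_i\in\mu B$ with $\lambda_i\to\lambda$ and $\lambda_i\le\mu\nu_i$, $\nu_i\in B$; since $d$ is continuous into a discrete set, $d(\lambda_i)$ is eventually a constant $p$. The naive idea of passing to a limit of the $\mu\nu_i$ fails, because $d(\nu_i)$ is uncontrolled and the $\mu\nu_i$ may run off to an infinite path. The device I would use is to replace $\mu\nu_i$ by its \emph{least common extension} $\rho_i$ of $\lambda_i$ and $\mu$, i.e.\ the prefix of $\mu\nu_i$ whose degree is the least upper bound $p\vee n$ (dominated by $d(\mu\nu_i)$). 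Writing $\rho_i=\lambda_i\tau_i=\mu\sigma_i$, the degrees $d(\tau_i)=(p\vee n)p^{-1}$ and $d(\sigma_i)=(p\vee n)n^{-1}$ are now \emph{constant}, while $r(\tau_i)=s(\lambda_i)\to s(\lambda)$ and $r(\sigma_i)=s(\mu)$ is constant; so $(r,d)$-properness supplies a subnet along which $\tau_i\to\tau$ and $\sigma_i\to\sigma$. Continuity of composition then gives $\rho_i\to\lambda\tau=\mu\sigma=:\rho$, so $\lambda\le\rho=\mu\sigma$. Finally, cancelling $\mu$ in $\mu\sigma_i=\rho_i\le\mu\nu_i$ (legitimate because $Q$ is a group and by unique factorization) yields $\sigma_i\le\nu_i\in B$, so $\sigma_i\in B$ by hereditariness and $\sigma\in B$ by closedness of $B$; hence $\lambda\le\mu\sigma$ with $\sigma\in B$, that is $\lambda\in\mu B$. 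The whole subtlety lies in the degree bookkeeping behind the choice of $\rho_i$: passing to the \emph{least} common extension (rather than to $\mu\nu_i$ itself) is precisely what keeps the connecting segments $\tau_i,\sigma_i$ of bounded degree and hence in a compact set, so that $(r,d)$-properness and Lemma~\ref{closedrelation} can be applied; this is the step that genuinely uses the order structure of $P$.
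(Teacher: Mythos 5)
Your handling of the hereditary and directed assertions in (a), the closedness in (a), and the final factorization $A=\mu(A\ndot n)$ is correct and essentially the same as the paper's: the paper likewise identifies the degree-$n$ prefixes via unique factorization of a c.u.b., uses the constancy of $(r(\mu_i),d(\mu_i))=(r(A),n)$ together with $(r,d)$-properness to extract a convergent subnet of witnesses, and proves $\Lambda^n\cap A$ is a singleton before verifying the two inclusions $A\subseteq\mu(A\ndot n)$ (via directedness) and $\mu(A\ndot n)\subseteq A$ (via uniqueness of the prefix and hereditariness), exactly as you do.

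The one place you genuinely diverge is the closedness of $\mu B$, and there your argument imports a hypothesis the lemma does not state: forming the least upper bound $p\vee n$ presupposes that $P$ is \emph{quasi-lattice ordered}, whereas part (b) assumes only that $P$ is a subsemigroup of a group $Q$ and that $(r,d)$ is proper. (A small separate slip: with the convention $m\le m'$ iff $m'=mn$, the degree of $\tau_i$ is $p^{-1}(p\vee n)$, not $(p\vee n)p^{-1}$ --- immaterial for abelian $Q$ but not in general.) Granting quasi-lattice order, your least-common-extension argument is correct: $d(\tau_i)$ and $d(\sigma_i)$ are constant, $r(\tau_i)=s(\lambda_i)$ converges and $r(\sigma_i)=s(\mu)$ is constant, so $(r,d)$-properness yields convergent subnets, and cancelling $\mu$ by unique factorization puts $\sigma_i$, hence the limit $\sigma$, in $B$. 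The paper instead splits into the cases $d(\lambda)\le d(\mu)$ and its negation, concluding $\lambda_\beta\le\mu$ in the first case and writing $\lambda_\beta=\mu\nu_\beta$ with $\nu_\beta\in B$ in the second; note that this second step tacitly assumes $d(\mu)\le d(\lambda_\beta)$, which does not follow from the failure of $d(\lambda_\beta)\le d(\mu)$ once degrees can be incomparable (e.g.\ $P=\N^2$, $d(\mu)=(2,0)$, $d(\lambda_\beta)=(1,1)$, where $\lambda_\beta$ is a genuine ``diagonal'' prefix of $\mu\nu_\beta$). So your extension trick in fact covers a configuration the paper's dichotomy passes over --- but only by invoking quasi-lattice order. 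Since every application of Lemma~\ref{lem-An} in the paper (the proposition on $\tilde\Omega$ and the theorem on the shift $T$) assumes $P$ quasi-lattice ordered, your proof suffices for all of the lemma's uses; as a proof of part (b) exactly as stated, however, it is incomplete, and you should either add the quasi-lattice hypothesis explicitly or produce a common extension of $\lambda_i$ and $\mu$ of uniformly bounded degree without appealing to a least upper bound.
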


\begin{proof}
  (a) Suppose that $A$ is hereditary . Let $b\le (a\ndot n)$ where
  $a\in A$.  Then, $a=\mu(a\ndot n)$ where $d(\mu)=n$.  Moreover,
  $a\ndot n=b\ndot c$ for some $c\in P$. Thus, $a=\mu (b\ndot c)=(\mu
  b)\ndot c$, and $\mu b\le a$. Since $A$ is hereditary, $\mu b \in
  A$, hence $b\in A\ndot n$. We have shown that $A\ndot n$ is
  hereditary.

  Suppose that $A$ is directed. Consider $a\ndot n$ and $ b\ndot n$
  where $a,b\in A$. We want a c.u.b.\ for $a\ndot n, b\ndot n$ in
  $A\ndot n$. Let $c$ be a c.u.b.\ for $a, b$ in $A$. Then $c\ndot n$
  is defined and is a c.u.b.\ for $a\ndot n$ and $ b\ndot n$.

  Suppose that $A$ is directed and closed, that $a_i$ belongs to $A$
  and that $a_i \ndot n$ converges to some $b\in \Lambda$. We write
  $a_i=\mu_i(a_i\ndot n)$, where $d(\mu_i)=n$. Since we also have
  $r(\mu_i)=r(A)$, by $(r,d)$-properness, there is a subnet $(\mu_j)$
  converging to some $\mu$ such that $r(\mu)=r(a), s(\mu)=r(b)$ and
  $d(\mu)=n$ Then $a_j$ converges to $a=\mu (b)$. Since $A$ is closed,
  $a$ belongs to $A$ and $b=a\ndot n$ belongs to $A\ndot n$.

  (b) To see that $\mu B$ is directed, suppose that $\lambda\in B$ is
  a c.u.b.\ for $\nu_1,\nu_2\in B$. Then $\mu\lambda$ is a c.u.b.\ for
  $\mu\nu_1,\mu\nu_2$. The set $\mu B$ is hereditary by
  construction. To see that it is closed, consider a net
  $\lambda_\alpha$ in $\mu B$ converging to $\lambda$. We distinguish
  two cases: if $d(\lambda)\le d(\mu)$, then $d(\lambda_\alpha)\le
  d(\mu)$ for $\alpha$ large enough. This implies
  $\lambda_\alpha\le\mu$, hence $\lambda\le\mu$. If $d(\lambda)\le
  d(\mu)$ does not hold, there is a subnet $\lambda_\beta$ for which
  $d(\lambda_\beta)\le d(\mu)$ does not hold. Then, we can write
  $\lambda_\beta=\mu\nu_\beta$ with $\nu_\beta\in B$. Since
  $d(\lambda_\beta)= d(\lambda)$ for $\beta$ large enough, this fixes
  $d(\nu_\beta)$ (if $P\subset Q$). We also have
  $r(\nu_\beta)=s(\mu)$. By $(r,d)$-properness, there is a converging
  subnet $\nu_\gamma$. Its limit $\nu$ belongs to $B$ because $B$ is
  closed. We have $\lambda=\mu\nu$, hence $\lambda$ is in $\mu B$.

  If $A\ndot n$ is non-empty, there is $\lambda\in A$ such that
  $d(\lambda)\ge n$. We write $\lambda=\mu\nu$ with $\mu\in
  \Lambda^n$. Since $\mu\le\lambda$, $\mu$ belongs to $A$. Let
  $\lambda'$ be another element of $A$ such that $d(\lambda')\ge n$. We
  can write $\lambda'=\mu'\nu'$. The unique factorization of a c.u.b.\
  of $(\lambda,\lambda')$ gives $\mu=\mu'$. Let us compare $A$ and
  $\mu(A\ndot n)$. If $\lambda\in A$, the existence of a c.u.b.\ for
  $(\lambda,\mu)$ shows that $\lambda\in\mu(A\ndot n)$. Conversely,
  suppose that $\lambda\le\mu\nu$ for $\nu\in A\cdot n$. There exists
  $\mu'\in\Lambda^n$ such that $\mu'\nu\in A$. By the above,
  $\mu'=\mu$, therefore $\mu\nu\in A$, hence $\lambda\in A$.
\end{proof}

\begin{prop} Let $\Lambda$ be a $P$-graph, where $P$ is a
  quasi-lattice ordered subsemigroup of a group $Q$ and $\Lambda$ is
  $(r,d)$-proper.
  \begin{enumerate}
  \item the set $\tilde\Omega$ of all hereditary and directed closed
    subsets of $\Lambda$ is a closed subset of the space ${\mathcal
      C}(\Lambda)$ of closed subsets of $\Lambda$ equipped with the
    Fell topology.
  \item for all $\lambda\in\Lambda$, $F(\lambda):=\{\mu\in\Lambda:
    \mu\le\lambda\}$ belongs to $\tilde\Omega$;
  \item $F(\Lambda)$ is dense in $\tilde\Omega$;
  \item if $n\le d(\lambda)$, $F(\lambda)\ndot n=F(\lambda \ndot n)$;
    if $s(\mu)=r(\lambda)$, $\mu F(\lambda)=F(\mu\lambda)$;
  \item the map $F:\Lambda\to {\mathcal C}(\Lambda)$ is injective and
    continuous.
  \end{enumerate}
\end{prop}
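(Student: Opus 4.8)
The plan is to treat the five assertions in turn, reserving most of the effort for the closedness statement (a); parts (b)--(e) will then fall out from the unique factorization property, transitivity of the preorder $\le$, and Lemma~\ref{closedrelation}. Throughout I would use that $d$ is continuous into the discrete group $Q$, so each $\Lambda^m=d^{-1}(m)$ is open, and that $(r,d)$-properness together with quasi-lattice orderedness forces $\Lambda$ to be compactly aligned, i.e.\ $A\vee B$ is compact whenever $A,B$ are.

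For (a) I would take a net $(F_\beta)$ in $\tilde\Omega$ converging in $\mathcal C(\Lambda)$ to a closed set $F$ and check heredity and directedness via the subnet characterization (F1)--(F2). For heredity, suppose $\lambda\in F$ and $\mu\le\lambda$; by (F2) pass to a subnet with $\lambda_j\in F_{i_j}$, $\lambda_j\to\lambda$. Since $d(\lambda_j)=d(\lambda)$ eventually, the unique factorization property lets me split $\lambda_j=\mu_j\nu_j$ with $d(\mu_j)=d(\mu)$, and continuity of the inverse of $\Lambda^{d(\mu)}*\Lambda^{d(\nu)}\to\Lambda^{d(\lambda)}$ gives $\mu_j\to\mu$. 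As $\mu_j\le\lambda_j\in F_{i_j}$ and $F_{i_j}$ is hereditary, $\mu_j\in F_{i_j}$, so $\mu\in F$ by (F1).

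The directedness of $F$ is where I expect the genuine difficulty. Given $\lambda,\mu\in F$, I would apply (F2) twice (passing to a common subnet) to obtain $\lambda_j,\mu_j\in F_{i_j}$ with $\lambda_j\to\lambda$ and $\mu_j\to\mu$. Since $F_{i_j}$ is directed, $\lambda_j,\mu_j$ have a common upper bound there, and as $P$ is quasi-lattice ordered they then have a least upper bound $\tau_j$, which lies in the hereditary set $F_{i_j}$. The obstacle is to extract a limit of the $\tau_j$: here I invoke compact alignment by choosing compact neighborhoods $A\ni\lambda$, $B\ni\mu$ with $\lambda_j\in A$, $\mu_j\in B$ eventually, so that $\tau_j\in A\vee B$, a \emph{compact} set. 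A convergent subnet $\tau_j\to\tau$ then lands in $F$ by (F1), and Lemma~\ref{closedrelation} applied to $\lambda_j\le\tau_j$ and $\mu_j\le\tau_j$ yields $\lambda\le\tau$ and $\mu\le\tau$; thus $\tau\in F$ is a common upper bound, and $F$ is directed.

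For (b), heredity and directedness of $F(\lambda)$ are immediate from transitivity of $\le$ and the fact that $\lambda$ is itself a common upper bound of any two of its predecessors, while closedness follows from Lemma~\ref{closedrelation} with the constant net $\lambda_i\equiv\lambda$. For (c), given $A\in\tilde\Omega$ and a basic neighborhood $\mathcal U(K;U_1,\dots,U_m)$ with $m\ge 1$, I pick $\mu_i\in A\cap U_i$ and use directedness of $A$ to build, by induction, a common upper bound $\lambda\in A$ of $\mu_1,\dots,\mu_m$; then $\mu_i\le\lambda$ gives $F(\lambda)\cap U_i\ne\emptyset$, while heredity of $A$ gives $F(\lambda)\subset A$ and hence $F(\lambda)\cap K=\emptyset$ (the degenerate case $A=\emptyset$ is handled by taking a vertex $v\notin r(K)$, for which $F(v)=\{v\}$ misses $K$, possible precisely when $\Lambda^{(0)}$ is noncompact). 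Part (d) is a direct computation: writing $\lambda=\alpha\beta$ with $d(\alpha)=n$, both inclusions in $F(\lambda)\ndot n=F(\lambda\ndot n)$ reduce to unique factorization, and $\mu F(\lambda)=F(\mu\lambda)$ follows since $\nu\le\lambda$ implies $\mu\nu\le\mu\lambda$, while conversely the term $\nu=\lambda$ in the defining union of $\mu F(\lambda)$ already exhausts $F(\mu\lambda)$. Finally, for (e), injectivity holds because $F(\lambda)=F(\mu)$ forces $\lambda\le\mu$ and $\mu\le\lambda$, which by $P\cap P^{-1}=\{e\}$ gives $\lambda=\mu$; continuity is exactly the (F1)--(F2) verification from part (a) applied to the nets $F(\lambda_\alpha)$, using Lemma~\ref{closedrelation} for (F1) and continuity of factorization for (F2).
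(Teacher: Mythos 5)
Your proof is correct, and its skeleton is the same as the paper's: the (F1)--(F2) net characterization of Fell convergence, compact alignment (which the paper derives from quasi-lattice order plus $(r,d)$-properness, as you do) to extract a convergent subnet of least upper bounds in the directedness argument, Lemma~\ref{closedrelation} to pass order relations to limits, and the common-upper-bound trick for density in (c). Two local steps differ in technique, though not in substance. For heredity in (a), the paper separates $\mu$ from $A$ by a compact $K$ and an open $U$ and derives a contradiction from $A_\alpha\cap U\Lambda\not=\emptyset$ versus $A_\alpha\cap K=\emptyset$, using that composition is an \emph{open} map; you instead split $\lambda_j=\mu_j\nu_j$ and invoke continuity of the inverse of the factorization homeomorphism $\Lambda^{d(\mu)}*\Lambda^{d(\nu)}\to\Lambda^{d(\lambda)}$, which is cleaner and leans on the unique factorization axiom rather than openness of composition. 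Similarly for continuity of $F$ in (e), the paper argues directly on a basic neighborhood $\mathcal U(K;U_1,\dots,U_n)$ (compactness of $K$ plus Lemma~\ref{closedrelation} for the $K$-condition, openness of $U_i\Lambda$ for the $U_i$-conditions), whereas you verify (F1) via Lemma~\ref{closedrelation} and (F2) via factorization continuity; both are valid. Finally, you are actually more careful than the paper on one point: in (c) the paper's net $(F(\lambda))_{\lambda\in A}$ is vacuous when $A=\emptyset$, which the proof silently ignores, and your observation that $\emptyset\in\overline{F(\Lambda)}$ holds exactly when $\Lambda^{(0)}$ is noncompact (since $r(\lambda)\in F(\lambda)$ always) is correct --- as stated, (c) really concerns the nonempty elements of $\tilde\Omega$, i.e.\ $\Omega$, when $\Lambda^{(0)}$ is compact.
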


\begin{proof} (a) Suppose that $A_\alpha$ converges to $A$ in
  ${\mathcal C}(\Lambda)$ and that the $A_\alpha$'s are hereditary and
  directed. Let us show that $A$ is hereditary. Let $\lambda\in A$ and
  $\mu\le\lambda$. Suppose that $\mu\notin A$. Since $A$ is closed,
  there is an open set $U$ and a compact set $K$ such that $\mu\in
  U\subset K\subset \Lambda\setminus A$. The set
  $U\Lambda=\set{\alpha\beta:\text{$\alpha\in U$ and
      $\beta\in\Lambda$}}$ is open (because multiplication is assumed
  to open and continuous), and contains $\lambda$. Thus we have $A\cap
  K=\emptyset$ and $A\cap UP\not=\emptyset$. There is $\alpha_0$ such
  that for $\alpha\ge\alpha_0$, $A_\alpha\cap K=\emptyset$ and
  $A_\alpha\cap U\Lambda\not=\emptyset$. If $\lambda_\alpha$ belongs
  to $A_\alpha\cap U\Lambda$, there exists $\mu_\alpha\in U$ such that
  $\mu_\alpha\le\lambda_\alpha$. Since $A_\alpha$ is hereditary,
  $\mu_\alpha$ belongs to $A_\alpha$. This contradicts $A_\alpha\cap
  K=\emptyset$.

  Let us show that $A$ is directed. Let $\mu, \nu$ be in $A$. There
  exist nets $\mu_\alpha$ and $\nu_\alpha$ in $A_\alpha$ converging
  respectively to $\mu$ and $\nu$. Let $\lambda_\alpha$ be a l.u.b.\
  of $(\mu_\alpha,\nu_\alpha)$ belonging to $A_\alpha$. (Such a
  l.u.b.\ exists because $A_\alpha$ is directed and hereditary and $P$
  is quasi-lattice ordered). Since $P$ is quasi-lattice ordered
    and $(r,d)$ is proper, $\Lambda$ is compactly aligned. Hence
  the net $\lambda_\alpha$ has a convergent subnet. Let $\lambda$ be
  its limit. Since $A_\alpha$ converges to $A$, $\lambda$ is in $A$
  (by (F1)). Since $\mu_\alpha,\nu_\alpha\le \lambda_\alpha$,
  Lemma~\ref{closedrelation} gives $\mu,\nu\le \lambda$. This shows
  that $A$ is directed.

  (b) The set $F(\lambda)$ is obviously hereditary and
  directed. According to Lemma~\ref{closedrelation}, it is closed.
  
  (c) Let $A$ be a hereditary and directed closed subset of
  $\Lambda$. It will suffice to see that the net
  $(F(\lambda))_{\lambda\in A}$ converges to $A$.  Let $K$ be a
  compact set in $\Lambda$ and $U_1,\ldots, U_n$ such that $A\cap
  K=\emptyset$ and $A\cap U_i\not=\emptyset$. Pick $\lambda_i\in A\cap
  U_i$ and let $\underline\lambda$ be a c.u.b.\ of $\lambda_1,\ldots,
  \lambda_n$ in $A$. If $\lambda\ge\underline\lambda$, $F(\lambda)\cap
  U_i\not=\emptyset$ because it contains $\lambda_i$. Since
  $F(\lambda)$ is contained in $A$, its intersection with $K$ is
  empty.

  (d) Straightforward.

  (e) Since we assume that $P\cap P^{-1}=\{e\}$, the relation $\le$ is
  an order relation, hence the injectivity of $F$. Suppose that
  $\lambda_\alpha\to\lambda$. Let $K$ be a compact subset of $\Lambda$
  and let $U_1,\ldots, U_n$ be
  open subsets of $\Lambda$ such that $F(\lambda)\cap K=\emptyset$ and
  $F(\lambda)\cap U_i\not=\emptyset$. If there is no $\alpha_0$ such
  that $F(\lambda_\alpha)\cap K=\emptyset$ for all
  $\alpha\ge\alpha_0$, there are subnets $\mu_\beta\le\lambda_\beta$
  with $\mu_\beta\to\mu$ in $K$. This is not possible since we have
  then $\mu\le\lambda$.  By assumption, $\lambda$ belongs to the open
  set $U_1\Lambda\cap\ldots\cap U_n\Lambda$, therefore there exists
  $\alpha_1\ge\alpha_0$ such that for all $\alpha\ge\alpha_1$,
  $\lambda_\alpha\in U_1\Lambda\cap\ldots\cap U_n\Lambda$.  Hence we
  eventually have $F(\lambda_{\alpha})\cap U_{i}\not=\emptyset$.
\end{proof}

We shall see later that the map $F:\Lambda\to {\mathcal C}(\Lambda)$
is not necessarily a homeomorphism onto its image
(Remark~\ref{rem-last}(c)).

Recall that at the beginning of the subsection we defined
$\Omega=\tilde\Omega\setminus\{\emptyset\}$.  We now define
\begin{align*}
  \Omega*P&=\set{(A,n)\in\Omega\times P: A\ndot n\not=\emptyset} \\
  &= \set{(A,n)\in \Omega\times P: \text{there exists $\lambda\in A$
      such that $d(\lambda)\ge n$}},
\end{align*}
and $T:\Omega*P\to \Omega$ sending $(A,n)$ to $A\ndot n$. As before,
we define
\begin{equation*}
  U(n)=\set{A\in \Omega: (A,n)\in \Omega*P}\quad\text{and}\quad
  V(n)=\set{A\ndot n: (A,n)\in \Omega*P}
\end{equation*}
and we denote by $T_n: U(n)\to V(n)$ the map sending $A$ to $A\ndot
n$.

\begin{definition} We define a \emph{path} in $\Lambda$ as a non-empty
  hereditary closed subset of $\Lambda$. The space $\Omega$ is called
  the \emph{path space} of $\Lambda$. The above map $T$ is called the
  \emph{shift} on the path space.
\end{definition}

\begin{thm} Let $\Lambda$ be a $P$-graph, where $P$ is quasi-lattice
  ordered and $\Lambda$ is $(r,d)$-proper. Let $(\Omega, P, T)$ be as
  above. Then,
$T$ is a directed action of $P$ on $\Omega$ by partial local
homeomorphisms. 
\end{thm}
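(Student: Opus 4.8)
The plan is to verify, in turn, the two partial-action axioms, the directedness condition, and finally the topological assertion that each $T_n\colon U(n)\to V(n)$ is a local homeomorphism between open sets. The algebraic engine throughout is the unique factorization property together with Lemma~\ref{lem-An}(b), which supplies, for each $A$ with $A\ndot n\neq\emptyset$, a \emph{unique} $\mu\in\Lambda^n\cap A$ with $A=\mu(A\ndot n)$, and which guarantees that $\mu B$ and $A\ndot n$ remain in $\Omega$. The first action axiom is immediate: any non-empty $A$ has $(A,e)\in\Omega*P$, and $A\ndot e=A$ because $\Lambda^e=\Lambda^{(0)}$. For the second axiom I would prove $A\ndot(mn)=(A\ndot m)\ndot n$ directly from unique factorization: if $\sigma\rho\in A$ with $d(\sigma)=mn$, factor $\sigma=\mu\tau$ with $d(\mu)=m$ and $d(\tau)=n$, so $\tau\rho\in A\ndot m$ and hence $\rho\in(A\ndot m)\ndot n$, and conversely. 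The same computation shows $A\ndot(mn)\neq\emptyset$ iff $A\ndot m\neq\emptyset$ and $(A\ndot m)\ndot n\neq\emptyset$, which is exactly the membership condition for $\Omega*P$, while Lemma~\ref{lem-An}(a) keeps the result in $\Omega$.

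For directedness, suppose $A\in U(m)\cap U(n)$. Then $A$ contains elements $\lambda_1,\lambda_2$ with $d(\lambda_1)\ge m$ and $d(\lambda_2)\ge n$, and since $A$ is directed it contains a common upper bound $\lambda$ of $\lambda_1,\lambda_2$, so $m,n\le d(\lambda)$. As $P$ is quasi-lattice ordered and $m,n$ now have a common upper bound, the least upper bound $r=m\vee n$ exists and satisfies $r\le d(\lambda)$, whence $A\in U(r)$. Thus $U(m)\cap U(n)\subset U(r)$ for the upper bound $r=m\vee n$, which by the remark following Definition~\ref{def-directed} is all that directedness requires.

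For the topology I would first record $U(n)=\set{A\in\Omega:A\cap W_n\neq\emptyset}$, where $W_n=\set{\lambda:n\le d(\lambda)}$ is open in $\Lambda$; since $\set{F:F\cap W_n\neq\emptyset}$ is a subbasic Fell-open set, $U(n)$ is open. Likewise $V(n)=r^{-1}\bigl(s(\Lambda^n)\bigr)$: indeed $B\in V(n)$ precisely when some $\mu\in\Lambda^n$ has $s(\mu)=r(B)$, in which case $\mu B\in\Omega$ and $(\mu B)\ndot n=B$ by Lemma~\ref{lem-An}(b); as $s(\Lambda^n)$ is open and $r\colon\Omega\to\Lambda^{(0)}$ is continuous (Lemma~\ref{rangemap}), $V(n)$ is open. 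To obtain the local homeomorphism property, fix $A_0\in U(n)$ with $\mu_0=\mu_{A_0}\in\Lambda^n\cap A_0$ and, using that $s$ is a local homeomorphism, choose an open $W\subset\Lambda^n$ containing $\mu_0$ on which $s$ is a homeomorphism onto $s(W)$. Then $\mathcal N=\set{A:A\cap W\neq\emptyset}$ is a Fell-open neighbourhood of $A_0$ lying in $U(n)$, on which the degree-$n$ element $\mu_A$ belongs to $W$ and is pinned down by $r(A\ndot n)=s(\mu_A)$. I would define the candidate inverse $\sigma(B)=\mu_B\,B$ with $\mu_B=(s|_{W})^{-1}(r(B))$, and check $T_n\circ\sigma=\id$ (from $(\mu_B B)\ndot n=B$) and $\sigma\circ T_n=\id$ on $\mathcal N$ (since $\mu_B=\mu_A$ there), both via the uniqueness in Lemma~\ref{lem-An}(b).

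The main obstacle is the continuity of $T_n$ and of the local inverse $\sigma$ in the Fell topology, which I would settle with the subnet criterion (F1)/(F2). For (F1), given $\nu_i\in A_i\ndot n$ with $\nu_i\to\nu$, I lift to the degree-$n$ prefixes $\mu_i\in\Lambda^n$ ($\mu_i\nu_i\in A_i$), use continuity of $r$ on $\Omega$ and $(r,d)$-properness to extract a subnet $\mu_j\to\mu$ with $d(\mu)=n$, and then apply continuity of composition together with (F1) for $A_j\to A$ to get $\mu\nu\in A$, i.e.\ $\nu\in A\ndot n$; for (F2) I would pull an approximating net for $\nu$ back through (F2) for $A$ and push it forward by the (continuous) shift on $\Lambda$. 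The continuity of $\sigma$ is symmetric, now using that $\mu_B$ depends continuously on $r(B)$ while $(r,d)$-properness controls the products $\mu_B B$. It is exactly compact alignment of $\Lambda$ — equivalently, $(r,d)$-properness with $P$ quasi-lattice ordered — that keeps these nets of prefixes in compact sets, and I expect this to be where the real work lies.
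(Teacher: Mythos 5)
Your proposal is correct and follows essentially the same route as the paper's proof: the same identification of $U(n)$ and $V(n)$ as Fell-open sets, the same local sections built from an open $W\subset\Lambda^n$ on which $s$ is injective with inverse $B\mapsto \mu_B B$ justified by the uniqueness in Lemma~\ref{lem-An}(b), and the same (F1)/(F2) subnet arguments in which $(r,d)$-properness extracts convergent subnets of the degree-$n$ prefixes (the paper likewise reduces continuity of the local inverse to continuity of the product map $\Lambda^n*\Omega\to\Omega$). If anything you are slightly more complete: you verify the partial-action axioms explicitly, and in the directedness step you produce the single uniform bound $r=m\vee n$ from the quasi-lattice order, a point the paper's final paragraph leaves implicit after showing only that $m$ and $n$ admit a common upper bound $d(\lambda)$ depending on $A$.
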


\begin{proof}
  First, we observe that $U(n)$ and $V(n)$ are open
  subsets. Indeed, we have
  \begin{equation*}
    U(n)=\{A\in\Omega: A\cap\bigcup\nolimits_{m\ge
      n}\Lambda^m\not=\emptyset\}\quad\hbox{and}\quad
    V(n)=r^{-1}(s(\Lambda^n)),
  \end{equation*}
  where $r:\Omega\to\Lambda^{(0)}$ is the range map defined in Lemma
  \ref{rangemap}. Let us show that $T_n:U(n)\to V(n)$ is a local
  homeomorphism. To show that $T_n$ is continuous, suppose that
  $A_\alpha$ converges to $A$. We need to show that $A_\alpha \ndot n$
  converges to $A\ndot n$. It suffices to show that every subnet
  $(A_\beta \ndot n)$ satisfies (F1) and (F2). Suppose that
  $\nu_\beta\in A_\beta \ndot n$ converges to $\nu$. Then there is
  $\mu_\beta\in\Lambda^n$ such that $\mu_\beta\nu_\beta\in
  A_\beta$. Then $r(\mu_\beta)=r(A_\beta)$ converges to $r(A)$. By
  $(r,d)$-properness, there is a subnet $\mu_\gamma\in \Lambda^n$
  which converges to some $\mu\in\Lambda^n$. Then $\mu\nu$ belongs to
  $A$ and $\nu$ belongs to $A\ndot n$. Condition (F2) is clear: let
  $\nu\in A\ndot n$. There exists $\mu\in\Lambda^n$ such that
  $\mu\nu\in A$. There is a subnet $A_\beta$ and $\lambda_\beta\in
  A_\beta$ such that $\lambda_\beta$ converges to $\mu\nu$. Then
  $\lambda_\beta \ndot n\in A_\beta \ndot n$ converges to $\nu$.

  Let $U$ be an open subset of $\Lambda^n$ such that $s_{|U}:U\to
  s(U)$ is a homeomorphism; we denote by $\sigma$ the inverse of
  $s_{|U}$. Then $\tilde U=\set{A\in U(n): A\cap U\not=\emptyset}$ is
  open. Each $A\in \tilde U$ contains a unique $\mu\in U$ which is
  given by $\mu=\sigma(r(A\ndot n))$. Thus, the restriction of $T_n$
  to $\tilde U$ is a bijection of $\tilde U$ onto $T_n(\tilde
  U)=r^{-1}(s(U))$ having as inverse map $B\mapsto \sigma\circ
  r(B)B$. To show that this inverse map is continuous, it suffices to
  show that the product map $\Lambda^n*\Omega\to\Omega$ sending $(\mu,
  B)$ to $\mu B$ is continuous. Consider a net $(\mu_\alpha,
  B_\alpha)$ converging to $(\mu, B)$. We will show that $\mu_\alpha
  B_\alpha$ converges to $\mu B$ by checking that every subnet
  $\mu_\beta B_\beta$ satisfies (F1) and (F2). For (F1), we proceed as
  in the proof of Lemma~\ref{lem-An}(b). Consider a net
  $\lambda_\beta\in\mu_\beta B_\beta$ converging to $\lambda$. We
  distinguish two cases: if $d(\lambda)\le d(\mu)$, then
  $d(\lambda_\beta)\le d(\mu_\beta)$ for $\beta$ large enough. This
  implies $\lambda_\beta\le\mu_\beta$, hence $\lambda\le\mu$. If
  $d(\lambda)\le d(\mu)$ does not hold, there is a subnet
  $\lambda_\gamma$ for which $d(\lambda_\gamma)\le d(\mu_\gamma)$ does
  not hold. Then, we can write $\lambda_\gamma=\mu_\gamma\nu_\gamma$
  with $\nu_\gamma\in B_\gamma$. Since $d(\lambda_\gamma)= d(\lambda)$
  and $d(\mu_\gamma)=d(\mu)$ for $\gamma$ large enough, this fixes
  $d(\nu_\gamma)$. We also have $r(\nu_\gamma)=s(\mu_\gamma)$. By
  $(r,d)$-properness, there is a converging subnet $\nu_\delta$. Its
  limit $\nu$ belongs to $B$ because $B$ is closed. We have
  $\lambda=\mu\nu$, hence $\lambda$ is in $\mu B$. Let us check
  (F2). Suppose that $\lambda$ belongs to $\mu B$. Suppose first that
  $d(\lambda)\le d(\mu)$. We have $n=pq$ with $p=d(\lambda)$. For
  $\beta$ large enough, $d(\mu_\beta)=n$ and we have a unique
  factorization $\mu_\beta=\lambda_\beta\rho_\beta$ where
  $d(\lambda_\beta)=p$. By $(r,d)$-properness, $\lambda_\beta$ has a
  subnet converging to some $\lambda'$; since $\rho_\beta=\mu_\beta
  \ndot p$ converges to $\mu \ndot p$, we have $\mu=\lambda'(\mu \ndot
  p)$; by unique factorization, $\lambda'=\lambda$. Therefore
  $\lambda_\beta$ converges to $\lambda$. Since $\lambda_\beta$
  belongs to $\mu_\beta B_\beta$, we are done if $d(\lambda)\le
  d(\mu)$. Suppose now that $\lambda=\mu\nu$ where $\nu\in B$. There
  exists a subnet $B_\gamma$ and $\nu_\gamma\in B_\gamma$ converging
  to $\nu$. Then $\mu_\gamma\nu_\gamma$ belongs to $\mu_\gamma
  B_\gamma$ and converges to $\mu\nu$; therefore, 
  $(\mu,B)\mapsto \mu B$ is continuous.

  To see that the action is directed, consider
  \begin{equation*}
    U(n)=\set{A\in \Omega: \text{there exists $ \lambda\in A$ such that
        $ d(\lambda)\ge n$}}. 
  \end{equation*}
  Assume that $U(m)\cap U(n)\not=\emptyset$ and let $A\in U(m)\cap
  U(n)$. There exist $\mu,\nu\in A$ such that $d(\mu)\ge m$ and
  $d(\nu)\ge n$. Since $A$ is directed, there exists $\lambda\in A$
  greater than $\mu$ and $\nu$. Then $d(\lambda)$ is greater than $m$
  and $n$. 
\end{proof}

Thus, under our assumptions on $\Lambda$ and $P$, we can construct the
semi-direct product groupoid $G(\Omega, P, T)$ according to
Proposition \ref{prop-semidirect-product-groupoid}.

\begin{definition} Let $\Lambda$ be a $P$-graph, where $P$ is
  quasi-lattice ordered and $\Lambda$ is $(r,d)$-proper and let $T$ be
  the action of $P$ on the path space $\Omega$. The groupoid
  $G(\Omega, P, T)$ is called the \emph{Toeplitz groupoid} of the topological
  higher rank graph $\Lambda$. Its C*-algebra is called the \emph{Toeplitz
  algebra} of $\Lambda$ and denoted by $C^*(\Lambda)$.
\end{definition}

This construction is the same as in the work of T. Yeend, for example
\cite{yee:jot07}. The main differences are that we consider an
arbitrary quasi-lattice ordered semigroup $P$ rather than $\N^d$ and
that we make an explicit (rather than implicit) use of the Fell
topology on a space of closed subsets to define the topological path
space $\Omega$. In \cite{yee:jot07}, the path space $\Omega$ is
denoted by $X_\Lambda$ and the groupoid $G(\Omega, P, T)$, called the
path groupoid, is denoted by $G_\Lambda$.

\subsection{The boundary path space \boldmath $\partial\Omega$}

We continue to assume that $P$ is quasi-lattice ordered and that
$\Lambda$ is $(r,d)$-proper.  In particular, $\Lambda$ is compactly
aligned. 
The Cuntz-Krieger algebra of the $P$-graph $\Lambda$ is described in
\cite{yee:jot07} as the C*-algebra of the reduction of $G(\Omega, P,
T)$ to a closed invariant subset $\partial\Omega$ called the boundary
path space. Let us describe the boundary path space in our
presentation. Recall that the elements of $\Omega$ are the non-empty
closed hereditary and directed subsets of $\Lambda$.

\begin{definition}
  Let $\Lambda$ be a $P$-graph. We say that $E\subset\Lambda$ is
  \emph{exhaustive} if for all $\lambda\in\Lambda$ such that $r(\lambda)\in
  r(E)$, there exists $\mu\in E$ such that $(\lambda,\mu)$ has a
  c.u.b.
\end{definition}

\begin{definition} Let $\Lambda$ be a $P$-graph.
  \begin{enumerate}
  \item Given $A\in\Omega$, we say that $\lambda\in A$ is \emph{extendable
    in $A$} if for all $E\subset\Lambda$ which are exhaustive, compact
    and such that $r(E)$ is a neighborhood of $s(\lambda)$, there
    exists $\mu\in E$ such that $\lambda\mu\in A$.
  \item We say that $A\in \Omega$ is a \emph{boundary path} if all its
    elements are extendable in $A$.
  \end{enumerate}
  We define the \emph{boundary path space} $\partial\Omega$ as the subspace
  of all boundary paths.
\end{definition}

\begin{example} [Singly Generated Systems] Recall that this means a
  local homeomorphism $T:U\to V$, where $U,V$ are open subsets of a
  locally compact Hausdorff space $X$ and that
  \begin{equation*}
    \Lambda=X*\N=\set{(x,n)\in X\times \N: x\in U(n):=\operatorname{dom}(T^n)}
  \end{equation*}
  The non-empty closed hereditary directed subsets of $\Lambda$ are:
  \begin{equation*}
    F(x,n)=\set{(x,m): m\le n}
  \end{equation*}
  where $n\in\overline\N:=\N\cup\{\infty\}$, $x\in U(n)$ if $n$ is
  finite and $x\in U(\infty)=\bigcap U(n)$ if $n=\infty$. The boundary
  paths are:
$$F(x, \tau(x))\quad \hbox{where}\quad \tau(x)=\sup\{n\in\N: x\in U(n)\}$$
Note that here the boundary paths are exactly the maximal paths.
\end{example}

\begin{lemma}\label{extendability-is-hereditary} Let $A\in\Omega$. If
  $\lambda'\in A$ is extendable in $A$, then every
  $\lambda\le\lambda'$ is extendable in $A$.
\end{lemma}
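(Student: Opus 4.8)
The plan is to deduce the extendability of $\lambda$ from that of $\lambda'$ by transporting test sets through the factor $\nu$, where $\lambda'=\lambda\nu$. First note that $\lambda\in A$: since $\lambda\le\lambda'\in A$ and $A$ is hereditary, $\lambda$ is a legitimate candidate. So fix an exhaustive compact $E\subseteq\Lambda$ with $r(E)$ a neighborhood of $s(\lambda)=r(\nu)$; I must produce $\mu\in E$ with $\lambda\mu\in A$. The idea is to manufacture from $E$ an exhaustive compact set $E'$ with $r(E')$ a neighborhood of $s(\lambda')=s(\nu)$, feed it to the extendability of $\lambda'$, and pull the resulting extension back across $\nu$.

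To build $E'$, use that $s$ is a local homeomorphism: choose an open, relatively compact $W\ni\nu$ on which $s$ is injective (hence $s|_{\overline W}$ is a homeomorphism onto the open set $s(W)$), $d\equiv d(\nu)$, and $r(\overline W)\subseteq r(E)$ (possible since $r(\nu)=s(\lambda)\in\operatorname{int}r(E)$ and $r$ is continuous). For $x\in s(\overline W)$ write $\nu_x$ for the unique element of $\overline W$ with $s(\nu_x)=x$; thus $\nu_{s(\nu)}=\nu$, $d(\nu_x)=d(\nu)$ and $r(\nu_x)\in r(E)$. Set $L=\overline W$ and
\begin{equation*}
  E'=T_{d(\nu)}(L\vee E)=\set{\sigma\ndot d(\nu):\sigma\in L\vee E}.
\end{equation*}
Since $P$ is quasi-lattice ordered and $\Lambda$ is $(r,d)$-proper, $\Lambda$ is compactly aligned, so $L\vee E$ is compact; each $\sigma\in L\vee E$ is a l.u.b.\ of some pair $(\nu_x,\mu)$ with $\nu_x\le\sigma$ and $d(\nu_x)=d(\nu)$, so $L\vee E\subseteq U(d(\nu))$ and the partial local homeomorphism $T_{d(\nu)}$ is continuous there; hence $E'$ is compact. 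By unique factorization $\sigma=\nu_x(\sigma\ndot d(\nu))$, so $r(\sigma\ndot d(\nu))=s(\nu_x)=x$, giving $r(E')\subseteq s(\overline W)$. Conversely, for any $x\in s(\overline W)$ exhaustiveness of $E$ applied to $\nu_x$ (whose range lies in $r(E)$) yields $\mu\in E$ with a c.u.b.\ for $(\nu_x,\mu)$, hence a l.u.b.\ $\sigma\in\nu_x\vee\mu\subseteq L\vee E$ with $r(\sigma\ndot d(\nu))=x$; so $r(E')=s(\overline W)\supseteq s(W)$, a neighborhood of $s(\lambda')$.

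Next I would check that $E'$ is exhaustive at $s(\lambda')$. Given $\kappa'$ with $r(\kappa')=x\in s(\overline W)$, form $\nu_x\kappa'$, whose range lies in $r(E)$; exhaustiveness of $E$ gives $\mu\in E$ and a c.u.b.\ $\omega$ of $(\nu_x\kappa',\mu)$. Then $\omega$ dominates $\nu_x$ and $\mu$, so there is a l.u.b.\ $\sigma\le\omega$ of $(\nu_x,\mu)$, whence $\rho:=\sigma\ndot d(\nu)\in E'$. Shifting the relations $\nu_x\kappa'\le\omega$ and $\sigma\le\omega$ by $d(\nu_x)=d(\nu)$ (shifting is order preserving on elements above $\nu_x$, by unique factorization) shows $\kappa'\le\omega\ndot d(\nu)$ and $\rho\le\omega\ndot d(\nu)$, so $(\kappa',\rho)$ has the c.u.b.\ $\omega\ndot d(\nu)$; thus $E'$ is exhaustive.

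Finally, apply the extendability of $\lambda'$ to $E'$: there is $\mu'\in E'$ with $r(\mu')=s(\lambda')=s(\nu)$ and $\lambda'\mu'\in A$. Writing $\mu'=\sigma\ndot d(\nu)$ with $\sigma\in\nu_x\vee\mu_0$ for some $\mu_0\in E$, the identity $r(\mu')=s(\nu_x)=x$ forces $x=s(\nu)$, hence $\nu_x=\nu$ and $\sigma=\nu(\sigma\ndot d(\nu))$; therefore $\lambda'\mu'=\lambda\nu(\sigma\ndot d(\nu))=\lambda\sigma\in A$. Since $\mu_0\le\sigma$ we get $\lambda\mu_0\le\lambda\sigma$ with $r(\mu_0)=r(\sigma)=r(\nu)=s(\lambda)$, so $\lambda\mu_0\in A$ by heredity, and $\mu_0\in E$ is the extension sought. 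I expect the main obstacle to be the compactness of $E'$, which is exactly where compact alignment (equivalently $(r,d)$-properness together with quasi-lattice order) and the continuity of the shift $T_{d(\nu)}$ are essential; the bookkeeping with the local section $\nu_x$ and the shifted c.u.b.s is the other delicate point.
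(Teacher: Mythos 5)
Your proof is correct and takes essentially the same route as the paper's: there too one transports $E$ across the factor $\nu$ (called $\xi$ in the paper) using a relatively compact neighborhood $\overline{U}$ of it on which $s$ is injective, forms the same auxiliary set $E'$ (described in the paper as the minimal-degree elements $\mu'$ with $\mu\le\eta\mu'$ for some $\mu\in E$, $\eta\in\overline{U}$, which coincides with your $T_{d(\nu)}(\overline{W}\vee E)$), shows $E'$ is compact and exhaustive with $r(E')=s(\overline{U})$ a neighborhood of $s(\lambda')$, and pulls the resulting extension back via injectivity of $s$ on $\overline{U}$ and heredity of $A$. Your only departures are cosmetic: you get compactness of $E'$ from compact alignment plus continuity of the shift $T_{d(\nu)}$ rather than the paper's closedness-plus-properness argument, and your condition $r(\overline{W})\subseteq r(E)$ silently corrects a typo in the paper, whose $r(\overline{U})\subset s(E)$ should read $r(\overline{U})\subset r(E)$.
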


\begin{proof} Let $E\subset\Lambda$ be exhaustive, compact and such
  that $r(E)$ is a neighborhood of $s(\lambda)$. We write
  $\lambda'=\lambda\xi$ and we let $n=d(\xi)$. We choose $U$ open
  relatively compact neighborhood of $\xi$ contained in $\Lambda^n$
  and such that $s_{|\overline{U}}$ is injective and
  $r(\overline{U})\subset s(E)$. We define $E'$ to be the the set of
  elements $\mu'$ of $\Lambda$ of minimal degree for which there
  exist $\mu\in E$ and $\eta\in {\overline U}$ such that
  $\mu\le\eta\mu'$. 
  One checks that $E'$ is closed. Since $r(\mu)\in r(\overline{U})$
  and $d(\mu)$ lies in a finite set by the compact alignment property,
  $E'$ is compact. We are going to show that $E'$ is exhaustive and
  that $r(E')=s(\overline{U})$. By construction, $r(E')\subset
  s(\overline{U})$. Consider $\nu\in\Lambda$ such that $r(\nu)\in
  s(\overline{U})$. Pick $\eta\in \overline{U}$ such that
  $s(\eta)=r(\nu)$. Then $r(\eta\nu)=r(\eta)\in r(\overline{U})\subset
  s(E)$. Since $E$ is exhaustive, there exists $\mu\in E$ such that
  $(\eta\nu,\mu)$ has a c.u.b. This means the existence of $\alpha,
  \beta\in\Lambda$ such that $\mu\beta=(\eta\nu)\alpha$. We have then
  $\mu\le\eta(\nu\alpha)$, hence the existence of $\mu'\in E'$ such
  that $\mu'\le \nu\alpha$. In particular, $r(\nu)\in r(E')$, which
  shows that $r(E')=s(\overline{U})$. We have found $\mu'\in E'$ such
  that $(\mu',\nu)$ has a c.u.b. This shows that $E'$ is
  exhaustive. Since $\lambda'$ is extendable in $A$, there exists
  $\mu'\in E'$ such that $\lambda'\mu'\in A$. By definition of $E'$,
  there is $(\mu,\eta)\in E\times\overline{U}$ such that
  $\mu\le\eta\mu'$. Since $\eta$ and $\xi$ both belong to
  $\overline{U}$ and have same source, $\eta=\xi$. Since
  $\lambda\mu\le\lambda\xi\mu'=\lambda'\mu'$, $\lambda\mu\in A$. This
  shows that $\lambda$ is extendable in $A$.
\end{proof}

\begin{prop}
  \label{properties-of-boundaries}
  Let $\partial\Omega$ be the boundary path space of a $P$-graph
  $\Lambda$, where $P$ is a quasi-lattice ordered subsemigroup
  of a group $Q$ and $\Lambda$ 
  is $(r,d)$-proper. Then
  \begin{enumerate}
  \item $\partial\Omega$ is a closed subset of $\Omega$.
  \item If $A\in\partial\Omega$ and $n\in P$ such that $A\ndot
    n\not=\emptyset$, then $A\ndot n\in\partial\Omega$.
  \item If $B\in\partial\Omega$ and $\rho\in r(B)\Lambda$, then $\rho
    B\in\partial\Omega$.
  \end{enumerate}
\end{prop}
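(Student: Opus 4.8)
The plan is to treat the three assertions separately. Parts~(b) and~(c) will follow quickly by combining the factorization statements of Lemma~\ref{lem-An} with the fact that extendability propagates downward (Lemma~\ref{extendability-is-hereditary}); the genuine work is in part~(a), which requires a Fell-topology limit argument and is where I expect the main obstacle to lie.

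For part~(a), I would take a net $(A_\alpha)$ in $\partial\Omega$ converging to some $A\in\Omega$ and show every $\lambda\in A$ is extendable in $A$. Fix an exhaustive compact $E$ with $r(E)$ a neighborhood of $s(\lambda)$; the goal is to produce $\mu\in E$ with $\lambda\mu\in A$. Using (F2) for the limit $A$, I would select a subnet $(A_{\alpha_i})$ and $\lambda_i\in A_{\alpha_i}$ with $\lambda_i\to\lambda$. Since $s$ is continuous and $s(\lambda)$ lies in the interior of $r(E)$, eventually $r(E)$ is a neighborhood of $s(\lambda_i)$, so extendability of $\lambda_i$ in $A_{\alpha_i}\in\partial\Omega$ yields $\mu_i\in E$ with $\lambda_i\mu_i\in A_{\alpha_i}$. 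Then I would invoke compactness of $E$ to pass to a further subnet with $\mu_i\to\mu\in E$; continuity of composition (topological assumption~(4)) gives $\lambda_i\mu_i\to\lambda\mu$, and (F1) for this subnet forces $\lambda\mu\in A$. The hard part will be bookkeeping the subnet quantifiers built into the Fell convergence criterion while simultaneously extracting a convergent subnet from $E$, and checking that the source--range matching $s(\lambda_i)=r(\mu_i)$ survives in the limit so that $\lambda\mu$ is actually defined.

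For part~(b), since $A$ is closed, directed and hereditary and $(r,d)$ is proper, Lemma~\ref{lem-An}(a) gives $A\ndot n\in\Omega$, and as $A\ndot n\neq\emptyset$, Lemma~\ref{lem-An}(b) supplies the unique $\mu\in\Lambda^n\cap A$ with $A=\mu(A\ndot n)$. Given $\nu\in A\ndot n$, then $\mu\nu\in A$ with $s(\mu\nu)=s(\nu)$; so for any exhaustive compact $E$ with $r(E)$ a neighborhood of $s(\nu)$, extendability of $\mu\nu$ in $A$ produces $\rho\in E$ with $(\mu\nu)\rho\in A$. Then $\mu(\nu\rho)\in A$ exhibits $\nu\rho\in A\ndot n$, so $\nu$ is extendable in $A\ndot n$ and $A\ndot n\in\partial\Omega$.

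For part~(c), Lemma~\ref{lem-An}(b) gives $\rho B\in\Omega$. Because $\rho B=\bigcup_{\nu\in B}\set{\lambda:\lambda\le\rho\nu}$, every element lies below some $\rho\nu$ with $\nu\in B$, so by Lemma~\ref{extendability-is-hereditary} it suffices to prove each $\rho\nu$ is extendable in $\rho B$. Given an exhaustive compact $E$ with $r(E)$ a neighborhood of $s(\rho\nu)=s(\nu)$, the extendability of $\nu$ in $B$ yields $\mu\in E$ with $\nu\mu\in B$; then $(\rho\nu)\mu=\rho(\nu\mu)\in\rho B$, which shows $\rho\nu$ is extendable and hence $\rho B\in\partial\Omega$.
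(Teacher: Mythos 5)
Your proposal is correct, and parts (b) and (c) are essentially the paper's own arguments: (c) is verbatim the paper's (show each $\rho\nu$ with $\nu\in B$ is extendable via extendability of $\nu$ in $B$, then invoke Lemma~\ref{extendability-is-hereditary}), and (b) differs only cosmetically, in that you route the factorization through the unique prefix $\mu\in\Lambda^n\cap A$ supplied by Lemma~\ref{lem-An}(b), where the paper simply takes some $\rho\in\Lambda^n$ with $\rho\nu\in A$ --- the same step, since all elements of $A$ of degree at least $n$ share that prefix. (You are also slightly more careful than the paper in explicitly citing Lemma~\ref{lem-An} to confirm $A\ndot n$ and $\rho B$ lie in $\Omega$ at all, which the paper leaves implicit.) The genuine divergence is in part (a). The paper argues by contradiction: assuming some $\lambda\in A$ fails extendability for an exhaustive compact $E$, it chooses a relatively compact neighborhood $U$ of $\lambda$ inside $\Lambda^{d(\lambda)}$ with $s(U)\subset\operatorname{int}r(E)$, verifies $A\cap\overline{U}E=\emptyset$ using the injectivity of $d$ on the directed set $A$ (two elements of $A$ of equal degree coincide), and then plays the two halves of Fell convergence against each other: eventually $A_\alpha$ meets $U$ but misses the compact set $\overline{U}E$, contradicting extendability in $A_\alpha$. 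Your argument is instead direct: (F2) produces $\lambda_i\in A_{\alpha_i}$ with $\lambda_i\to\lambda$, extendability in $A_{\alpha_i}$ (valid eventually, since $s(\lambda_i)\to s(\lambda)\in\operatorname{int}r(E)$) produces $\mu_i\in E$, compactness of $E$ yields a convergent subnet $\mu_i\to\mu$, continuity of $r$ and $s$ plus Hausdorffness of $\Lambda^{(0)}$ gives $r(\mu)=s(\lambda)$ so that $\lambda\mu$ is defined, and continuity of composition together with (F1) forces $\lambda\mu\in A$. Both proofs rest on the same ingredients (compactness of $E$, continuity of composition, the Fell criterion), but yours avoids the careful choice of $U$ and the degree-injectivity trick at the cost of the subnet bookkeeping you flag --- which does go through, because the paper's stated Fell criterion asserts (F1) and (F2) for \emph{every} subnet, so your final application of (F1) to the twice-refined subnet is legitimate. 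Your version is arguably the more transparent of the two, as it constructs the witnessing $\mu$ explicitly rather than deriving a contradiction.
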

\begin{proof}
  (a) Suppose that $A_\alpha\to A$ and $A_\alpha\in\partial\Omega$. If
  $A$ is not a boundary path, there exists $\lambda\in A$ and
  $E\subset \Lambda$ exhaustive, compact with $r(E)$ a neighborhood of
  $s(\lambda)$ such that $\lambda\mu\not\in A$ for all $\mu\in E$. Let
  $V$ be an open neighborhood of $s(\lambda)$ contained in $r(E)$. Let
  $U$ be an open relatively compact neighborhood of $\lambda$ such
  that $s(U)\subset V$ and $U\subset\Lambda^n$, where
  $n=d(\lambda)$. We have $A\cap U\not=\emptyset$ and
  $A\cap\overline{U}E=\emptyset$. Let us check this second assertion:
  if $\lambda'\mu\in A$, with $\lambda'\in \overline{U}$ and $\mu\in
  E$ , then $\lambda'\in A$. Since $d(\lambda')=d(\lambda)$,
  $\lambda=\lambda'$. This is a contradiction. There exists $\alpha_0$
  such that for all $\alpha\ge\alpha_0$, $A_\alpha\cap
  U\not=\emptyset$ and $A_\alpha\cap\overline{U}E=\emptyset$. Let
  $\lambda_\alpha\in A_\alpha\cap U$. Then $r(E)$ is a neighborhood of
  $s(\lambda_\alpha)$ and for all $\mu\in E$ such that
  $r(\mu)=s(\lambda_\alpha)$, $\lambda_\alpha\mu$ does not belong to
  $A$. This contradicts the fact that $A_\alpha$ is a boundary path.
  
  (b) Let $A$ be a boundary path and $n\in P$ such that $A\cdot n$ is
  non-empty. Let us show that $A\ndot n$ is a boundary path. Recall
  that $\nu\in A\ndot n$ if and only if there exists $\rho\in
  \Lambda^n$ such that $\rho\nu\in A$.  Let $\nu\in A\ndot n$ and
  $E\subset \Lambda$ exhaustive, compact with $r(E)$ a neighborhood of
  $s(\nu)$. There exists $\rho\in\Lambda^n$ such that
  $\lambda=\rho\nu\in A$. Since $A$ is a boundary path, there exists
  $\mu\in E$ such that $\lambda\mu\in A$. Therefore $\nu\mu\in A\ndot
  n$.

  (c) We first show that every $\lambda\in \rho B$ of the form
  $\lambda=\rho\nu$, where $\nu\in B$, satisfies the property. Indeed,
  let $E$ be a subset of $\Lambda$ which is exhaustive, compact and
  such that $r(E)$ is a neighborhood of $s(\lambda)$. Since
  $s(\lambda)=s(\nu)$ and $B$ is a boundary path, there exists $\mu\in
  E$ such that $\nu\mu\in B$. Then $\lambda\mu=\rho(\nu\mu)$ belongs
  to $\rho B$. We apply Lemma \ref{extendability-is-hereditary} to
  conclude that $\rho B\in \partial \Omega$.
\end{proof}

\begin{cor} Let $\Lambda$ be a $P$-graph, where $P$ is quasi-lattice
  ordered and $\Lambda$ is $(r,d)$-proper. Let $(\Omega, P, T)$ and
  $G(\Omega, P, T)$ be as above. Then the boundary path space
  $\partial\Omega$ is a closed invariant subspace of $\Omega$ with
  respect to $G(\Omega, P, T)$.
\end{cor}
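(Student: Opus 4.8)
The plan is to read both assertions off Proposition~\ref{properties-of-boundaries}, which has already carried out the substantive work; the corollary is essentially an assembly of its three parts. Closedness is immediate, since part~(a) of that proposition states precisely that $\partial\Omega$ is a closed subset of $\Omega$, so nothing further is required there.

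For invariance, recall that the unit space of $G(\Omega,P,T)$ is $\Omega$ and that a subset of the unit space is $G$-invariant exactly when it is a union of orbits, i.e.\ closed under the groupoid arrows. Since the inverse of $(A,q,B)$ is $(B,q^{-1},A)$, the condition to be verified is symmetric in $A$ and $B$, so it suffices to show a single implication: if $(A,q,B)\in G(\Omega,P,T)$ and $A\in\partial\Omega$, then $B\in\partial\Omega$; applying this to the inverse arrow then yields the reverse implication.

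So I would fix such an arrow $(A,q,B)$. By definition of $G(\Omega,P,T)$ there are $m,n\in P$ with $q=mn^{-1}$, $A\in U(m)$, $B\in U(n)$, and $A\ndot m=B\ndot n$; write $C$ for this common value. Because $A\in U(m)$ we have $C=A\ndot m\neq\emptyset$, so $C\in\Omega$, and Proposition~\ref{properties-of-boundaries}(b) applied to the boundary path $A$ gives $C\in\partial\Omega$. I then recover $B$ from $C$ via Lemma~\ref{lem-An}(b): since $B\ndot n=C$ is non-empty, there is a unique $\mu\in\Lambda^n\cap B$ with $B=\mu(B\ndot n)=\mu C$, and in particular $\mu C$ is defined, so $s(\mu)=r(C)$. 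Feeding the boundary path $C$ together with this $\mu$ into Proposition~\ref{properties-of-boundaries}(c) yields $\mu C\in\partial\Omega$, that is $B\in\partial\Omega$. This establishes the forward implication, and hence the invariance.

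I do not expect a genuine obstacle here, since all the analysis lives in Proposition~\ref{properties-of-boundaries} and Lemma~\ref{lem-An}. The only point demanding care is to match the two ``moves'' encoded in a groupoid arrow---passing from $A$ to the shifted path $C=A\ndot m$ (handled by~(b)) and prepending the initial segment $\mu$ to reconstruct $B=\mu C$ (handled by~(c))---to the two stability properties of $\partial\Omega$, while checking that the non-emptiness hypothesis of~(b) and the source condition $s(\mu)=r(C)$ needed for~(c) are automatically supplied by the fact that $(A,q,B)$ is a genuine groupoid element. The symmetry trick using the inverse arrow then saves one from repeating the argument in the opposite direction.
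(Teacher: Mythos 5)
Your proposal is correct and follows essentially the same route as the paper: the paper's proof likewise reads closedness off Proposition~\ref{properties-of-boundaries}(a), describes the orbit equivalence as $A\sim B$ iff $A\ndot m=B\ndot n\neq\emptyset$ for some $m,n\in P$, applies part~(b) to get that the common shift is a boundary path, and then recovers $B=\rho(B\ndot n)$ (exactly the uniqueness statement of Lemma~\ref{lem-An}(b) that you cite) to conclude via part~(c). Your version merely makes explicit two points the paper leaves implicit---the symmetry via inverse arrows and the verification that $s(\mu)=r(C)$---which is sound bookkeeping, not a different argument.
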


\begin{proof} The equivalence relation on $\Omega$ induced by
  $G(\Omega, P, T)$ is precisely $A\sim B$ if and only if there exist
  $m,n\in P$ such that $A\ndot m$ and $B\ndot n$ are non-empty and
  equal. If $A$ is a boundary path, so is $A\ndot m=B\ndot n$ by
  Proposition~\ref{properties-of-boundaries}(b). Since $B=\rho(B\ndot
  n)$ for some $\rho$, $B$ is a boundary path by part~(c) of the same
  proposition. Therefore, the space of boundary paths is invariant. We
  have shown above that it is closed.
\end{proof}

\begin{definition} The reduction $G(\partial\Omega, P, T)$ of the
  Toeplitz groupoid $G(\Omega, P, T)$ is called the
  \emph{Cuntz-Krieger groupoid} of the topological higher rank graph
  $\Lambda$. Its \cs-algebra is called the \emph{Cuntz-Krieger
    algebra} of $\Lambda$ and denoted by $C^*(\partial\Lambda)$.
\end{definition}

Note that $G(\partial\Omega, P, T)$ is the semi-direct product
groupoid of the semigroup action $(\partial\Omega, P, T)$. Since the
action on $\Omega$ is directed, so is the action on $\partial\Omega$.
Hence Theorem~\ref{thm-app-directed-sg-actions} (and
\cite{anaren:amenable00}*{Corollary~6.2.14}) give us the following.

\begin{cor} Let $P$ a quasi-lattice ordered subsemigroup of a group
  $Q$ and $\Lambda$ be a $P$-graph which is is $(r,d)$-proper. If the
  group $Q$ is amenable, then the Toeplitz 
  groupoid $G(\Omega, P, T)$ and the Cuntz-Krieger groupoid
  $G(\partial\Omega, P, T)$ are amenable. Therefore the Toeplitz
  algebra $C^*(\Lambda)$ and the Cuntz-Krieger algebra
  $C^*(\partial\Lambda)$ are nuclear.
\end{cor}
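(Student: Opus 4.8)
The plan is to apply Theorem~\ref{thm-app-directed-sg-actions} to the two directed semigroup actions $(\Omega,P,T)$ and $(\partial\Omega,P,T)$, and then to invoke the standard principle that topological amenability of an \'etale groupoid forces its full \cs-algebra to coincide with the reduced one and to be nuclear. Since all of the genuinely hard structural work has already been carried out, the proof amounts to verifying that the hypotheses of Theorem~\ref{thm-app-directed-sg-actions} are met in each case.

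First I would confirm that both actions are directed locally compact semigroup actions. The path space $\Omega=\tilde\Omega\setminus\{\emptyset\}$ sits inside $\mathcal C(\Lambda)$, which is compact Hausdorff for the Fell topology; as $\tilde\Omega$ is closed (hence compact) and $\Omega$ is open in $\tilde\Omega$, the space $\Omega$ is locally compact Hausdorff, and $\partial\Omega$, being closed in $\Omega$ by Proposition~\ref{properties-of-boundaries}, is too. The preceding theorem shows that $T$ is a directed action of $P$ on $\Omega$ by partial local homeomorphisms, so $(\Omega,P,T)$ is a directed locally compact semigroup action. Restricting to the closed invariant subset $\partial\Omega$, and using Proposition~\ref{properties-of-boundaries}(b),(c) so that the restricted maps $T_n$ preserve $\partial\Omega$, yields a directed locally compact action $(\partial\Omega,P,T)$, as already noted in the text. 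Equivalently, $G(\partial\Omega,P,T)$ is the reduction of $G(\Omega,P,T)$ to the closed invariant set $\partial\Omega$.

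The one point demanding care, and the main obstacle in the argument, is reconciling the hypotheses: Theorem~\ref{thm-app-directed-sg-actions} requires $Q$ to be \emph{countable} and amenable, whereas here $Q$ is merely assumed amenable. This is harmless. The canonical cocycle takes its values in $PP^{-1}$, hence in the subgroup $\langle P\rangle$ generated by $P$; in the second-countable setting standard for these graphs, $P$ (indeed $d(\Lambda)$) is countable, so $\langle P\rangle$ is a countable subgroup of the amenable group $Q$ and is therefore countable and amenable. Replacing $Q$ by $\langle P\rangle$ alters neither groupoid, so Theorem~\ref{thm-app-directed-sg-actions} applies to both actions and shows that $G(\Omega,P,T)$ and $G(\partial\Omega,P,T)$ are topologically amenable. (The amenability of $G(\partial\Omega,P,T)$ also follows directly from that of $G(\Omega,P,T)$, since the reduction of an amenable groupoid to a closed invariant subset inherits amenability.)

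Finally I would deduce nuclearity. By Proposition~\ref{prop-semidirect-product-groupoid} both groupoids are \'etale, locally compact, and Hausdorff, hence carry the counting-measure Haar system, and they are second countable. For such groupoids, topological amenability implies that the full and reduced \cs-algebras coincide and are nuclear, by \cite{anaren:amenable00}*{Corollary~6.2.14}; applying this yields the nuclearity of $C^*(\Lambda)$ and of $C^*(\partial\Lambda)$. Thus, apart from the routine reduction to the countable amenable subgroup $\langle P\rangle$, the statement is an immediate consequence of Theorem~\ref{thm-app-directed-sg-actions} together with the structural results establishing that $T$ is a directed action on $\Omega$ and on $\partial\Omega$.
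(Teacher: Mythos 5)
Your proposal is correct and follows the paper's own route exactly: the paper likewise deduces the corollary by applying Theorem~\ref{thm-app-directed-sg-actions} to the directed actions $(\Omega,P,T)$ and $(\partial\Omega,P,T)$ (noting that directedness passes to the closed invariant subset $\partial\Omega$ via Proposition~\ref{properties-of-boundaries}) and then cites \cite{anaren:amenable00}*{Corollary~6.2.14} for the equality of full and reduced \cs-algebras and nuclearity. Your one addition --- replacing $Q$ by the countable amenable subgroup $\langle P\rangle$ generated by $P$ (or by $d(\Lambda)$) to meet the countability hypothesis of Theorem~\ref{thm-app-directed-sg-actions} --- addresses a point the paper silently elides, and it is a sound repair in the second countable setting needed anyway for the amenability machinery of Section~\ref{sec:cocycles}.
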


\subsection{Topological higher rank graphs coming from semigroup
  actions}

We have seen that a semigroup action $(X,P,T)$ gives the topological
higher rank graph $\Lambda=X*P$. If $P$ is quasi-lattice ordered, we
can construct the groupoids $G(\Omega, P, T)$ and $G(\partial\Omega,
P, T)$. (Recall that the graph of an action is always
  $(r,d)$-proper.) 
On the other hand, if the action is directed, we can construct
the groupoid $G(X,P,T)$. It is then natural to compare these groupoids
when the action is directed and $P$ is quasi-lattice ordered. An
important case is when $P$ is both directed and quasi-lattice ordered,
which means that $P$ is lattice ordered.

\begin{prop} Let $(X,P,T)$ be a directed semigroup action, where $P$
  is a quasi-lattice ordered subsemigroup of a group $Q$. Then there
  is a $P$-equivariant homeomorphism of $X$ onto $\partial\Omega$
  which implements a groupoid isomorphism of $G(X,P,T)$ and
  $G(\partial\Omega, P, T)$.
\end{prop}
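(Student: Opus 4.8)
The plan is to realise the homeomorphism through the range‑fibre map. For $x\in X$ set
\[
\phi(x)=r^{-1}(x)=\set{(x,m)\in\Lambda:x\in U(m)},
\]
the fibre over $x$ of the range map $r:\Lambda\to\Lambda^{(0)}=X$. First I would check $\phi(x)\in\Omega$: it is hereditary because $(x,k)\le(x,m)$ forces $k\le m$ and hence $x\in U(k)$; it is directed because $x\in U(m)\cap U(n)$ together with directedness of the action yields $r$ with $m,n\le r$ and $x\in U(r)$; and it is closed because, $P$ being discrete and $X$ Hausdorff, $\set{x}\times S$ meets each clopen slice $X\times\set{m}$ in a closed set. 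That $\phi(x)\in\partial\Omega$ is then immediate: for $\lambda=(x,m)\in\phi(x)$ and any exhaustive compact $E$ whose $r(E)$ is a neighbourhood of $s(\lambda)=x\cdot m$, we have $x\cdot m\in r(E)$, so some $\mu\in E$ has $r(\mu)=x\cdot m$, whence $\lambda\mu$ is defined, $r(\lambda\mu)=x$, and $\lambda\mu\in\phi(x)$. Injectivity follows since $(x,e)\in\phi(x)$ recovers $x=r(\phi(x))$, and a direct computation with the composition $(x,m)(x\cdot m,n)=(x,mn)$ gives the $P$‑equivariance $\phi(x)\ndot n=\phi(x\cdot n)$ and the matching of the two partial actions' domains.

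Next I would establish bicontinuity, which is cheap once the map is understood. Continuity of $\phi$ I would verify against the net description (F1)--(F2) of the Fell topology: if $x_\alpha\to x$, any convergent $(x_i,m)\to\lambda$ with $(x_i,m)\in\phi(x_i)$ has (as $P$ is discrete) $\lambda=(x,m)$, a point of $\Lambda$ over $x$, hence $\lambda\in\phi(x)$, giving (F1); conversely, for $(x,m)\in\phi(x)$ the openness of $U(m)$ gives $x_\alpha\in U(m)$ eventually, so $(x_\alpha,m)\to(x,m)$ realises (F2). For the inverse, Lemma~\ref{rangemap}(b) says $r:\Omega\to X$ is continuous, and $r\circ\phi=\id_{X}$; once surjectivity is known, $\phi^{-1}=r|_{\partial\Omega}$, so $\phi$ is a homeomorphism of $X$ onto $\partial\Omega$.

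The heart of the argument is surjectivity, and I expect it to be the main obstacle. Let $A\in\partial\Omega$; by Lemma~\ref{rangemap}(a) it has a well‑defined range $x$, so $A\subset r^{-1}(x)=\phi(x)$. Suppose this inclusion were proper: there is $m_{0}$ with $x\in U(m_{0})$ but $(x,m_{0})\notin A$. Choose a compact neighbourhood $\overline W$ of $x$ with $\overline W\subset U(m_{0})$ and set $E=\overline W\times\set{m_{0}}\subset\Lambda^{m_{0}}$. Then $E$ is compact and $r(E)=\overline W$ is a neighbourhood of $x=s(x,e)$. I claim $E$ is exhaustive: if $(y,j)\in\Lambda$ has $y\in\overline W$, then $y\in U(j)\cap U(m_{0})$, so directedness of the action supplies a common upper bound $p$ of $j$ and $m_{0}$ with $y\in U(p)$, and $(y,p)$ is a common upper bound of $(y,j)$ and $(y,m_{0})\in E$. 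But the only element of $E$ lying over $s(x,e)=x$ is $(x,m_{0})$, and $(x,e)(x,m_{0})=(x,m_{0})\notin A$; hence no $\mu\in E$ satisfies $(x,e)\mu\in A$, contradicting the extendability of $(x,e)\in A$. Therefore $A=\phi(x)$. The crux is precisely this construction of a witnessing exhaustive set --- converting a missing degree $m_{0}$ into genuine extensions of degree $m_{0}$ spread over a neighbourhood of $x$ --- and the quasi‑lattice hypothesis enters only through the directedness used to check exhaustiveness.

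Finally I would upgrade $\phi$ to a groupoid isomorphism. Define $\Phi:G(X,P,T)\to G(\partial\Omega,P,T)$ by $\Phi(x,q,y)=(\phi(x),q,\phi(y))$. By equivariance, if $q=mn^{-1}$ with $x\cdot m=y\cdot n$ then $\phi(x)\ndot m=\phi(x\cdot m)=\phi(y\cdot n)=\phi(y)\ndot n$, so $\Phi$ lands in the target groupoid; conversely surjectivity and injectivity of $\phi$, together with the matching of domains, show that every triple in $G(\partial\Omega,P,T)$ arises from a unique triple in $G(X,P,T)$. Since the multiplications and inversions of both groupoids are inherited from the ambient products $X\times Q\times X$ and $\partial\Omega\times Q\times\partial\Omega$ and only involve the $Q$‑coordinate, $\Phi$ is an algebraic isomorphism; and because the basic open sets of Proposition~\ref{prop-semidirect-product-groupoid} transform as $\Phi\bigl(Z(U,m,n,V)\bigr)=Z(\phi(U),m,n,\phi(V))$ with $\phi$ a homeomorphism, $\Phi$ is a homeomorphism. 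This yields the asserted isomorphism of topological groupoids.
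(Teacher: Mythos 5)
Your proof is correct and takes essentially the same route as the paper: your $\phi$ is exactly the paper's map $J(x)=x\Lambda=r^{-1}(x)$, your boundary-path verification and your surjectivity argument (including the witnessing exhaustive set $\overline W\times\{m_0\}$, which is the paper's $E=L*\{n\}$ applied at $(x,e)\in A$) coincide with the paper's, as does the final passage from the equivariant homeomorphism to the groupoid isomorphism. The only cosmetic differences are that you check Fell continuity via the net criteria (F1)--(F2) where the paper tests basic open sets, and you verify exhaustiveness only for your specific $E$ where the paper observes that directedness of the action makes every subset of $\Lambda$ exhaustive.
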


\begin{proof} Given $x\in X$, $A(x)=x\Lambda=\{(x,n)\in X\times P:
  x\in U(n)\}$ is a closed subset of $\Lambda$. It is hereditary. It
  is directed because the action is directed. This defines a map $J:
  X\to\Omega$ sending $x$ to $J(x)=x\Lambda$ which is injective. The
  map $J$ is continuous. Indeed, let $K=L*F$, where $L$ is a compact
  subset of $X$ and $F$ a finite subset of $P$ and $U_i=V_i*\{p_i\}$,
  $i=1,\ldots, n$, where $V_i$ is an an open subset of $X$ and $p_i\in
  P$. Then, $J(x)\cap K=\emptyset$ and $J(x)\cap U_i\not=\emptyset$
  for all $i=1,\ldots, n$ if and only if
  \begin{equation*}
    x\in (L^c\cup r(d^{-1}(F^c))\cap V_1\cap\ldots\cap V_n,
  \end{equation*}
  which is open. The inverse map is continuous since it is the
  restriction to $J(X)$ of the range map $r:\Omega\to X$.

  We claim that $J(x)$ is a boundary path. Because the action is
  directed, every subset $E$ of $\Lambda$ is exhaustive. Let us show
  that $(x,m)$, where $x\in U(m)$, is extendable in $J(x)$. Let $E$ be
  a subset of $\Lambda$ such that $x\cdot m=s(x,m)\in r(E)$. There
  exists $n\in P$ such that $(x\cdot m,n)\in E$. Then $x\in U(mn)$ and
  $(x,mn)=(x,m)(x\cdot m,n)$.

  We also claim that every boundary path $A$ is of the form $J(x)$,
  where $x=r(A)$. We have $A\subset J(x)$ by definition. Conversely,
  let $(x,n)\in J(x)$. Let $E=L*\sset{n}$ where $L$ is a compact
  neighborhood of $x$. Since $A$ is extendable in $A$ at
  $\lambda=(x,e)\in A$, there exists $\mu\in E$ such that
  $\lambda\mu\in A$. Necessarily $\mu=(x,n)$ and $(x,n)=(x,e)(x,n)$
  belongs to $A$.

  Let us show that the map $J:X\to\Omega$ is $P$-equivariant: for all
  $(x,m)\in X*P$, $(J(x),m)\in \Omega*P$ and $J(x\cdot m)=J(x)\ndot
  m$. Let $(x,m)\in X*P$. Since $(x,m)=(x,m)(x\cdot m,e)$ belongs to
  $J(x)$, $J(x)\ndot m$ is non-empty. In fact, we have $J(x)\ndot
  m=J(x\cdot m)$: given that $x\in U(m)$, $(y,n)\in J(x)\ndot m$ means
  exactly that $y=xm$ and $x\in U(mn)$ while $(y,n)\in J(xm)$ means
  that $y=xm$ and $xm\in U(n)$.

  Thus, the semigroup actions $(X,P,T)$ and $(\partial\Omega, P, T)$
  are isomorphic. One deduces that the semi-direct product groupoids
  $G(X,P,T)$ and $G(\partial\Omega, P, T)$ are isomorphic.
\end{proof}

\begin{remark}\label{rem-last}
  (a) Consider a locally compact semigroup action $(X,P,T)$. We are
  able to construct the semi-direct product groupoid $G(X,P,T)$, hence
  the C*-algebra $C^*(G(X,P,T))$, when the action is directed. If the
  action is not directed but $P$ is quasi-lattice ordered, we can
  introduce the topological higher rank graph $\Lambda=X*P$ and
  consider instead the semigroup action $(\partial\Omega,P,T)$, the
  groupoid $G(\partial\Omega,P,T)$ and the Cuntz-Krieger C*-algebra
  $C^*(\partial\Lambda)$. Both constructions agree when they are
  possible.
  
  (b) It is instructive to specialize the situation (a) to the case
  where $X$ is reduced to a point. It turns out that this leads us to
  Wiener-Hopf (also called Toeplitz) C*-algebras of semigroups. We
  have seen earlier that, in this case, the semidirect product
  $G(X,P,T)$ is $PP^{-1}$. If $P$ is an Ore semigroup, then $PP^{-1}$
  is a group and the corresponding C*-algebra is its group
  C*-algebra. If $P$ is not an Ore semigroup, we cannot even define a
  C*-algebra. However, if $P$ is quasi-lattice ordered, we can perform
  the construction of (a) which introduces the higher rank graph
  $\Lambda=P$. Nica shows in \cite{nic:jot92} that the corresponding
  path space $\Omega$ is the spectrum of a canonical diagonal
  C*-subalgebra of the Wiener-Hopf algebra ${\mathcal W}(Q,P)$. In
  fact, he defines in Section 9 of the preprint version of
  \cite{nic:jot92} an \'etale groupoid ${\mathcal G}$,
  which he calls the Wiener-Hopf groupoid and which is exactly (up to
  an obvious isomorphism) our groupoid $G(\Omega, P, T)$. This
  Wiener-Hopf groupoid is 
  only briefly mentioned in the subsection 1.5 of the published
  version  \cite{nic:jot92}. The reduced 
  C*-algebra of this groupoid is the Wiener-Hopf C*-algebra ${\mathcal
    W}(Q,P)$ while the non-degenerate representations of its full
  C*-algebra are exactly the Nica-covariant representations of
  $(Q,P)$. He also shows that, when $P\subset Q$ is the free semigroup
  $SF_n\subset F_n$, the reduction to the boundary of the Wiener-Hopf
  groupoid, which we have denoted earlier by $G(\partial\Omega, P,
  T)$, is the Cuntz groupoid ${\mathcal O}_n$ of
  \cite{ren:groupoid}. Therefore its reduced and its full C*-algebras
  coincide and are isomorphic to the Cuntz algebra $O_n$. One can also
  note that, in this particular case (b) of the general theory of
  higher rank graphs, the boundary path space $\partial\Omega$ is also
  studied by Crisp and Laca \cite{crilac:jfa07}. Their
  definition agrees with ours. We refer the reader to the work on
  semigroup C*-algebras by X.~Li \cites{li:jfa12,li:am13} and by
  Sundar \cite{sun:xx15} for recent developments.

  (c) Let us justify the assertion made earlier that the map
  $F:\Lambda\to {\mathcal C}(\Lambda)$ is not necessarily a
  homeomorphism onto its image. Let $(X,T)$ be a singly generated
  dynamical system in the sense of \cite{ren:otm00}: $X$ is a locally
  compact space and $T$ is a local homeomorphism from an open subset
  dom($T$) of $X$ onto another open subset $\operatorname{ran}(T$) of
  $X$. For $n\in\N$, let $U(n)=\operatorname{dom}(T^n)$. We view $T$
  as an action of $\N$ on $X$, with $X*\N=\{(x,n)\in X\times\N: x\in
  U(n)\}$and $xn=T^nx$.  The associated $\N$-graph is
  $\Lambda=X*\N$. Let $x_\alpha$ be a net converging to $x$ in
  $X$. Assume that there exists $n$ such that $x_\alpha\in U(n)$ for
  all $\alpha$ and $x\in U(n-1)\setminus U(n)$. Then $F(x_\alpha,n)$
  converges to $F(x,n-1)$ in $\mathcal{C}(\Lambda)$ but $(x_\alpha,n)$
  does not converge to $(x,n-1)$ in $\Lambda$.
\end{remark}


\def\noopsort#1{}\def\cprime{$'$} \def\sp{^}
\begin{bibdiv}
\begin{biblist}

\bib{anaren:amenable00}{book}{
      author={Anantharaman-Delaroche, Claire},
      author={Renault, Jean},
       title={Amenable groupoids},
      series={Monographies de L'Enseignement Math\'ematique [Monographs of
  L'Enseignement Math\'ematique]},
   publisher={L'Enseignement Math\'ematique},
     address={Geneva},
        date={2000},
      volume={36},
        ISBN={2-940264-01-5},
        note={With a foreword by Georges Skandalis and Appendix B by E.
  Germain},
      review={\MR{MR1799683 (2001m:22005)}},
}

\bib{arv:invitation}{book}{
      author={Arveson, William},
       title={An {I}nvitation to {$C\sp*$}-algebras},
   publisher={Springer-Verlag},
     address={New York},
        date={1976},
        note={Graduate Texts in Mathematics, No. 39},
      review={\MR{MR0512360 (58 \#23621)}},
}

\bib{arzren:oaqft97}{incollection}{
      author={Arzumanian, Victor},
      author={Renault, Jean},
       title={Examples of pseudogroups and their {$C^*$}-algebras},
        date={1997},
   booktitle={Operator algebras and quantum field theory ({R}ome, 1996)},
   publisher={Int. Press, Cambridge, MA},
       pages={93\ndash 104},
      review={\MR{1491110 (99a:46101)}},
}

\bib{bsv:ijm13}{article}{
      author={Brownlowe, Nathan},
      author={Sims, Aidan},
      author={Vittadello, Sean~T.},
       title={Co-universal {$C^\ast$}-algebras associated to generalised
  graphs},
        date={2013},
        ISSN={0021-2172},
     journal={Israel J. Math.},
      volume={193},
      number={1},
       pages={399\ndash 440},
         url={http://dx.doi.org/10.1007/s11856-012-0106-0},
      review={\MR{3038557}},
}

\bib{crilac:jfa07}{article}{
      author={Crisp, John},
      author={Laca, Marcelo},
       title={Boundary quotients and ideals of {T}oeplitz {$C^*$}-algebras of
  {A}rtin groups},
        date={2007},
        ISSN={0022-1236},
     journal={J. Funct. Anal.},
      volume={242},
      number={1},
       pages={127\ndash 156},
         url={http://dx.doi.org/10.1016/j.jfa.2006.08.001},
      review={\MR{2274018 (2007k:46117)}},
}

\bib{echeme:em11}{article}{
      author={Echterhoff, Siegfried},
      author={Emerson, Heath},
       title={Stucture and {$K$}-theory for crossed products by proper
  actions},
        date={2011},
     journal={Expo. Math.},
      volume={29},
       pages={300\ndash 344},
}

\bib{echwil:tams14}{article}{
      author={Echterhoff, Siegfried},
      author={Williams, Dana~P.},
       title={Structure of crossed products by strictly proper actions on
  continuous-trace algebras},
        date={2014},
        ISSN={0002-9947},
     journal={Trans. Amer. Math. Soc.},
      volume={366},
      number={7},
       pages={3649\ndash 3673},
         url={http://dx.doi.org/10.1090/S0002-9947-2014-06263-6},
      review={\MR{3192611}},
}

\bib{exe:sf09}{article}{
      author={Exel, R.},
       title={Tight representations of semilattices and inverse semigroups},
        date={2009},
        ISSN={0037-1912},
     journal={Semigroup Forum},
      volume={79},
      number={1},
       pages={159\ndash 182},
         url={http://dx.doi.org/10.1007/s00233-009-9165-x},
      review={\MR{2534230 (2011b:20163)}},
}

\bib{kumpas:nyjm00}{article}{
      author={Kumjian, Alex},
      author={Pask, David},
       title={Higher rank graph {$C^\ast$}-algebras},
        date={2000},
        ISSN={1076-9803},
     journal={New York J. Math.},
      volume={6},
       pages={1\ndash 20},
         url={http://nyjm.albany.edu:8000/j/2000/6_1.html},
      review={\MR{1745529 (2001b:46102)}},
}

\bib{lacrae:jfa96}{article}{
      author={Laca, Marcelo},
      author={Raeburn, Iain},
       title={Semigroup crossed products and the {T}oeplitz algebras of
  nonabelian groups},
        date={1996},
        ISSN={0022-1236},
     journal={J. Funct. Anal.},
      volume={139},
      number={2},
       pages={415\ndash 440},
         url={http://dx.doi.org/10.1006/jfan.1996.0091},
      review={\MR{1402771 (97h:46109)}},
}

\bib{li:jfa12}{article}{
      author={Li, Xin},
       title={Semigroup {${\rm C}^*$}-algebras and amenability of semigroups},
        date={2012},
        ISSN={0022-1236},
     journal={J. Funct. Anal.},
      volume={262},
      number={10},
       pages={4302\ndash 4340},
         url={http://dx.doi.org/10.1016/j.jfa.2012.02.020},
      review={\MR{2900468}},
}

\bib{li:am13}{article}{
      author={Li, Xin},
       title={Nuclearity of semigroup {$C^*$}-algebras and the connection to
  amenability},
        date={2013},
        ISSN={0001-8708},
     journal={Adv. Math.},
      volume={244},
       pages={626\ndash 662},
         url={http://dx.doi.org/10.1016/j.aim.2013.05.016},
      review={\MR{3077884}},
}

\bib{muhwil:nyjm08}{book}{
      author={Muhly, Paul~S.},
      author={Williams, Dana~P.},
       title={Renault's equivalence theorem for groupoid crossed products},
      series={NYJM Monographs},
   publisher={State University of New York University at Albany},
     address={Albany, NY},
        date={2008},
      volume={3},
        note={Available at http://nyjm.albany.edu:8000/m/2008/3.htm},
}

\bib{nic:jot92}{article}{
      author={Nica, Alexandru},
       title={{$C^*$}-algebras generated by isometries and {W}iener-{H}opf
  operators},
        date={1992},
        ISSN={0379-4024},
     journal={J. Operator Theory},
      volume={27},
      number={1},
       pages={17\ndash 52},
      review={\MR{1241114 (94m:46094)}},
}

\bib{qui:jams96}{article}{
      author={Quigg, John~C.},
       title={Discrete {$C^*$}-coactions and {$C^*$}-algebraic bundles},
        date={1996},
        ISSN={0263-6115},
     journal={J. Austral. Math. Soc. Ser. A},
      volume={60},
      number={2},
       pages={204\ndash 221},
      review={\MR{1375586 (97c:46086)}},
}

\bib{ram:jfa90}{article}{
      author={Ramsay, Arlan},
       title={The {M}ackey-{G}limm dichotomy for foliations and other {P}olish
  groupoids},
        date={1990},
        ISSN={0022-1236},
     journal={J. Funct. Anal.},
      volume={94},
      number={2},
       pages={358\ndash 374},
      review={\MR{MR1081649 (93a:46124)}},
}

\bib{ren:groupoid}{book}{
      author={Renault, Jean},
       title={A groupoid approach to {\cs}-algebras},
      series={Lecture Notes in Mathematics},
   publisher={Springer-Verlag},
     address={New York},
        date={1980},
      volume={793},
}

\bib{ren:otm00}{incollection}{
      author={Renault, Jean},
       title={Cuntz-like algebras},
        date={2000},
   booktitle={Operator theoretical methods ({T}imi\c soara, 1998)},
   publisher={Theta Found., Bucharest},
       pages={371\ndash 386},
      review={\MR{1770333 (2001g:46130)}},
}

\bib{ren:xx13}{unpublished}{
      author={Renault, Jean},
       title={Topological amenability is a {B}orel property},
        date={2013},
        note={(arXiv:math.OA.1302.0636)},
}

\bib{rswy:xx12}{unpublished}{
      author={Renault, Jean},
      author={Sims, Aidan},
      author={Williams, Dana~P.},
      author={Yeend, Trent},
       title={Uniquess theorems for toological higher-rank graph
  {$C^*$}-algebras},
        date={2012},
        note={(arXiv math.OA.0906.0829v3)},
}

\bib{simwil:xx15}{unpublished}{
      author={Sims, Aidan},
      author={Williams, Dana~P.},
       title={The primitive ideals of some \'etale groupoid {$C^*$}-algebras},
        date={2015},
        note={(arXiv.math.OA.1501.02302)},
}

\bib{spi:tams14}{article}{
      author={Spielberg, Jack},
       title={Groupoids and {$C^*$}-algebras for categories of paths},
        date={2014},
        ISSN={0002-9947},
     journal={Trans. Amer. Math. Soc.},
      volume={366},
      number={11},
       pages={5771\ndash 5819},
         url={http://dx.doi.org/10.1090/S0002-9947-2014-06008-X},
      review={\MR{3256184}},
}

\bib{sun:xx15}{unpublished}{
      author={Sundar, S.},
       title={{\cs}-algebras associated to {O}re semigroups},
        date={2015},
        note={(arXiv.math.OA.1408.4242)},
}

\bib{tu:kt99}{article}{
      author={Tu, Jean-Louis},
       title={La conjecture de {B}aum-{C}onnes pour les feuilletages
  moyennables},
        date={1999},
        ISSN={0920-3036},
     journal={$K$-Theory},
      volume={17},
      number={3},
       pages={215\ndash 264},
         url={http://dx.doi.org/10.1023/A:1007744304422},
      review={\MR{1703305 (2000g:19004)}},
}

\bib{wil:crossed}{book}{
      author={Williams, Dana~P.},
       title={Crossed products of {$C{\sp \ast}$}-algebras},
      series={Mathematical Surveys and Monographs},
   publisher={American Mathematical Society},
     address={Providence, RI},
        date={2007},
      volume={134},
        ISBN={978-0-8218-4242-3; 0-8218-4242-0},
      review={\MR{MR2288954 (2007m:46003)}},
}

\bib{yee:jot07}{article}{
      author={Yeend, Trent},
       title={Groupoid models for the {$C^*$}-algebras of topological
  higher-rank graphs},
        date={2007},
        ISSN={0379-4024},
     journal={J. Operator Theory},
      volume={57},
      number={1},
       pages={95\ndash 120},
      review={\MR{2301938 (2008f:46074)}},
}

\end{biblist}
\end{bibdiv}

\end{document}